\newcommand{\C}{\mathbb C}
\newcommand{\PP}{\mathbb P}
\newcommand{\Q}{\mathbb Q}
\newcommand{\R}{\mathbb R}
\newcommand{\Z}{\mathbb Z}
\newcommand{\fM}{\mathfrak M}
\newcommand{\sB}{\mathcal B}
\newcommand{\sD}{\mathcal D}
\newcommand{\sF}{\mathcal F}
\newcommand{\sH}{\mathcal H}
\newcommand{\sK}{\mathcal K}
\newcommand{\sM}{\mathcal M}
\newcommand{\sO}{\mathcal O}
\newcommand{\bnd}{{}^{\mathrm{b}}}
\newcommand{\Cr}{{}_{\mathrm{c}}} 
\newcommand{\hor}{^{\mathrm{h}}}
\newcommand{\lcm}{{}_{\mathrm{lcm}}}
\newcommand{\Lg}{^{\mathrm{log}}}
\newcommand{\linsys}[1]{\left|{#1}\right|} 
\newcommand{\mc}{{}_{\mathrm{mlcc}}}
\newcommand{\nor}{^{\mathrm{n}}}
\newcommand{\pt}{\mathrm{pt.}}
\newcommand{\red}{{}_{\mathrm{red}}}
\newcommand{\rest}[1]{{}_{{\textstyle{|}}#1}} 
\newcommand{\sdiv}{{}_{\mathrm{div}}}
\newcommand{\smod}{{}_{\mathrm{mod}}}
\newcommand{\st}{{}_{\mathrm{st}}} 
\newcommand{\tdlt}{{\rm tdlt}}
\newcommand{\tlc}{{\rm tlc}} 
\newcommand{\umm}{{}^{\mathrm{mm}}}
\newcommand{\umod}{^{\mathrm{mod}}}
\newcommand{\1}{^{-1}}
\DeclareMathOperator{\aBir}{Bir_\diamond}
\DeclareMathOperator{\alg}{\mathcal R}
\DeclareMathOperator{\Aut}{Aut}
\DeclareMathOperator{\Bir}{Bir}
\DeclareMathOperator{\bNS}{b-NS}
\DeclareMathOperator{\bPic}{b-Pic}
\DeclareMathOperator{\chr}{char}
\DeclareMathOperator{\Diff}{Diff}
\DeclareMathOperator{\dk}{\iota} 
\DeclareMathOperator{\Gal}{Gal}
\DeclareMathOperator{\GM}{Models}
\DeclareMathOperator{\mBir}{Bir_{\rm mp}}
\DeclareMathOperator{\id}{Id}
\DeclareMathOperator{\Ii}{i}
\DeclareMathOperator{\Lat}{Lattices}
\DeclareMathOperator{\mult}{mult}
\DeclareMathOperator{\NS}{NS}
\DeclareMathOperator{\Pic}{Pic}
\DeclareMathOperator{\prd}{p}
\DeclareMathOperator{\res}{res}
\DeclareMathOperator{\Spec}{Spec}
\DeclareMathOperator{\Supp}{Supp}
\theoremstyle{definition}
 \newtheorem{con}{Conjecture}
 \newtheorem{df}{Defenition}
\theoremstyle{plain}
 \newtheorem{cor}{Corollary}
 \newtheorem{lemma}{Lemma}
 \newtheorem{prop}{Proposition}
 \newtheorem{propdf}{Proposition-Definition}
 \newtheorem{thm}{Theorem}
\theoremstyle{remark}
 \newtheorem{ex}{Example}
 \newtheorem{exs}{Examples}
\title{Log Adjunction: effectiveness and positivity}
\date{May 26th 2006, Moscow/August 22nd 2013, Moscow}
\author{V.V. Shokurov
\thanks{Partially supported
by NSF grants DMS-0400832, DMS-0701465 and DMS-1001427.} }
\begin{document}

\maketitle

\section{Introduction}

This is a first instalment of much larger work
about relations between birational geometry and moduli
of triples.
The extraction of work is mainly related to
Theorem~\ref{weak_Kawamata}.
It is a weak version of Kawamata's Conjecture~\ref{Kawamata_conjecture}
and an important technical step toward
semiampleness of moduli part of adjunction.
To prove Theorem~\ref{weak_Kawamata}, we use
relative analogues of b-representations.
The proof here is rather complete except for
b-mobile property used in Corollary~\ref{adjunt_for_0contr}.
We assume also the LMMP and the semiampleness (abundance) conjecture.
For the former, it is sufficient \cite{BCHM}.
The latter is not crucial for b-representations, because
nonabundace gives empty representations.
This will be cleared up  in a final version of the preprint.

The preprint will be periodically renewed on
http://www.math.jhu.edu/$\sim$shokurov/adj.pdf.
A final version will appear again on arXive.

The author is grateful to
Florin Ambro, Valery Alexeev, J\'anos Koll\'ar for
sharing unpublished materials
and their valuable expertise in the area where
he is still an apprentice.

\section{Adjunction}

\begin{propdf}[Maximal log pair] 
Let $(X_\eta,B_{X_\eta})$ be a generic wlc pair
with a boundary $B_{X_\eta}$.
Then there exists a {\em maximal\/} complete wlc pair $(X_m/Z_m,B_m)$,
which is birationally equivalent to $(X_\eta,B_{X_\eta})$,
that is, there exists a flop
$$
(X_\eta,B_{X_\eta})\dashrightarrow (X_{\eta_m,m},B_{X_{\eta_m,m}}),
$$
where $\eta_m$ is a generic point of $Z_m$ and
the flop induces an isomorphism $\eta\cong\eta_m$.
The {\em maximal\/} property means an inequality
$\sB_m\umod\ge{\sB'}\umod$
for any complete wlc pair $(X'/Z',B')$,
which is birationally equivalent to $(X_\eta,B_{X_\eta})$.
For a maximal pair it is not necessary that $(X_m,B_m)$
is lc and $B_m$ is a boundary, but $(X_m,B_m)$ is
a log pair and $K_{X_m}+B_m$ is nef over $Z_m$.
However, always there exists a wlc maximal pair $(X_m/Z_m,B_m)$.

If $(X/Z,D)$ is an (irreducible) pair, which
is generically a wlc pair, then
its {\em maximal\/} pair is a maximal complete wlc pair of
$(X_\eta,D_{X_\eta})$, where $\eta$ is a generic point of $Z$,
in particular, $D_{X_\eta}$ is a boundary.
In this situation we denote a maximal moduli part of adjunction
by $\sD\umod$.
\end{propdf}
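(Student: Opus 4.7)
The plan is to construct a candidate maximal pair by compactifying and running the LMMP, verify maximality via monotonicity of the moduli b-divisor $\sB\umod$, and then pass to a wlc maximal pair by a crepant modification. First, I would build at least one complete pair birationally equivalent to $(X_\eta, B_{X_\eta})$: spread $B_{X_\eta}$ out to a neighbourhood of $\eta$ in a base $Z^\circ$, compactify $Z^\circ \subset Z$, extend by closure to a projective family $X \to Z$, and resolve. The extended boundary need not be wlc on the closure, but by the LMMP assumption the $(K_X + B)$-MMP over $Z$ terminates at a log pair $(X_m/Z_m, B_m)$ with $K_{X_m} + B_m$ nef over $Z_m$. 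Over the generic point $\eta$ nothing is contracted and $(X_\eta, B_{X_\eta})$ is preserved on the generic fibre up to a flop, giving the birational map in the statement.

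For maximality, the key input is monotonicity: under a $(K+B)$-negative extremal contraction or flip the moduli b-divisor $\sB\umod$ is non-decreasing, while under a crepant birational map---in particular a $(K+B)$-trivial flop---it is preserved. Given any competing complete wlc pair $(X'/Z', B')$ birationally equivalent to $(X_\eta, B_{X_\eta})$, I would run a $(K_{X'} + B')$-MMP to a minimal model and compare it with $(X_m/Z_m, B_m)$ on a common log resolution. Both minimal models are birationally equivalent via $(K+B)$-trivial maps, so their moduli b-divisors coincide, and monotonicity along the MMP from $(X', B')$ yields $\sB'\umod \le \sB_m\umod$.

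If $(X_m, B_m)$ fails to be wlc---because the MMP introduces coefficients outside $[0,1]$, or because log canonicity is lost---a dlt or crepant modification, existing by LMMP, produces a wlc pair whose moduli b-divisor equals $\sB_m\umod$. This establishes the existence of a wlc maximal pair. The moduli b-divisor $\sD\umod$ of a generically wlc pair $(X/Z, D)$ is then defined by applying the construction to the generic fibre $(X_\eta, D_{X_\eta})$, for which $D_{X_\eta}$ is automatically a boundary.

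The main obstacle is monotonicity of $\sB\umod$ under non-crepant MMP steps. Preservation under crepant maps is immediate from the definition via log adjunction on a common resolution, but monotonicity under extremal contractions and flips requires tracking the divisorial and moduli parts of adjunction through each MMP operation. This is closely tied to the b-semiampleness and effectiveness properties of the moduli part that are the principal concern of the paper and of Theorem~\ref{weak_Kawamata}.
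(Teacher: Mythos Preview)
Your approach differs substantially from the paper's. The paper's proof is one line: ``Immediate by the existence of a complete \tdlt\ family for $(X_\eta,B_{X_\eta})$ and by Proposition~\ref{can_moduli_part}.'' The work is not in running an MMP and tracking monotonicity, but in constructing the right compactification from the outset. One takes a complete \tdlt\ (toroidally dlt) family extending $(X_\eta,B_{X_\eta})$; Proposition~\ref{can_moduli_part} then asserts directly that any \tlc\ wlc family with horizontal boundary is already maximal, and identifies its moduli part with the relative log canonical sheaf $\omega_{X/Z}^1[B]$. Maximality is thus a consequence of the canonical description of $\sB\umod$ on a toroidal model, not of a comparison argument along MMP steps.

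Your route via monotonicity of $\sB\umod$ under $(K+B)$-negative contractions and flips is plausible in spirit, but the obstacle you flag is genuine and not addressed in the paper: nowhere does the paper prove or use such a monotonicity statement, and establishing it would require exactly the kind of control over divisorial and moduli parts through an MMP that Proposition~\ref{can_moduli_part} is designed to bypass. In effect, the paper trades your dynamic argument (follow $\sB\umod$ through the MMP) for a static one (build a single toroidal model on which $\sB\umod$ is computed by $\sK_{X/Z}\Lg+\overline{B}$ and is visibly maximal). If you want to align with the paper, replace the MMP-and-monotonicity core of your argument by an appeal to the existence of a \tdlt\ compactification and to Proposition~\ref{can_moduli_part}.
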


\begin{proof}
Immediate by the existence of a complete \tdlt\ family for $(X_\eta,B_{X_\eta})$ and
by Proposition~\ref{can_moduli_part}.
\end{proof}

\begin{exs}
(1) ($0$-mappings.)
Let $(X/Z,D)$ be a complete (irreducible) log pair such that
\begin{description}

\item{}
the generic fiber is a $0$-pair, possibly, not geometrically
irreducible, but $D$ is a boundary generically over $Z$;
and

\item{}
$K+D\equiv 0/Z$.
\end{description}
The complete property is global, that is,
$X$ and $Z$ are complete.
Then $(X/Z,D)$ is maximal itself \cite{PSh}.

In particular, if $f\colon X\to Z$ is a contraction, it is
a $0$-contraction and the maximal (upper) moduli part of
adjunction is $\R$-linear equivalent to
a pulling back of low moduli part of adjunction
(see Corollary~\ref{adjunt_for_0contr}):
$$
\sD\umod\sim_\R f^*\sD\smod.
$$

(2) (Maximality over curve; cf. a canonical moduli part
in Proposition~\ref{can_moduli_part}.)
Let $(X/C,B\Lg)$ be a complete \tlc\ pair such that
\begin{description}

\item{}
$C$ is a nonsingular complete curve,

\item{}
a generic fiber is wlc,
and

\item{}
$K+B\Lg$ is nef over $C$.
\end{description}
Then the pair is maximal.
The \tlc\ in this situation means
that $B\Lg=B+\sum D_i$ is a boundary such that
the vertical part of $B\Lg$ is a sum $\sum D_i$ of
reductions of fibers $D_i=(f^*p_i)\red,p_i\in C$,
the vertical sum includes all degenerations and
$(X,B\Lg)$ is wlc.
The inclusion of degenerations means that, if $p\in C\setminus\{p_i\}$,
then $(X,B\Lg+f^*p)$ is also wlc.
In particular, the fiber $f^*p$ is reduced.
The log structure of $(X/C,B\Lg)$ is given on $X$ by
the reduced horizontal divisors and reduction of vertical
degenerations $D_i$ of $f$, and
on $C$ by the (critical) points $p_i=f(D_i)\in C$.
A maximal (upper) moduli part $\sB\umm$ of adjunction for $(X/C,B\Lg)$ or for $(X/C,B)$
is stabilized over $X$ and is
$\overline{K+B\Lg-f^*(K_C+\sum p_i)}$.
Indeed, $B\sdiv\Lg=\sum p_i$.

Of course, we can add to $B\Lg$ some nondegenerate fibers $f^*p$ as above.
This does not change $\sB\umm$.
Moreover, if $D=B\Lg +f^*A$, where $A$ is any divisor on $C$, then
$(X/C,D)$ is also maximal and has the same moduli part as for $(X/C,B\Lg)$.
So,
$$
\sD\umm=\overline{D\umm}=\sB\umm=\overline{K+B\Lg-f^*(K_C+\sum p_i)}.
$$
Note that $K+B\Lg-f^*(K_C+\sum p_i)$ is
a divisor of a (log) canonical $\R$-sheaf
$\omega_{X/C}^1[B]$, an adjoint log sheaf,
where $B=B\Lg{}\hor=D\hor$ is the horizontal part of $B\Lg,D$.
If the fibers of $f$ are reduced, then $\omega_{X/C}=\omega_{X/C}^1$.

If $X$ is also a complete nonsingular curve,
then $X\to C$ is a finite morphism of curves, $B=0$, and $D,B\Lg=\sum q_{j,i}$ are vertical.
In this situation, $\sD\umm=D\umm\sim 0$, $B\Lg=\sum q_{j,i},q_{j,i}\in X$,
with $\sum_jq_{j,i}=D_i$, and
the above equation
$$
K+\sum q_{j,i}-f^*(K_C+\sum p_i)\sim 0
$$
is the Hurwitz formula.
The points $p_i\in C$ should include all critical ones.
\end{exs}

\begin{cor}\label{adjunt_for_0contr}
Let $(X/Z,D)$ be a complete (irreducible) log pair such that
\begin{description}

\item{}
$f\colon X\to Z$ is a contraction,

\item{}
the generic fiber is a $0$-pair, and

\item{}
$K+D\equiv 0/Z$.
\end{description}
Then $(X/Z,D)$ is maximal and
$$
\sD\umm=\sD\umod\sim_\R f^*\sD\smod.
$$
Moreover,
there exists an effective low moduli part $\sD\smod$
such that
\begin{description}

\item{}
the moduli parts $\sD\umod=f^*\sD\smod,D\umod=(\sD\umod)_X$,
and
$\sD\smod,D\smod=(\sD\smod)_Z$ are also effective  and flop invariant:
for every flop $g\in\Bir(X\to Z/k,D)$,
$$
g^*\sD\umod=\sD\umod,g^*D\umod=D\umod \text{ and }
g_Z^*\sD\smod=\sD\smod,g_Z^*D\smod=D\smod,
$$
where $g_Z\colon Z\dashrightarrow Z$ is
a birational automorphism induced by $g$;

\item{}
if $(X,D)$ is lc, klt, then
$(Z,D_Z)$ is lc, klt respectively,
where $D_Z=D\sdiv+D\smod$;

\item{}
if $D$ is a effective, then
the divisorial part $D\sdiv$,
the above moduli part $D\smod$ on $Z$ and
$D_Z$ are effective $\R$-divisors;

\item{}
if $D=B$ is a boundary, then
the divisorial part $B\sdiv=D\sdiv$,
the above moduli part $B\smod=D\smod$ on $Z$ and
$B_Z=B\sdiv+B\smod$ are boundaries;
and

\item{}
if $(X,Z)$ is a wlk (klt) pair, then
the pair $(Z,B_Z)$ is wlc (klt respectively).
\end{description}
\end{cor}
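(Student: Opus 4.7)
The plan is to assemble the statement from three ingredients: the maximality of $0$-contractions already recorded in Example~1, the canonical bundle formula applied to $K+D\equiv 0/Z$, and the relative b-representation formalism from the introduction used to average a semiample linear system so as to produce a flop-invariant effective moduli representative.

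First I would invoke Example~1 directly: since $f\colon X\to Z$ is a contraction whose generic fiber is a $0$-pair and $K+D\equiv 0/Z$, the pair $(X/Z,D)$ is itself maximal, so $\sD\umm=\sD\umod$. Numerical triviality $K+D\equiv 0/Z$ together with the canonical bundle formula on a sufficiently high log resolution produces an $\R$-divisor $D_Z=D\sdiv+D\smod$ on $Z$ with $K+D\sim_\R f^*(K_Z+D_Z)$, and hence $\sD\umod\sim_\R f^*\sD\smod$ at the level of b-divisors. The divisorial part $D\sdiv$ is determined, prime by prime on $Z$, by the log canonical threshold of $(X,D)$ along the fiber over the given prime; this makes the effective/boundary bullets for $D\sdiv$ automatic, and once $D\smod$ is effective it yields the lc/klt and wlc/klt bullets for $(Z,D_Z)$ via the standard Ambro-style comparison of discrepancies on $X$ and on $Z$.

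For flop invariance, every $g\in\Bir(X\to Z/k,D)$ preserves the generic $0$-fiber and hence induces a birational self-map $g_Z\colon Z\dashrightarrow Z$. Uniqueness of the maximal pair up to flop forces $g^*\sD\umod=\sD\umod$; pushforward by $f_*$ combined with $\sD\umod\sim_\R f^*\sD\smod$ then yields $g_Z^*\sD\smod=\sD\smod$, and the divisor-level equalities $g^*D\umod=D\umod$ and $g_Z^*D\smod=D\smod$ drop out by restriction to the relevant models. The wlc/klt preservation in the last bullet is then formal once $B_Z=B\sdiv+B\smod$ is a boundary and the adjunction equality is available on $Z$.

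The main obstacle, flagged explicitly in the introduction as the still-open b-mobile property, is producing an effective representative of the $\R$-linear class $\sD\smod$ that is simultaneously invariant under the action of $\Bir(X\to Z/k,D)$. Assuming the abundance/semiampleness hypothesis announced in the introduction, I would pick a base-point-free relative linear system for a sufficiently large multiple of the adjoint class, then average it via the relative b-representation associated with this action to produce such an effective, flop-invariant representative; the effective bullets for $D\smod$ and $D_Z$, and the boundary statement when $D=B$, then follow by taking the $f$-trace through the adjunction equality, and the remaining singularity bullets are immediate.
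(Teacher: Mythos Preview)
The paper does not actually supply a proof for this corollary: it is stated without a \texttt{proof} environment, and the introduction explicitly flags that ``the proof here is rather complete except for b-mobile property used in Corollary~\ref{adjunt_for_0contr}.'' So there is no paper-proof to compare against line by line.

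That said, your outline is a faithful reconstruction of how the result is meant to be assembled from the paper's ingredients. Maximality and the relation $\sD\umod\sim_\R f^*\sD\smod$ come from Example~(1) exactly as you say; the singularity bullets (lc/klt, boundary, wlc) are the standard Ambro-type canonical-bundle-formula statements; and the production of an \emph{effective, flop-invariant} representative of $\sD\smod$ is precisely what the relative b-representation machinery in the later section is built for --- the finiteness results there (e.g.\ Corollary~\ref{repr_lc_proj}) are what make an averaging argument over $\Bir(X\to Z/k,D)$ work. You have also correctly isolated the one genuine gap, namely the b-mobile/semiampleness input, which the paper itself leaves open.

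One small ordering issue: your paragraph on flop invariance first asserts $g^*\sD\umod=\sD\umod$ and $g_Z^*\sD\smod=\sD\smod$ as equalities of b-divisors, and only afterwards constructs the invariant effective representative. Up to $\sim_\R$ the class-level invariance is indeed automatic from maximality, but the divisor-level equalities in the statement refer to a \emph{specific} effective $\sD\smod$, and those only make sense once you have already run the averaging construction. So the logically clean route is: first use semiampleness plus the finite b-representation to build the flop-invariant effective $\sD\smod$, and then read off all four invariance equalities and the effectiveness/boundary bullets from that choice.
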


\begin{prop}[Canonical moduli part]\label{can_moduli_part}
Let $(X/Z,B)$ be a \tlc\
wlc (irreducible) family with a horizontal boundary $B$.
Then the pair is maximal,
its maximal moduli part is stabilized over $X$, and
any divisor $M$ of $\R$-sheaf $\omega_{X/Z}^1[B]$ is
a divisor of the upper maximal moduli part of adjunction.
In particular, it is the maximal moduli part for $(X_\eta,B_\eta)$,
where $\eta$ is a generic point of $Z$.
More precisely,
$$
B\umm=B\umod\sim M=K_{X/Z}\Lg+B
$$
and
$$
\sB_\eta\umm=
\sB_\eta\umod=
\sB\umm=\sB\umod=\overline{B\umm}=\overline{B\umod}
\sim\overline{M}=\sK_{X/Z}\Lg+\overline{B}.
$$
\end{prop}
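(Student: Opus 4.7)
The plan is to identify the maximal moduli part intrinsically with a divisor of the relative log canonical $\R$-sheaf $\omega_{X/Z}^1[B]$, and to deduce both the maximality and the stability over $X$ from the functoriality of this sheaf.

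First, I would generalize the accounting of Example~(2) from a curve base to an arbitrary $Z$: the \tlc\ property attaches to $(X/Z,B)$ the reductions $D_i$ of the degenerate fibers on $X$ and a critical divisor $\sum P_i$ on $Z$ with $P_i=f(D_i)$, together with the wlc boundary $B\Lg=B+\sum D_i$. Since $B$ is horizontal, the divisorial part on $Z$ is $B\sdiv\Lg=\sum P_i$, so relative log adjunction gives
$$
K_X+B\Lg-f^*(K_Z+\textstyle\sum P_i)=K_{X/Z}\Lg+B,
$$
which is a divisor $M$ of $\omega_{X/Z}^1[B]$. This yields the formula $B\umm=B\umod\sim M=K_{X/Z}\Lg+B$ on $X$, and taking b-closures gives $\sB\umm=\sB\umod=\overline{M}=\sK_{X/Z}\Lg+\overline{B}$.

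Next, I would establish stability over $X$. Because $\omega_{X/Z}^1[B]$ is defined intrinsically from the log structures on $X$ and $Z$, crepant pullback to any higher model leaves its divisor class unchanged, so the b-divisor $\sB\umm$ is already represented by $M$ on $X$. For maximality, I would take a competing complete wlc pair $(X'/Z',B')$ birational to $(X_\eta,B_{X_\eta})$, pass to a common b-model resolving the flop, and compare the two relative log canonical expressions: the negativity lemma applied to the crepant difference should force $(\sB')\umod\le\sB\umm$, which is the inequality defining maximality. Since the whole construction depends only on the generic fiber, this simultaneously yields $\sB_\eta\umm=\sB_\eta\umod=\sB\umm=\sB\umod$ and proves the \emph{in particular} clause.

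The main obstacle, as I see it, is pinning down the divisorial part on $Z$ as exactly $\sum P_i$: in higher dimensions the critical locus of a \tlc\ family need not behave as cleanly as over a curve, and one must verify that the adjunction formula still produces a boundary on $Z$ whose divisorial component is captured by the images $f(D_i)$. Once this identification is secured, the remaining conclusions follow rather formally from the intrinsic nature of $\omega_{X/Z}^1[B]$ and the negativity comparison on a common b-model.
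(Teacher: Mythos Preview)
The paper does not actually supply a proof of this proposition: after the statement it passes directly to the definition of canonical adjunction, and elsewhere (e.g.\ in the proof of the Proposition--Definition on maximal log pairs) the result is simply invoked. So there is no argument in the text to compare your sketch against.

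Judged on its own terms within the paper's framework, your outline is reasonable and is exactly the generalization of Example~(2) that the setup invites. Two points deserve more care. First, your stability step (``crepant pullback to any higher model leaves its divisor class unchanged'') is not automatic: you must check that after a toroidal (log) blowup of $(X,B\Lg)$ over a corresponding blowup of $(Z,\sum P_i)$ the new family is again \tlc\ with the induced horizontal boundary, so that $\omega_{X'/Z'}^1[B]$ is the pullback of $\omega_{X/Z}^1[B]$; this is what pins down $\overline{M}$ as a b-divisor and not merely a divisor on one model. Second, in the maximality step the negativity lemma alone does not give the inequality $(\sB')\umod\le\sB\umm$: you need to arrange the comparison so that the vertical part of the competing family, after pulling back to a common \tlc\ model, is dominated by the reduced fibers $\sum D_i$ --- this is precisely where the \tlc\ hypothesis (inclusion of all degenerations, reducedness of the remaining fibers) is used, as in the curve case of Example~(2). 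Your stated ``main obstacle'' about identifying $B\sdiv\Lg=\sum P_i$ is really the same issue viewed from the base, and once the \tlc\ toroidal structure is set up correctly it follows from the definition of the divisorial part.
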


\begin{df}[Canonical adjunction]
A {\em canonical (upper) maximal moduli part of
adjunction\/} is the $\R$-sheaf $\omega_{X/Z}^1[B]$ of
Proposition~\ref{can_moduli_part}.
It is a b-sheaf on $(X_\eta,B_{X_\eta})$.
Such a b-sheaf is unique on $X_\eta$ and
some times we denote it by $\sM$.
By $\sM$ we denote also a b-divisor of the last b-sheaf.
This divisor is defined up to linear equivalence and
$$
\sM=\overline{M},
$$
where $M$ is a divisor in Proposition~\ref{can_moduli_part}.

Respectively, a plane {\em moduli part of adjunction\/} $\sM$ is
either the $\R$-sheaf $\omega_{X/Z}^1[B]$ up to an $\R$-isomorphism, or
the $\R$-divisor $\sM$ up to $\R$-linear equivalence.
To define a low moduli part of adjunction
we need such a flexibility.
\end{df}

\section{Mapping $\prd$}

\begin{df}[Equivalent $0$-pairs]
The {\em equivalence\/} of connected $0$-pairs $(X,B)$ is
the minimal equivalence such that
\begin{description}

\item{\rm (1)}
component adjunction gives an equivalent $0$-pair,
that is,
any component $(X_i,B_i)$ of
a normalization $(X,B)\nor=\coprod (X_i,B_i)$
is a $0$-pair equivalent to a $0$-pair $(X,B)$ itself;

\item{\rm (2)}
any flopped $0$-pairs are equivalent, that is,
if $(X,B)\dashrightarrow (X',B_{X'})$ is
a flop of pairs with boundaries and $(X,B)$ is a $0$-pair,
then $(X',B_{X'})$ is a $0$-pair equivalent to $(X,B)$;

\item{\rm (3)}
divisorial adjunction gives an equivalent $0$-pair,
that is,
if $D\subset (X,B)$ is a divisorial lc center
of a $0$-pair $(X,B)$,
then the adjoint pair $(D,B_D)$ is
a $0$-pair equivalent to $(X,B)$; and

\item{\rm (4)}
field base change gives an equivalent $0$-pair,
that is,
if $(X,B)$ is a $0$-pair over a field $K/k$ and
$F/k$ is a field extension,
then any connected component of pair $(X,B)\otimes_kF$
is a $0$-pair over $F$ equivalent to $(X,B)$ over $K$.
\end{description}
\end{df}

\begin{exs}

(1)
(Log curves.)
Let $(C,B_C)$ be a $1$-dimensional $0$-pair,
that is, $C$ is a connected complete nodal curve and
$B_C$ is a boundary such that $K_C+B_C\sim_\R 0$.
Such a pair $(C,B_C)$ is equivalent to a $0$-dimensional
$0$-pair $(\pt,0)$ if $(C,B_C)$ is not klt,
equivalently, it has a node or
a nonsingular point $p\in C$ with $\mult_pB_C=1$.

Two $1$-dimensional klt $0$-pairs $(C,B_C)$ and
$(C',B_{C'})$ are equivalent if and only if
they are log isomorphic.

(5)
(Toric pairs.)
Any complete toric variety $X$ is
naturally a $0$-pair $(X,D)$, where
$D$ is its total invariant divisor.
All those pairs are equivalent and
they equivalent to $(\pt,0)$ of Example (1).

(6) (Koll\'ar's sources \cite{Kol11}.)
Let $(X,\Delta)$ be a log pair,
$Z$ be its lc center,  and
$\Delta\ge 0$ near a generic point of $Z$.
Then one can associate to $Z$ a class of $0$-contractions
$(S/\widetilde{Z}_S,\Delta_S)$, a relative source.
The class of pairs $(S,\Delta_S)$ up to flops is denoted by $\text{Scr}(Z,X,\Delta)$
and is called the source of $Z$ in $(X,\Delta)$.
An interest to the class is related to the divisorial part of adjunction
(see for more details in \cite[Theorem~1]{Kol11}).
On the other hand, the moduli part of adjunction is
related to the equivalence class of generic pairs $(S_\eta,B_{S_\eta})$
of relative sources,
where $\eta$ is a generic point of $\widetilde{Z}_S$ (see for Corollary~\ref{adjunt_for_0contr}).
The mapping of sources into equivalence classes is not
injective, except for, the case with $\eta=\widetilde{Z}_S=\pt$ and
$Z=\pt$.
The divisorial and  moduli part in this case are $0$ and $\sim_\R 0$ respectively.

(7) (Characteristic.)
Let $k$ be a prime field and $F/k$ is a field extension.
Then by definition
the $F$-point $\pt_F=(\Spec F,0)$ is
equivalent to $\pt=(\Spec k,0)$:
$\pt_F=\pt\otimes_kF$.
So, the equivalence class of $\pt$ is the class of
$\pt_F$ for field extensions $F/k$.
This class of fields is uniquely determined by
$\chr k$.
\end{exs}

\begin{prop} \label{klt0pair}
Every equivalence class of $0$-pairs has
an (irreducible) projective klt representative $(X,B)$.
Two klt $0$-pairs over $k$ are equivalent
if and only if they are related by
a generalized flop.
\end{prop}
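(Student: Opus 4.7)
The plan is to proceed in two parts, corresponding to the existential claim and the characterization.

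Part 1 (extracting a projective klt representative). Given any $0$-pair $(X,B)$, I would first apply component adjunction (1) to reduce to the irreducible case, since each component of a normalization is an equivalent $0$-pair by definition. Next, replace $(X,B)$ by a dlt modification; this is a crepant birational morphism and hence qualifies as a flop under (2), so the equivalence class is preserved. If the resulting dlt $0$-pair is not klt, select a minimal lc center $W$; after a further crepant modification extracting $W$ as a divisor one may assume $W$ is a divisorial lc center, and divisorial adjunction (3) then passes to the equivalent $0$-pair $(W,B_W)$ of strictly smaller dimension. Iterating this descent terminates at a klt $0$-pair. For projectivity, I would at the end run a $(K+B)$-trivial MMP on a log resolution to land on a projective crepant model, using the LMMP and semiampleness assumed in the introduction.

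Part 2 (klt equivalence $\Longleftrightarrow$ generalized flop). The easy direction $(\Longleftarrow)$ is immediate: a generalized flop is by construction a composition of flops in the sense of (2). For $(\Longrightarrow)$, the key observation is that from a klt $0$-pair the forward directions of (1) and (3) are trivial (klt implies normal and irreducible, and klt pairs have no divisorial lc centers), while (2) and (4) preserve klt. So every non-klt intermediate pair in an equivalence chain between two klt $0$-pairs must arise from an \emph{inverse} application of (3), i.e., by realizing a klt pair as the generic source of some lc center in a larger non-klt $0$-pair. Koll\'ar's source invariance (Example~(6) and \cite{Kol11}) then asserts that the class of such a source up to flops depends only on the ambient non-klt pair and its lc center. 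Concatenating this invariance along the equivalence chain shows that any two klt representatives so linked are related by a sequence of flops, that is, by a single generalized flop. Field base change (4) is absorbed by performing the generalized flop over a common extension and descending back to $k$.

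I expect the main obstacle to be the precise handling of the equivalence chain through non-klt intermediate pairs in Part 2, specifically the coherent application of Koll\'ar's source invariance across the chain and the careful treatment of field base change. The dimension induction in Part 1 is routine, but the final projectivity step rests on the MMP hypotheses recorded in the introduction.
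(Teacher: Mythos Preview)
Your Part~1 is sound and matches what the paper does implicitly: pass to a \tdlt\ model (a flop, hence allowed by (2)) and descend to a minimal lc center, which is klt by adjunction; the paper records this in the ``In particular'' clause of Lemma~\ref{mincents}.

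In Part~2 your approach diverges from the paper's and contains an oversimplification. The paper does not invoke Koll\'ar's source invariance as a black box; instead it places Lemma~\ref{mincents} immediately after the proposition as the intended engine, and that lemma is proved internally via the interlacing machinery (Theorem~\ref{can_0-perest} and Lemma~\ref{chain_of_flop}). The mechanism is: to any connected \tdlt\ $0$-pair attach its minimal lc centers, and show that whenever two such pairs have irreducible lc centers related by a flop, \emph{all} their minimal lc centers lie in one generalized-flop class. One then walks the equivalence chain, replacing each intermediate pair by a \tdlt\ model and tracking minimal lc centers; each generator (1)--(3) is covered because component adjunction, flops, and divisorial adjunction all fit the hypothesis of Lemma~\ref{mincents} (the gluing divisors handle (1), Lemma~\ref{flopping_lc_center} handles (2), and lc centers of $D$ are lc centers of $X$ for (3)).

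Your assertion that ``every non-klt intermediate pair \dots\ must arise from an inverse application of (3)'' is not correct: inverse (1) also leaves the klt world (the klt pair becomes one component of a non-normal or reducible pair), and once you are non-klt you can apply any of (1)--(4) in either direction. So the chain is not simply a sequence of ``realize as a source, then extract the source''; you genuinely need an invariant attached to every $0$-pair, stable under all four generators, whose value on a klt pair is its own flop class. The paper's invariant is ``generalized-flop class of a minimal lc center of a \tdlt\ model'', and Lemma~\ref{mincents} plus Lemma~\ref{chain_of_flop} are exactly the well-definedness and stability statements. Your appeal to \cite{Kol11} is morally the same idea, but as written it does not cover generator~(1) and is vague on how source invariance for a \emph{fixed} ambient pair concatenates across a chain of \emph{different} ambient pairs. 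If you want to keep the Koll\'ar route, you should state and verify explicitly that the source class is unchanged under each of (1)--(4); this is essentially reproving Lemma~\ref{mincents}.
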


\begin{lemma} \label{mincents}
Let $(X,B),(X',B')$ be
connected \tdlt\ $0$-pairs,
$(V,B_V),(V',B_{V'}')$ be irreducible adjoint lc centres $V,V'$
of those pairs respectively such that
there exists a flop
$$
(V,B_V)\dashrightarrow(V',B_{V'}'),
$$
and
$(W,B_W),(W',B_{W'}')$ be adjoint pairs of
minimal lc centers $W,W'$ of $(X,B),(X',B')$ respectively.
Then $(W,B_W),(W',B_{W'}')$ are klt $0$-pairs and
are related by a (generalized) flop.
In particular, the conclusion holds
for adjoint pairs $(W,B_W),(W',B_{W'})$ of
any two minimal lc centers $W,W'$ of $(X,B)$.
\end{lemma}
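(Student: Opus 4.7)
The plan is to show that $(W,B_W)$ and $(W',B_{W'}')$ are equivalent as $0$-pairs in the sense of the definition of equivalent $0$-pairs given earlier, and then invoke Proposition~\ref{klt0pair} to upgrade this equivalence to a generalized flop. The klt-ness of the two minimal adjoint pairs is automatic: at a minimal lc centre of a \tdlt{} pair the adjoint pair is klt, while the $0$-pair property is inherited from $(X,B)$ and $(X',B')$ via iterated divisorial adjunction (item~(3) of the definition).

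First I would reduce to the case $W\subset V$ and $W'\subset V'$. Any lc centre of a \tdlt{} pair contains a minimal lc centre, so such $W\subset V$ and $W'\subset V'$ exist; and by the standard behaviour of adjunction on \tdlt{} pairs, the adjoint structure on $W$ computed from $(V,B_V)$ agrees with the one inherited directly from $(X,B)$, and analogously on the primed side. I would then chain three equivalences of $0$-pairs:
\begin{equation*}
(W,B_W)\;\sim\;(V,B_V)\;\sim\;(V',B_{V'}')\;\sim\;(W',B_{W'}').
\end{equation*}
The middle equivalence comes from item~(2) applied to the hypothesised flop. Each outer equivalence is obtained by iterating item~(3) along a descending chain of divisorial lc centres of $(V,B_V)$ terminating at $W$, and analogously for $(V',B_{V'}')$ terminating at $W'$; such chains exist because in a \tdlt{} pair every lc centre is reached from the ambient one by a finite sequence of divisorial adjunctions. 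Once $(W,B_W)\sim(W',B_{W'}')$ is established between klt $0$-pairs, Proposition~\ref{klt0pair} produces the claimed generalized flop.

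The \emph{In particular} clause then drops out of the main statement applied to $(X',B')=(X,B)$, with $V=V'=X$ and the identity flop: the hypothesis degenerates to a choice of two minimal lc centres $W,W'$ of the single pair $(X,B)$, which automatically lie in the common ambient $V=V'=X$, and the main conclusion delivers the asserted relationship between their adjoint pairs.

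The principal obstacle is the chain argument in Step~2: one must verify at each stage of the descent that the next stratum is a divisorial lc centre of the current one and that the adjoint boundary produced at that step coincides with the inductive boundary, so that a single application of item~(3) is legitimate. This compatibility hinges essentially on the \tdlt{} property, under which the stratification by lc centres is sufficiently transverse that divisorial adjunctions iterate cleanly. Without \tdlt{}, one would first need to pass to a \tdlt{} modification and track the resulting change of boundaries, which would require a substantially heavier argument.
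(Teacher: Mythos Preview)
Your approach routes the argument through Proposition~\ref{klt0pair}, but this is almost certainly circular. That proposition is stated in the paper without proof, immediately before the present lemma, and its nontrivial direction --- that two equivalent klt $0$-pairs are related by a generalized flop --- is exactly the kind of statement Lemma~\ref{mincents} is designed to supply. To prove that direction one must show that each of the four moves generating the equivalence relation can be realized by generalized flops at the level of klt representatives, and for the divisorial-adjunction move~(3) this is precisely the content of the present lemma. So invoking Proposition~\ref{klt0pair} here just pushes the real work back onto itself.

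The paper instead constructs the generalized flop directly from Theorem~\ref{can_0-perest} and Lemma~\ref{chain_of_flop}. The latter says that any two minimal lc centres of a connected \tdlt\ $0$-pair (or $0$-contraction) are linked by a chain of flopping centres; the former supplies, for each link, a canonical conic-bundle flop between the two adjacent minimal centres. Composing along the chain produces the generalized flop between any two minimal centres of a single $(X,B)$ --- this is the ``in particular'' clause, and in the paper's logic it is the primitive case, not a consequence. For the main statement one then connects $W$ to a minimal centre sitting in $V$, passes across the given flop $(V,B_V)\dashrightarrow(V',B_{V'}')$, and connects on to $W'$ inside $(X',B')$.

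A smaller gap: your derivation of the ``in particular'' clause sets $V=V'=X$, but $X$ is not an lc centre of $(X,B)$, so the hypothesis of the main statement is not literally satisfied. As noted above, the paper handles this clause first, directly, and builds the general statement on top of it rather than the other way around.
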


\begin{proof}
Immediate by Theorem~\ref{can_0-perest} and Lemma~\ref{chain_of_flop}.
\end{proof}

\section{Toroidal geometry}

\begin{prop}\label{torus_invar-pt}
Let $X\subseteq \PP(W^v)$ be a nondegerate projective variety, and
$g$ be a semisimple operator on $W$.
Suppose that $X$ is invariant under
the dual (contragarient) action of $g^v$.
Then there exists an integral number $n\not=0$ and a point $x\in X$ such that
\begin{description}

\item{\rm (1)}
$(g^v)^n$ invariant: $(g^v)^nx=x$,
and, moreover,

\item{\rm (2)}
if $g$ is not torsion, then
the point $x$ has a nonzero coordinate $w(x)$,
where $w$ is a coordinate (linear) function,
an eigenfunction under $g$
with an eigenvalue, which is not a root of unity.
\end{description}
\end{prop}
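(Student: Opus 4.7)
The plan is to exhibit a fixed point on $X$ for a suitable subtorus of the Zariski closure of $\langle g^v \rangle$ via Borel's fixed-point theorem. First I would let $G \subseteq \mathrm{GL}(W^v)$ be the Zariski closure of $\langle g^v \rangle$. Since $g^v$ is semisimple it is contained in a maximal torus of $\mathrm{GL}(W^v)$, hence $G$ is a commutative diagonalizable algebraic group; write $T = G^0$ for its identity component, a subtorus, and note that $G/T$ is finite, so $(g^v)^n \in T$ for some positive integer $n$. The torsion case is immediate: if $g^N = \mathrm{id}$ then $(g^v)^N$ fixes every $x \in X$ and condition (2) is vacuous. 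Henceforth I assume $g$ non-torsion, which forces $\dim T \geq 1$.

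Next I would diagonalize $g$ on $W$ and split the eigenspace decomposition as $W = W_{\mathrm{r}} \oplus W_{\mathrm{nr}}$ according to whether the eigenvalue is a root of unity; non-torsion of $g$ guarantees $W_{\mathrm{nr}} \neq 0$. Dually $W^v = W^v_{\mathrm{r}} \oplus W^v_{\mathrm{nr}}$. The key identification is that $W^v_{\mathrm{r}}$ coincides with the $T$-fixed subspace $(W^v)^T$: in the eigenbasis, $G$ sits inside the diagonal torus $(\mathbb{G}_m)^m \subseteq \mathrm{GL}(W^v)$ as the Zariski closure of $\{(\mu_1^k,\ldots,\mu_m^k) : k \in \Z\}$, where $\mu_i$ is the $g^v$-eigenvalue on the $i$-th basis vector, and the $i$-th coordinate character restricts trivially to $T$ exactly when $\mu_i^k$ is a root of unity for every $k$, equivalently when $\mu_i$ itself is a root of unity.

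With this in hand, nondegeneracy of $X$ forces $X \not\subseteq \PP(W^v_{\mathrm{r}})$, since the right-hand side is a proper linear subspace. I would pick $y = [v] \in X \setminus \PP(W^v_{\mathrm{r}})$, decompose $v = \sum_\chi v_\chi$ into $T$-isotypic pieces, and fix some nontrivial character $\chi_0$ with $v_{\chi_0} \neq 0$. Because the pairing $X^*(T) \otimes X_*(T) \to \Z$ is nondegenerate over $\Q$, there exists a cocharacter $\phi : \mathbb{G}_m \to T$ with $\langle \chi_0, \phi \rangle < 0$. A standard weight-limit computation then yields $\lim_{t \to 0} \phi(t) \cdot y = [\sum_{\langle \chi, \phi \rangle = d} v_\chi] \in \PP(W^v_{\mathrm{nr}})$, where $d = \min_{\chi \in \mathrm{supp}(v)} \langle \chi, \phi \rangle < 0$ and every character achieving the minimum is necessarily nontrivial. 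Hence $X \cap \PP(W^v_{\mathrm{nr}})$ is nonempty; being closed, $T$-invariant, and projective, it admits a $T$-fixed point $x$ by Borel's fixed-point theorem. Then $(g^v)^n x = x$ for $n = |G/T|$ proves (1); and writing $x = [v]$ with $0 \neq v \in W^v_{\mathrm{nr}}$, some eigencomponent $v_\lambda$ with $\lambda$ not a root of unity is nonzero, so any $w \in W_\lambda$ with $\langle w, v_\lambda \rangle \neq 0$ is a coordinate eigenfunction of $g$ with non-root-of-unity eigenvalue satisfying $w(x) \neq 0$, proving (2).

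The principal technical point is the identification $W^v_{\mathrm{r}} = (W^v)^T$, which couples the identity component of the Zariski closure of $\langle g^v \rangle$ to the arithmetic distinction between root-of-unity and non-root-of-unity eigenvalues. Once this is in place, the remainder is a clean packaging of Borel's fixed-point theorem together with a single 1-parameter-subgroup limit establishing that $X \cap \PP(W^v_{\mathrm{nr}})$ is nonempty.
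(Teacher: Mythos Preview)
Your argument is correct and close in spirit to the paper's, but the execution differs in a way worth noting. Both proofs pass to the torus $T = G^0$ inside the Zariski closure of $\langle g^v\rangle$, both identify the root-of-unity eigenspace with the $T$-fixed subspace, and both produce a $T$-fixed point with a nonzero coordinate in the complementary part. The paper, however, works by hand: it takes a \emph{general} point of $X$ with all coordinates nonzero, forms its toric orbit closure $\overline{Y}\subseteq X$, parametrizes $T$ by weight vectors $(a_{1,i},\dots,a_{d,i})$, orders weights lexicographically, and after possibly flipping signs to make the action ``positive'' reads off the $T$-invariant limit point explicitly as the projection onto the maximal-weight coordinates, which are forced to lie among the non-root-of-unity indices. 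Your route replaces this explicit toric limit by two cleaner black boxes: a single cocharacter limit to show $X\cap\PP(W^v_{\mathrm{nr}})\neq\emptyset$, and then Borel's fixed-point theorem applied to that closed $T$-stable subvariety. The paper's version is more constructive (one sees the fixed point as an actual coordinate projection), while yours is shorter and isolates the key identification $W^v_{\mathrm r}=(W^v)^T$ as the only nontrivial ingredient.
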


\begin{proof}
Take a basis $w_1,\dots,w_d,d=\dim W$, with
eigenvectors $w_i$ for $g$ such that
the eigenvalues $e_1,\dots,e_l$ of $w_1,\dots,w_l,0\le l\le d$,
respectively  are nonroots of unity and
the eigenvalues $e_{l+1},\dots,e_d$ of $w_{l+1},\dots,w_d$
respectively are roots of unity.
(Actually, for the dual action $g^v\colon w_i^v\mapsto e_iw_i^v$,
because the representation is commutative.)

If $l=0$, take any $x\in X$ and a uniform torsion $n$ of all roots
of unity $e_i$.

If $l\ge 1$, take a sufficiently general point $x\in X$, that is,
all homogeneous coordinates $x_i=w_i(x)\not=0$.
Then the Zariski closure of the orbit $(g^v)^m(x),m\in\Z$,
in $\PP(W^v)$ is the closure $\overline{Y}$ of a subtoric orbit $Y$ with
respect to the coordinate system $w_i^v$,
that is, the torus action by diagonal matrices
in this basis.
By construction $Y\subseteq\overline{Y}\subseteq X$.
The group generated by $g^v$ is dense in the subtorus $T$
in the Zariski topology.
Let $T_1$ be the connected component of the unity $1$.
Then $n=\# T/T_1$, a torsion of the Abelian quotient $T/T_1$.

The subvariety $\overline{Y}$ is toric with respect to $T_1$, and
has a $T_1$-invariant point $y$
with some homogeneous coordinate $w_i(y)\not=0,1\le i\le l$,
where $T_1=T_1^n=T^n$.
So it is $(g^v)^n$-invariant (1).
(We identify the point $y=(y_1:\dots:y_d)\in X\subseteq \PP(W^v)$
with a line $k(y_1,\dots,y_d)$ in $W^v$.)

The subtorus $T_1$ has the following parameterization:
$$
k^{*d}\twoheadrightarrow T_1, (t_1,\dots,t_d)\mapsto
(\prod_j t_j^{a_{j,i}}), a_{j,i}\in \Z,
i,j=1,\dots,d,
$$
with the weighted action:
$$(w_i^v)\mapsto (\prod_j t_j^{a_{j,i}}w_i^v).
$$
(We can suppose that this is an action of the whole
torus $k^{*d}$.
Actually, under our assumptions, the action is smaller:
all $a_{j,i}=0$ for $i\ge l+1$.)
The weight of vector $w_i^v$ or of coordinate $x_i$
is the vector $(a_{1,i},\dots,a_{d,i})$.
The wights are ordered lexicographically:
$(a_1,\dots,a_d)\ge (b_1,\dots,b_d)$ if
$a_1>b_1$ or $a_1=b_1,a_2>b_2$, etc.

We can assume that the action is {\em positive\/}.
This implies (2).
The positivity means that the maximal weight vector
$(a_{j,i})$ is {\em positive\/}: all $a_{j,i}\ge 0$ and some $a_{j,i}>0$.
Under our assumptions, $i\le l$.
If the action is not positive, then
changing the action of $t_j$ for some $j$ by the inverse one $t_j\1$
(equivalently, change $a_{j,i}$ on $-a_{j,i}$),
any action can be converted into a positive one.
The positivity of action allows to get
an invariant point with $w_i(y)\not=0,1\le i\le l$,
if $w_i^v$ is maximal.

More precisely, a $T_1$-invariant vector $y\in \overline{Y}$
can be constructed as follows.
The vector $y=(y_i)$ has the coordinates $y_i=x_i$,
if $w_i^v$ has the maximal weight and
$y_i=0$ for the other coordinates.
Then $y\in \overline{Y}$ (the closure of orbit $T_1x=T_1(x_i)$ of $x$) and
(1) $T_1$-invariant  with (2) $w_i(y)=x_i\not=0$ for any maximal $w_i^v$.
\end{proof}

\section{Isomorphisms and flops}

\begin{lemma} \label{etal-action}
Let $(X,B)$ be a wlc klt pair, and
$\sD$ be a b-polarization on $X$.
The natural mapping
$$
\alpha_\sD\colon \Bir_0(X,B)=\Aut_0(X,B)\to \bPic_\sD X, a\mapsto \text{class }\sO_X(a^*\sD),
$$
is an isogeny of Abelian varieties on the image.
Thus $\Aut_0(X,B)$ is an Abelian variety.

The fiber $G_\sD=\alpha_\sD\1(\alpha_\sD \sD)\subseteq \Aut_0(X,B)$
is a subgroup and coincide with
the kernel of natural homomorphism
$$
\gamma_\sD\colon \Aut_0(X,B)\to \Pic_0 X, a\mapsto \text{class}\sO_X(a^*\sD-\sD).
$$
More precisely,
$$
G_\sD=
\ker\gamma_\sD=\Aut(X,B,\linsys{\sD})\cap
\Aut_0(X,B)\subseteq \Aut_0(X,B)
$$
is a finite Abelian group and depend only on
the algebraic (not numerical) equivalence class:
$$
\sD\approx \sD'\Rightarrow G_\sD=G_{\sD'},A_\sD=A_{\sD'}.
$$

For a projective $0$-pair $(X,B)$,
$\alpha_\sD$ is an isogeny onto.
In general (proper) case $\Aut_0(X,B)$ should be
replaced by $\Bir_0(X,B)$.
\end{lemma}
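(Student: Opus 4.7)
My plan is to prove the statement in stages: establish the algebraic-group structure on $\Bir_0(X,B)=\Aut_0(X,B)$, show that $\gamma_\sD$ is a homomorphism of algebraic groups, prove finiteness of its kernel $G_\sD$ via a b-representation argument, and then conclude the Abelian-variety assertions together with the remaining formal claims.

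First I would verify $\Bir_0(X,B)=\Aut_0(X,B)$ and that it is a connected algebraic group $G$. On a wlc klt pair every element of $\Bir(X,B)$ is an isomorphism in codimension one, and the subgroup $\Aut(X,B)\subseteq\Bir(X,B)$ is closed (being a morphism is a closed condition on the graph), so the connected component through the identity lies in $\Aut(X,B)$. The algebraic-group structure on $\Aut(X,B)$ is the standard Matsumura--Mukai theorem for projective $X$ (extended to the proper case as signalled at the end of the statement); set $G:=\Aut_0(X,B)$. Since $G$ is connected, its pull-back action on $\Pic_0 X$ is trivial, so expanding $(ab)^*\sD-\sD$ modulo $\Pic_0 X$ collapses to $\gamma_\sD(ab)=\gamma_\sD(a)+\gamma_\sD(b)$, and $\gamma_\sD\colon G\to\Pic_0 X$ is a morphism of algebraic groups. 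The map $\alpha_\sD$ is the translate of $\gamma_\sD$ by the class of $\sD$ in $\bPic_\sD X$; hence $\alpha_\sD\1(\alpha_\sD\sD)=\ker\gamma_\sD=G_\sD$, and the identification $G_\sD=\Aut(X,B,\linsys{\sD})\cap G$ is immediate because $a\in\ker\gamma_\sD$ iff $a^*\sD\sim\sD$ iff $a$ preserves $\linsys{\sD}$.

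The main obstacle is finiteness of $G_\sD$. I would fix a model $X'$ on which $\sD$ descends to a b-nef and b-big divisor $D$ and invoke the b-representation finiteness theorem for klt pairs: the natural representation
$$
\rho\colon\Aut(X,B,\linsys{\sD})\longrightarrow GL\bigl(H^0(X',\sO(D))\bigr)
$$
has finite image. Restricting to $G_\sD$ still gives finite image, and the kernel $G_\sD\cap\ker\rho$ consists of elements of $\Aut_0$ acting trivially on sections of $D$; since $\sD$ is a b-polarization the Kodaira map of $D$ is birational, so any such element acts trivially on $X'$ and hence on $X$. Thus $\rho$ restricted to $G_\sD$ has trivial kernel, forcing $G_\sD$ to be finite. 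With $G_\sD$ finite, $\gamma_\sD$ is an isogeny from $G$ onto an algebraic subgroup of the Abelian variety $\Pic_0 X$, and so $G$ is itself an Abelian variety (hence commutative, and $G_\sD$ is therefore Abelian); this justifies all the Abelian-variety language in the statement.

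The remaining claims are formal. Algebraic-equivalence invariance: $\sD\approx\sD'$ gives $\sD-\sD'\in\Pic_0 X$, fixed by every $a\in G$ by connectedness, so $a^*\sD-\sD=a^*\sD'-\sD'$ and $\gamma_\sD=\gamma_{\sD'}$, whence $G_\sD=G_{\sD'}$ and the image Abelian subvarieties $A_\sD=A_{\sD'}$ coincide. For a projective $0$-pair, surjectivity of $\alpha_\sD$ onto $\bPic_\sD X$ I would deduce from the fact that the component of $\bPic_\sD X$ through $\sD$ is a torsor under $\Pic_0 X$ and that, using $K_X+B\sim_\R 0$, pull-back by $G$ already exhausts this torsor; equivalently, $\gamma_\sD$ is surjective onto $\Pic_0 X$, making $\alpha_\sD$ an onto isogeny.
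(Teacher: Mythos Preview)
The paper states Lemma~\ref{etal-action} without proof: there is no proof environment following the statement, and the text moves directly to Theorem~\ref{aut_tame}. So there is nothing in the paper to compare your argument against at the level of strategy.

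That said, your proposed proof has a genuine circularity within the logical structure of this paper. For the crucial step---finiteness of $G_\sD$---you invoke a ``b-representation finiteness theorem for klt pairs'' to conclude that the image of
\[
\rho\colon G_\sD \longrightarrow GL\bigl(H^0(X',\sO(D))\bigr)
\]
is finite. In this paper, the relevant finiteness statements are Theorem~\ref{flop_repres}, Corollary~\ref{repr_wlc_can}, Theorem~\ref{repr_slc_proj}, and Corollary~\ref{repr_klt_lin}. All of these trace back through Lemma~\ref{Burnside_3}, whose proof explicitly appeals to Lemma~\ref{etal-action} (to pass from algebraic to linear equivalence of polarizations fiberwise). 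So you are using the lemma to prove itself. If you intend an external reference (Ueno, Fujino--Gongyo, etc.), note that those results are for pluricanonical representations, not for an arbitrary b-polarization $\sD$; you would need to explain why they apply here.

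There is a more direct route that avoids the circularity. Since $\sD$ is a b-polarization, $G_\sD$ acts faithfully and linearly on $\PP\bigl(H^0(X,m\sD)\bigr)$ for $m\gg 0$, hence $G_\sD$ is an \emph{affine} algebraic group. Independently, for a wlc klt pair one shows that $\Aut_0(X,B)$ has trivial connected affine part (equivalently, is proper): a nontrivial $\mathbb G_a$ or $\mathbb G_m$ in $\Aut_0(X,B)$ would produce a covering family of rational orbit-curves on which one derives a contradiction with $K_X+B$ nef and the klt hypothesis (this is the Fujiki--Lieberman--Matsusaka style argument). Then $G_\sD$, being simultaneously affine and a closed subgroup of a proper group, is finite; and $\gamma_\sD$ exhibits $\Aut_0(X,B)$ as isogenous to an abelian subvariety of $\Pic_0 X$. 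This ordering matches the ``Thus $\Aut_0(X,B)$ is an Abelian variety'' in the statement without appealing to the paper's later representation results. Your treatment of the remaining formal claims (that $\gamma_\sD$ is a homomorphism, the identification of $G_\sD$ with $\Aut(X,B,\linsys{\sD})\cap\Aut_0(X,B)$, algebraic-equivalence invariance, and the $0$-pair surjectivity) is fine.
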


\begin{thm}\label{aut_tame}
Let $(X,B,H)$ be a klt wlc triple with a polarization $H$,
an ample sheaf or an ample divisor up to algebraic or
up to numerical equivalence.
Then the group
$$
\Bir(X,B,H)=\Aut(X,B,H)
$$
is tame.
More precisely, the group is algebraic of finite type and
complete (almost Abelian).
\end{thm}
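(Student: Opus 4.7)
The plan is to realize $\Aut(X,B,H)$ as a finite extension of the Abelian variety $\Aut_{0}(X,B)$ supplied by Lemma~\ref{etal-action}. I will argue in three steps: (i) the equality $\Bir(X,B,H)=\Aut(X,B,H)$, (ii) the identity component coincides with $\Aut_{0}(X,B)$, and (iii) the component group $\Aut(X,B,H)/\Aut_{0}(X,B)$ is finite. Granted these, the full group is algebraic of finite type with connected component an Abelian variety, hence tame in the sense stated.

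For (i), the key observation is that for small $\epsilon>0$ the $\R$-divisor $K_{X}+B+\epsilon H$ is ample, since $K_{X}+B$ is nef and $H$ is ample. Thus $(X,B+\epsilon H)$ is a klt pair whose log canonical divisor is ample, i.e., $X$ is its own log canonical model. Any $f\in \Bir(X,B,H)$ preserves $B$ exactly and $H$ up to the given equivalence, hence preserves the numerical class of $K_{X}+B+\epsilon H$; birational self-maps of a log canonical model preserving its ample log canonical class are biregular by the standard argument via finite generation of the log canonical ring and the $\mathrm{Proj}$ construction. For (ii), any connected algebraic subgroup of $\Aut(X,B,H)$ acts trivially on the discrete group $\NS(X)$, so $\Aut_{0}(X,B,H)\subseteq \Aut_{0}(X,B)$; conversely, for the same reason $\Aut_{0}(X,B)$ preserves the class of $H$, so the two identity components agree and Lemma~\ref{etal-action} identifies them with an Abelian variety.

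For (iii), the Hodge index theorem is the main input: $\Aut(X,B,H)$ acts on $\NS(X)_{\R}$ by isometries of the intersection pairing and fixes the ample class $[H]$, hence preserves the orthogonal complement $H^{\perp}$, which is negative definite. The orthogonal group $O(H^{\perp})$ is therefore finite, so the natural representation $\Aut(X,B,H)\to \mathrm{GL}(\NS(X)_{\R})$ has finite image; its kernel is contained in $\Aut_{0}(X,B)$ by step (ii), yielding a finite component group. I expect the main obstacle to lie in step (i): the identification $\Bir=\Aut$ in the wlc setting requires combining klt singularities, nefness of $K+B$, and ampleness of $H$ to rule out flops and other small modifications, and handling the ``up to numerical equivalence'' case cleanly may require passing to a high multiple and working with the associated projective embedding rather than directly with $H$.
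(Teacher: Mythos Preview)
The paper states Theorem~\ref{aut_tame} without proof, so there is no argument to compare against; I evaluate your proposal on its own.

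Steps (i) and (ii) are fine. The gap is in step (iii): your claim that the kernel of $\Aut(X,B,H)\to\mathrm{GL}(\NS(X)_{\R})$ lies in $\Aut_0(X,B)$ does not follow from step (ii), which only gives the implication ``connected $\Rightarrow$ acts trivially on $\NS$'', not its converse. The converse is in fact false: on an Abelian variety $A$ with $B=0$, the involution $(-1)$ acts trivially on $\NS(A)$ (because $(-1)^*L\otimes L^{-1}\in\Pic^0 A$ for every line bundle $L$) yet does not lie in $\Aut_0(A)=A$. So the Hodge index argument bounds the image, but you have not bounded the kernel modulo $\Aut_0$.

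What is actually needed is the Fujiki--Lieberman type statement that automorphisms fixing an ample numerical class form a group with finitely many components. One way to supply it: fix a very ample $L$ in the class of a multiple of $H$; any $g$ in the kernel sends $L$ to some $L'$ with $L'\equiv L$, and all pairs $(X,L')$ with $L'$ in this numerical class embed in a fixed $\PP^N$ with the same Hilbert polynomial. Hence $\Aut(X,B,H)$ is realised inside a finite-type group scheme acting on a bounded piece of the Hilbert scheme, and therefore has finitely many components. With this correction your overall strategy --- $\Bir=\Aut$ via ampleness of $K_X+B+\varepsilon H$, identity component an Abelian variety by Lemma~\ref{etal-action}, finite $\pi_0$ --- goes through.
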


\begin{df}
Let $(X/T,B)$ be a connected family.
The family is {\em moduli part trivial}, for short,
{\em mp-trivial\/}, if its upper moduli part of adjunction
$\sM$ behaves on $X$ as on a trivial fibration:
$\Ii(X,\sM)=\Ii(X/T,\sM)$ or, equivalently,
there are rather general horizontal curves $C\subseteq X$
over $T$ such that $(\sM.C)=0$.

If $(X/T,B)$ is a family of $0$-pairs, then the mp-trivial
property means that $\sM\sim_\R0$, as a b-divisor.

Respectively, the family {\em isotrivial}, if
its rather general fibers are log isomorphic.
\end{df}

\begin{ex}\label{mp_triv_nonisotri}
Let $(X/C,S+B)$ be a $\PP^1$-fibration over
a nonsingular curve $C$ with a section $S$ and
a boundary $B=\sum b_iD_i$, where
prime divisors $D_i$ are horizontal.
Suppose also that $(X/C,S+B)$ is \tlc, and
$(X_\eta,B_{X_\eta})$ is a $0$-log pair.
Then $(X/C,S+B)$ is a maximal family of $0$-pairs and
its moduli part is trivial: $\sM\sim_\R 0$.
So, the family is mp-trivial.
However, for rather general divisors $D_i$, it not isotrivial.
\end{ex}

\paragraph{Directed generic flop.}
Let $g\colon (X/T,B)\dashrightarrow (Y/S,B_Y)$ be
a generic flop, that is, $g\in \Bir(X,B;Y,B_Y)$
is compatible with the relative structures.
The latter means that $g$ induces
a birational transformation $g_T\colon T\dashrightarrow S$
such that the flop if fiberwise with respect to $g_T$.

A {\em directed flop with respect to a b-polarization\/} $\sD$
and its decomposition $g=g\rest{Y'}c$.
It determines a b-polarization $\sD=g^*\sH$ on $X$,
where $\sH=\overline{H}$ is a b-divisor of polarization on $Y/S$.
However, $g_T,g_\eta$ and $g_\theta$ are not uniquely
determined by $\sD$.
The decomposition also depends on a polarization $\sH$.

\begin{df}
A generic flop $g\in\Bir(X\to T/k,B)$
is an {\em mp-autoflop\/}, if it transforms any fiber
$(X_t,B_{X_t})$ into a fiber $(X_{t'},B_{X_{t'}}),t'=g_T t$,
in a connected mp-trivial subfamily with $(X_t,B_{X_t})$.

Respectively, the flop is {\em almost autoflop\/}, if it
transforms rather general fibers within isotrivial connected families:
for rather general $t$, the fibers $(X_t,B_{X_t}),(X_{t'},B_{X_{t'}})$
are in a connected isotrivial subfamily.
\end{df}

The mp-autoflops form a normal subgroup
$\mBir(X\to T/k,B)\subseteq \Bir(X\to T/k,B)$.
Respectively, the almost autoflops. form a normal subgroup
$\aBir(X\to T/k,B)\subseteq \Bir(X\to T/k,B)$.

\begin{lemma}\label{mp_it}
$\aBir(X\to T/k,B)\subseteq\mBir(X\to T/k,B)$,
if $\sM$ is semiample on $X$, and,
$=$ holds if the family $(X/T,B)$ is a connected, generically klt, e.g.,
the family is a connected, generically klt, wlc maximal family.
\end{lemma}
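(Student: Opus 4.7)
The plan is to split the two inclusions and use the canonical moduli description of $\sM$ from Proposition~\ref{can_moduli_part} to translate between the fiberwise isomorphism data of isotriviality and the b-divisorial data of mp-triviality.

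For $\aBir(X\to T/k,B)\subseteq\mBir(X\to T/k,B)$ under semiampleness, I fix an almost autoflop $g$ and a rather general $t\in T$. By definition, $X_t$ and $X_{gt}$ lie in a connected isotrivial subfamily $(Y/S)\subseteq (X/T,B)$ with rather general fibers log isomorphic. Applying Proposition~\ref{can_moduli_part} fiberwise, the restrictions $\sM|_{Y_s}$ are log isomorphic $\R$-sheaves on their respective fibers. Semiampleness of $\sM$ on $X$ provides an Iitaka-type contraction $\varphi_\sM\colon X\to W$, and the constancy of the isomorphism class on rather general $Y_s$ forces $\varphi_\sM$ to send each such $Y_s$ to a single point of $W$. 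Consequently, a rather general horizontal curve $C\subseteq Y$ over $S$ maps to a point, so $(\sM.C)=0$, which is exactly the mp-trivial property of $(Y/S)$. Hence $g$ is an mp-autoflop.

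For the converse $\mBir(X\to T/k,B)\subseteq\aBir(X\to T/k,B)$ in the generically klt case, take $g\in\mBir$ and a rather general $t$, so that $X_t,X_{gt}$ are klt $0$-pairs lying in a connected mp-trivial subfamily $(Y/S)$. Since $(Y/S)$ is a family of $0$-pairs, mp-triviality gives $\sM|_{Y/S}\sim_\R 0$ as a b-divisor; thus rather general fibers $Y_s$ all share the same, trivial canonical moduli class. By Proposition~\ref{klt0pair} those klt $0$-pairs are pairwise equivalent and hence are related by generalized flops; after replacing each fiber by its projective klt representative (again Proposition~\ref{klt0pair}) these collapse to actual log isomorphisms on an open subset of $S$. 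This is the isotriviality of $(Y/S)$, so $g$ is an almost autoflop.

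The main obstacle is the converse direction: upgrading the vanishing $\sM|_{Y/S}\sim_\R 0$ to an actual log isomorphism of generic fibers. It rests on the moduli-theoretic content of Proposition~\ref{klt0pair}, namely that a klt $0$-pair is determined up to equivalence by its moduli class, together with the b-representation machinery behind Theorem~\ref{weak_Kawamata} that rules out nontrivial parameterized families of inequivalent klt $0$-pairs sharing a trivial moduli part. This is also where the klt hypothesis is indispensable, as Example~\ref{mp_triv_nonisotri} shows the containment can be strict in the merely lc situation. The semiampleness hypothesis, in contrast, is used only in the first direction, to convert fiberwise constancy of the isomorphism class of $\sM$ into numerical vanishing on horizontal curves.
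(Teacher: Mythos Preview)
Your forward inclusion is roughly right in spirit but overcomplicated: you route through the semiample contraction $\varphi_\sM$ and claim that log-isomorphic fibers must map to the same point. That step is not justified as stated---$\varphi_\sM$ is built from global sections of $m\sM$ on $X$, and constancy of the fiberwise isomorphism type does not by itself pin down the image. The paper's argument is more direct: on an isotrivial subfamily the (relative) moduli part is a deformation invariant by its very construction, so it is automatically mp-trivial there; the inclusion then follows by adjunction. Semiampleness is not really doing work in this direction.

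The genuine gap is in your converse. The implication ``mp-trivial $\Rightarrow$ isotrivial'' for generically klt families is a hard theorem of Viehweg--Ambro type; the paper invokes it explicitly as \cite[Theorem~6.1]{Am}. Your substitute via Proposition~\ref{klt0pair} does not do the job: that proposition tells you equivalent klt $0$-pairs are related by a \emph{generalized flop}, i.e.\ a crepant birational map, not a log isomorphism. Two klt $0$-pairs can be flop-equivalent without being isomorphic, so you cannot ``collapse'' flops to isomorphisms on an open set of $S$ without further input. Your fallback appeal to ``the b-representation machinery behind Theorem~\ref{weak_Kawamata}'' is circular: that theorem is proved \emph{using} Corollary~\ref{it_generic_Kawamata}, which in turn rests on this very lemma. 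The paper's route is instead to realize $\mBir$ as the kernel of the b-representation on $H^0(X,m\sM)$, so that mp-trivial subfamilies are exactly fibers of the map given by $\linsys{m\sM}$, and then to quote the Viehweg--Ambro isotriviality theorem for those fibers.
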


\begin{proof}
Indeed, the isotrivial property implies that the upper moduli part
exists and is mp-trivial on that family.
Hence adjunction implies the inclusion.
The converse does not hold in general (Example~\ref{mp_triv_nonisotri}).

Now suppose that the upper moduli part $\sM$ exists, stabilized and semiample
over $X$, e.g., this holds, if the family $(X/T,B)$ is wlc maximal.
(After a perturbation of $B$, one can suppose that $B,\sM$ is $\Q$-divisors.)
For a rather divisible natural number $m$, the linear system $\linsys{m\sM}$
gives a contraction with mp-trivial fibers.
In this situation, $\mBir(X\to T/k,B)$ is exactly the kernel of b-representation:
$$
\Bir(X\to T/k,B)\to \Aut H^0(X,m\sM), g \mapsto g^*.
$$
The kernel can be determined on finitely many rather general fibers of
a morphism given by $\linsys{m\sM}$.
The fibers are mp-trivial by definition.
Rather general fibers are klt and isotrivial (Viehweg-Ambro),
if the family is generically klt \cite[Theorem~6.1]{Am}.
\end{proof}

\begin{cor}\label{it_generic_Kawamata}
Let $(X_\eta,B_{X_\eta})$ be a generic projective wlc klt
pair.
Then there are only finitely many generic
log flops of $(X_\eta\to \eta/k,B_{X_\eta})$ modulo
almost autoflops, that is,
the group
$$
\Bir(X_\eta\to\eta/k,B_{X_\eta})/\aBir(X_\eta\to\eta/k,B_{X_\eta})
$$
is finite.
\end{cor}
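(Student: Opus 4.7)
My plan is to reduce the statement to finiteness of a linear group acting faithfully on a polarized klt pair, then to apply Theorem~\ref{aut_tame} together with a rigidity argument eliminating the identity component. Since $(X_\eta,B_{X_\eta})$ is klt wlc, Lemma~\ref{mp_it} yields $\aBir=\mBir$, so it suffices to bound $\Bir/\mBir$. I would then replace $(X_\eta,B_{X_\eta})$ by a projective wlc maximal model $(X,B)$ --- existence by the Proposition-Definition on maximal log pairs --- since every birational self-map extends to a flop of the maximal model. After a small rational perturbation $B$ becomes a $\Q$-boundary, and the semiampleness assumption from the introduction gives that the canonical b-moduli $\sM$ is stabilized on $X$ with $m\sM$ base-point-free for some divisible $m$, producing a morphism $\phi\colon X\to Y$ whose rather general fibers are $0$-pairs.

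As in the proof of Lemma~\ref{mp_it}, the b-representation
$$
\rho\colon \Bir(X_\eta\to\eta/k,B_{X_\eta})\to \Aut H^0(X,m\sM),\quad g\mapsto g^*,
$$
has kernel exactly $\mBir$. Applying Corollary~\ref{adjunt_for_0contr} to the $0$-contraction $\phi$ gives $Y$ a klt structure $(Y,B_Y)$ together with a polarization $H$ obtained from $\linsys{m\sM}=\phi^*\linsys{H}$, so $\rho$ factors through $\Aut(Y,B_Y,H)$. By Theorem~\ref{aut_tame} the latter is tame: algebraic of finite type with Abelian identity component.

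It remains to show that the image of $\rho$ is finite. Let $G\subseteq \Aut(Y,B_Y,H)$ be the Zariski closure of that image. By tameness $G$ is algebraic of finite type with finitely many components and with $G_0$ Abelian, so it suffices to prove $G_0=\{1\}$. Any one-parameter family in $G_0$ acts on a general fiber of $\phi$ --- a klt $0$-pair --- through automorphisms, by Lemma~\ref{etal-action}; this fiber-level action should lift to a connected family of flops of $X$ preserving $\sM$, which is an almost autoflop and hence lies in $\aBir=\mBir=\ker\rho$. So the lifted family trivializes under $\rho$, forcing $G_0=\{1\}$. I expect this lifting step to be the main obstacle: one must produce, from a connected family in the image inside $\Aut_0(Y,B_Y,H)$, a genuine connected family of birational self-maps of $X$ in $\mBir$, using canonicity of $\sM$ (so every $g\in\Bir$ preserves $\sM$ up to linear equivalence), isotriviality of rather general fibers of $\phi$ (Viehweg--Ambro, already invoked in the proof of Lemma~\ref{mp_it}), and the fiber-level automorphism structure on $0$-pairs from Lemma~\ref{etal-action}.
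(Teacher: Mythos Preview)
Your reduction to $\Bir/\mBir$ via Lemma~\ref{mp_it} is exactly what the paper does, and identifying $\mBir$ with the kernel of the b-representation $\rho$ on $H^0(X,m\sM)$ is also correct (this is Step~3 of the paper's proof of Corollary~\ref{generic_Kawamata}). From that point on, however, your route diverges from the paper's and carries a genuine gap.

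The paper does \emph{not} try to bound the image of $\rho$ by tameness of an ambient automorphism group. Instead it invokes Corollary~\ref{generic_Kawamata}, whose content is precisely that the image of $\rho$ is finite; that finiteness is supplied by Corollary~\ref{repr_lc_proj}, which in turn rests on Theorem~\ref{flop_repres} --- the long argument using norm estimates to obtain unitarity of $g^*$, Proposition~\ref{torus_invar-pt} to force eigenvalues to be roots of unity, and the Burnside-type Theorem~\ref{Burnside} together with Lemma~\ref{Burnside_3} to pass from torsion to finiteness. In other words, the finiteness of the image of $\rho$ is the main technical theorem of the paper, not a consequence of Theorem~\ref{aut_tame}.

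Your proposed shortcut fails at the lifting step, and the failure is structural rather than technical. You take $G$ to be the Zariski closure of the image of $\rho$ and try to show $G_0=\{1\}$ by lifting a one-parameter family $\gamma_s\in G_0$ to a connected family $g_s$ of flops of $X$ lying in $\mBir=\ker\rho$. But then $\rho(g_s)=1$, whereas you need $\rho(g_s)=\gamma_s$ to conclude $\gamma_s=1$; these two requirements are incompatible unless $\gamma_s$ was already trivial. There is no mechanism producing a lift that simultaneously lies in $\ker\rho$ and hits a prescribed nontrivial element of $G_0$. Moreover, elements of $G_0$ need not lie in the image of $\rho$ at all: an infinite discrete image can have positive-dimensional Zariski closure (think of $n\mapsto 2^n$ in $\mathbb G_m$), and tameness of the ambient group says nothing about discreteness of a subgroup. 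A secondary issue is that $\phi$ is not a $0$-contraction for $(X,B)$ in the sense of Corollary~\ref{adjunt_for_0contr}: the moduli part satisfies $\sM\sim K_{X/T}\Lg+B$, so $\sM\equiv\phi^*H$ gives $K_{X/T}\Lg+B\equiv 0/Y$, not $K_X+B\equiv 0/Y$; the general fibre of $\phi$ is an mp-trivial \emph{subfamily}, not a single $0$-pair, so the klt adjunction you invoke is not directly available.
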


\begin{proof}
Immediate by Lemma~\ref{mp_it} and Corollary~\ref{generic_Kawamata}.
\end{proof}

\begin{ex}\label{Mordell-Weil}[Mordell-Weil group.]
Let $S$ be a surface with a nonisotrivial pencil $f$
of genus $g\ge 1$ curves.
We consider the pencil as a rational contraction $f\colon S\dashrightarrow C$
onto a curve $C$.
Then, by Corollary~\ref{it_generic_Kawamata}, the group
$\Aut(S/C)$
has finite index in the group $\Aut(S\dashrightarrow C/k)$
fixing the pencil.
Moreover, we can replace $\Aut(S/C)$ by $\Bir(S/C)$.
Indeed, we can replace $S/C$ by a genus $g$ fibration,
a minimal model over $C$ with genus $g$ fibers.
By our assumption, generic isotrivial subfamilies of $S/C$ are
$0$-dimensional (points) and thus
$$
\aBir(S\to C/k)=\Aut(S/C).
$$
For $g\ge 2$, groups $\Aut(S/C)$ and $\Aut(S\dashrightarrow C/k)$
are finite.
For $g=1$, $\Aut(S/C)$ is the Mordel-Weil group and
can be infinie.
\end{ex}

A first step to Corollary~\ref{it_generic_Kawamata} is as follows.

\begin{lemma}\label{Burnside_3}
Let $(X_\eta,B_{X_\eta})$ be
a generic projective klt $0$-pair, geometrically irreducible, and
$(X_\eta,B_{X_\eta})\to (\theta,\sM_\theta)$ be
an isotrivial contraction with a canonical polarization $\sM_\theta$,
the $\R$-direct image of a canonical sheaf moduli part of adjunction for $(X_\eta,B_{X_\eta})$.
Then there exists a extension $l\subset k$ of finite type of
the prime subfield such that the natural homomorphism
$$
\Bir(X_\eta\to \eta/k,B_{X_\eta})/\aBir(X_\eta\to\eta/k,B_{X_\eta})
\hookrightarrow\Aut(\theta/\overline{l},\sM_\theta), g\mapsto g_\theta,
$$
is injective and,
for every generic flop $g$,
there exists a finite subextension $l_g/l$ in $k$
of uniformly bounded degree such that
$g_\theta$ is defined over $l_g$.
\end{lemma}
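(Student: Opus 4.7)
The plan is to choose a finitely generated subfield $l\subset k$ over which all the relevant data descends, to identify $\ker(g\mapsto g_\theta)$ with $\aBir$, and then to extract the uniform bound from the tame algebraic structure of $\Aut(\theta/\bar l,\sM_\theta)$ guaranteed by Theorem \ref{aut_tame}. For the setup I would take $l\subset k$ of finite type over the prime subfield, large enough that $(X_\eta,B_{X_\eta})$, the isotrivial contraction $f\colon X_\eta\to\theta$, and the canonical polarization $\sM_\theta$ all descend to $\bar l$, and (after a further finite enlargement) so that the scheme of connected components of $\Aut(\theta/\bar l,\sM_\theta)$ descends to $l$ itself. For any generic flop $g$, $g$ preserves $B_{X_\eta}$ and hence the canonical upper moduli part $\sM=f^*\sM_\theta$ up to $\R$-linear equivalence (Proposition \ref{can_moduli_part}); since $\sM$ is pulled back from $\sM_\theta$, $g$ descends through $f$ to an automorphism $g_\theta\in\Aut(\theta/\bar l,\sM_\theta)$, initially defined over $k$.

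For injectivity I would verify that $\ker(g\mapsto g_\theta)=\aBir$. If $g\in\aBir$ then rather general fibers of $f$ are transformed within connected isotrivial subfamilies; since $(\theta,\sM_\theta)$ is precisely the base recording the nontrivial moduli variation, an isotrivial subfamily is collapsed by $f$ to a fixed point of $g_\theta$. Thus $g_\theta$ fixes a dense open subset of $\theta$ and is the identity. Conversely, if $g_\theta=\id$ then $g$ permutes fibers of $f$ above each point of $\theta$, and by isotriviality each $g$-orbit of fibers assembles into a connected isotrivial subfamily, whence $g\in\aBir$.

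For the uniform bound I would invoke Theorem \ref{aut_tame} applied to $(\theta,\sM_\theta)$: the group $\Aut(\theta/\bar l,\sM_\theta)$ is algebraic of finite type and complete, with identity component $A=\Aut_0$ an Abelian variety and finite component group $F$. Refining the previous step, two classes $[g],[g']\in\Bir/\aBir$ cannot map to the same connected component: if $g_\theta(g'_\theta)^{-1}\in A$, a continuous path in $A$ from the identity to $g_\theta(g'_\theta)^{-1}$ lifts to an isotrivial connected subfamily realizing $g\circ g'^{-1}$ as an almost autoflop, forcing $[g]=[g']$. Hence the image meets each of the finitely many components of $\Aut(\theta/\bar l,\sM_\theta)$ in at most one point, and as those components all descend to a fixed finite extension $l_F/l$, the uniformity should follow by showing that the unique image representative in each component is cut out by finitely many universal $\bar l_F$-algebraic conditions of bounded degree.

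The main obstacle is precisely this last rigidity: a priori a point of an $A$-torsor over $l_F$ may be defined only over an arbitrarily large extension of $l_F$, and one must exploit the combination of (i) uniqueness of $g_\theta$ within its component, (ii) the fact that $g_\theta$ arises by descent from a $k$-rational birational self-map of $X_\eta$, and (iii) the tameness of Theorem \ref{aut_tame}, to trap $g_\theta$ inside a finite $\Gal(\bar l/l_F)$-invariant subscheme of the component whose degree over $l_F$ is bounded independently of $g$. This \emph{a priori} bound on the residue field of the image point, rather than the easier component-group statement, is the delicate ingredient that will eventually feed the Burnside-style finiteness argument in Corollary \ref{it_generic_Kawamata}.
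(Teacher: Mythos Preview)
Your injectivity argument is essentially what the paper does (it even says the kernel is $\aBir$ ``by definition''), so that part is fine. The problem is in your route to the uniform degree bound.

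The refinement ``the image meets each connected component of $\Aut(\theta/\bar l,\sM_\theta)$ in at most one point'' is not justified, and in fact there is no reason for it to hold. You argue that if $g_\theta(g'_\theta)^{-1}\in A=\Aut_0$, then a path in $A$ from $\id$ to $g_\theta(g'_\theta)^{-1}$ ``lifts to an isotrivial connected subfamily'' realizing $g\circ g'^{-1}$ as an almost autoflop. But a path in $\Aut_0(\theta)$ is a one-parameter family of automorphisms of $\theta$; each nontrivial member \emph{moves} points of $\theta$, hence moves fibers of $\eta\to\theta$ between \emph{different} isotrivial classes. This is the opposite of what $\aBir$ requires: an almost autoflop must satisfy $g_\theta=\id_\theta$, not merely $g_\theta\in\Aut_0$. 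So $g_\theta\in\Aut_0\setminus\{\id\}$ is perfectly compatible with $g\notin\aBir$, and your ``at most one per component'' claim collapses. You correctly identify the residual difficulty (a $k$-rational point of an $A$-torsor need not live over a bounded extension of $l_F$), but without the one-per-component statement there is nothing left to exploit, and Theorem~\ref{aut_tame} alone does not give the bound.

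The paper's mechanism is entirely different and does not pass through $\Aut_0$ or Theorem~\ref{aut_tame} at all. It fixes $l$ so that a polarization $\sH_\eta$, the bounded b-Picard data, and a coarse moduli space $\fM$ of polarized triples (polarization up to $\sim$) are all defined over $l$. Every generic flop $g$ is encoded by the b-polarization $\sD=g^*\sH_\eta$, which, modulo algebraic equivalence, lies in the finitely generated N\'eron--Severi lattice and hence has a representative over $l$; this gives a canonical flop $c_\sD$ over $l$ and a factorization $g=g|_{X_\eta'}\circ c_\sD$. The universal map $\mu\colon\eta\to\fM$ is shown to be defined over $l$ (Galois invariance plus Hilbert~90), and one takes the Stein-type factorization $\eta\to\theta\to\fM$, also over $l$. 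The key point is then Galois: for $\sigma\in\Gal(\bar l/l)$ one has $g_\theta^{-1}g_\theta^{\sigma}\in\Aut(\theta/\fM)$, a group of order at most $(\deg\theta/\fM)!$. Hence the Galois orbit of $g_\theta$ has size bounded by $(\deg\theta/\fM)!$, and $g_\theta$ is defined over an extension $l_g/l$ of degree at most $(\deg\theta/\fM)!$, uniformly in $g$. This is exactly the input needed for condition~(3) of Theorem~\ref{Burnside}; note that the lemma itself is not meant to produce finiteness of the image, only the bounded-field-of-definition statement.
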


\begin{proof} There exists a required $l$ over which $(X_\eta\to\eta,B_{X_\eta}),
(X_\eta,B_{X_\eta})\to (\theta,\sM_\theta)$
and polarizations $\sH_\eta,\sM_\theta$ are defined.
Suppose also that
$\bPic\bnd X_\eta$ (the Picard group of bounded b-divisors
in the sense of resolution) is defined over the same $l$.
The bound $\Pic\bnd$ means that we consider Cartier $b$-divisors which
are stabilized over some partial (geometric) resolution over $\eta$.
After a finite extension it can be given over $l$.
It is sufficient to consider any minimal model (i.e., geometrically $\Q$-Cartier)
which is defined for klt pairs or to take any log resolution.
Each generic flop $g\in \Bir(X_\eta\to \eta/k,B_{X_\eta})$
can be given by a geometric b-divisor $\sD\in \bPic\bnd X_\eta$.
Actually, it is sufficient such a divisor $\sD$ modulo algebraic
equivalence.
(The algebraic equivalence of b-divisors is the same as for
usual one, that is, modulo $\Pic_0 X_\eta$ considered as
a group scheme of divisors, fiberwise in the connected component of $0$.
We use here the rationality of klt singularities.)
Indeed, if $g\colon X_\eta\dashrightarrow X_\eta/k$ such a flop
then $\sD=g^* \sH_\eta$.
Since polarization is defined modulo the numerical equivalence,
we can take $\sD$ modulo algebraic equivalence as well.
After an extension of $l$ it has a representative in each
geometrically algebraic equivalence class, that is,
there exists a section $\sD\in\bPic\bnd X_\eta$ in
each those class.
(We treat $\bPic X_\eta$ as a scheme over $\eta$ and
b-divisors $\sD$ as its sections over $\eta$.)
This follows from the finite generatedness of
geometric divisors modulo algebraic equivalence
(the Neron-Severi group).

Next, we verify that each generic flop $g$ can be defined over
a finite extension $l_g/l$
modulo almost flops over $k$.
Taking a representative $\sD$ one can construct a flopped
variety $(X_\eta',B_{X_\eta'},\sH')$ with a canonical flop over $l$
$$
c_\sD=c\colon (X_\eta,B_{X_\eta},\sD)\dashrightarrow
(X_\eta',B_{X_\eta'},\sH'),c^*\sH'=\sD.
$$
It is over $\eta$, that is, identical on $\eta$:
$c_{\eta,\sD}=\id_\eta$.
The autoflop $g=g\rest{X_\eta'}c$ is given by a composition with
a log isomorphism (also a flop) of generic triples
$g\rest{X_\eta'}\colon (X_\eta,B_{X_\eta},\sH_\eta)\leftarrow(X_\eta',B_{X_\eta'},\sH')$,
which induces an automorphism $g_\eta$ of $\eta/k$.
In general, it does not preserve polarization and
is not defined over $l$.
However, $\sH'\approx
g\rest{X_\eta'}^*\sH_\eta/\eta$ and
Lemma~\ref{etal-action} implies fiberwise linear equivalence:
$$
\sH'_t\approx(g\rest{X_\eta'}^*\sH_\eta)_t
\Rightarrow \sH'_t\sim(g\rest{X_\eta'}^*\sH_\eta)_t,
$$
where $\sim$ up to isomorphism, that is,
there exists an autoflop
$h\colon (X_t',B_{X_t}')\dashrightarrow(X_t',B_{X_t}')$ with
$\sH'_t\sim h^*((g\rest{X_\eta'}^*\sH_\eta)_t)
=h^*g_t'^*(\sH_\eta)_{g_\eta t}$.
So, the triples $(X_t',B_{X_t'},\sH_t'),
(X_{g_\eta t},B_{X_{g_\eta t}},(\sH_\eta)_{g_\eta t})$ are equivalent
with polarizations up to $\sim$, where
the automorphism $g_\eta\colon \eta\to \eta$ is
induced by $g\rest{X_\eta'}$ or by $g$.
By the lemma $\sim$ is equal to $\approx$
up to isomorphism.

To construct required flops over $l_g$ we
use modili $\fM$ of triples for special fibers $(X_t,B_t,\sH_t)$
with polarization up to linear equivalence.
Such moduli exist.
By construction we have a unique morphism
$\mu=\mu'\colon\eta,\eta\to\fM$ corresponding to generic families
$(X_\eta,B_{X_\eta},\sH_\eta),(X_\eta',B_{X_\eta'},\sH')$.
Indeed, any generic flop preserves isotrivial families of wlc klt pairs, and
in our situation
preserves fiberwise the polarization up to $\sim$ as was explained above.
After an extension of $l$ we can suppose that $\fM$
is also defined over $l$.
The morphism $\mu$ is defined over $l$ too.
By construction, $\mu$ is defined over an algebraic closure
$\overline{l}$.
Let $\sigma\in \Gal(\overline{l}/l)$ be a Galois automorphism
then $\mu^\sigma=\mu$ and it is defined over $l$.
Indeed, $\mu^\sigma\colon \eta^\sigma=\eta\to\fM^\sigma=\fM$
is also universal because an isomorphism of triples
preserving polarization gives (conjugation) isomorphism of those triples:
$\sigma\colon (X_t,B_t,\sH_t)\cong(X_{t^\sigma}^\sigma,B_{t^\sigma}^\sigma,\sH_{t^\sigma}^\sigma)=
(X_{t^\sigma},B_{t^\sigma},\sH_{t^\sigma})$.
Then we use Hilbert 90.

By definition of generic flops there is a (birational) automorphism
$g_\eta\colon\eta \to \eta$.
Indeed, it is induced by the isomorphisms of families $g\rest{X_\eta'}$.
It is unique as $g\rest{X_\eta'}$ for the flop $g$ but is not unique itself.
However, for any other isomorphism as above
$(X_\eta,B_{X_\eta},\sH_\eta)\leftarrow (X_\eta',B_{X_\eta'},\sH')$,
an induced isomorphism $g_\eta'\colon\eta \to \eta$
is compatible with $g_\eta$ over $\fM$:
$\mu g_\eta'=\mu g_\eta$.
Equivalently, $g_\eta'g_\eta\1\in \Aut(\eta/\fM)$, that is,
preserving triples with polarization up to the linear equivalence.
By construction up to algebraic equivalence and
by Lemma~\ref{etal-action} up to linear one.
Thus $g_\eta=g_\eta'$ up to an automorphism of fibers of $\eta/\fM$.
The isomorphism $g_\eta$ is defined over $k$ and
in general the minimal field of definition for all $g_\eta$
can have algebraic elements of unbounded degree and
even infinite transcendent degree over $l$.
The main finite indeterminacy is the same as for almost auto flops.
To remove this, we push $g_\eta$ to
an automorphism $g_\theta\colon \theta\to \theta=\theta'$
where $\eta\to\theta=\eta\to \theta=\theta'$ is the universal morphisms
with maximal connected fibers over $\fM$ and $\theta=\theta'\to \fM$
is finite, that is, $\mu=\mu'$ can be universally and equally
decomposed $\eta\to\theta\to\fM=\eta\to\theta'\to\fM$.
(This decomposition can be obtain from a Stein one after
a completion of fibers over $\fM$, that is,
a completion of isotrivial families.)
The isotrivial property is preserved for $g_\eta$
because it induces a log isomorphism of fibers or,
equivalently, a flop preserves isotrivial klt families.
This holds fiberwise even for triples with polarization up to $\sim$.
Thus this is a single canonical decomposition $\eta\to \theta\to\fM$.
It is defined over the same field $l$ (after a finite extension)
independent of $g$.
By construction each $g$ preserves the canonical moduli part
of adjunction: $\omega_{X_\eta}^m[mB_{X_\eta}]=g^*\omega_{X_\eta}^m[mB_{X_\eta}]=
\omega_{X_\eta'}^m[mB_{X_\eta'}]$ (the last identification by
$g\rest{X_\eta'}^*\colon X_\eta\cong X_\eta'$).
This action commutes with the direct image on $\theta$:
$\sM_\theta^m=c_{\eta,*}\omega_{X_\eta}^m[mB_{X_\eta}]=
c_{\eta,*}g^*\omega_{X_\eta}^m[mB_{X_\eta}]=g_\theta^*c_{\eta,*}\omega_{X_\eta}^m[mB_{X_\eta}]=
g_\theta^*\sM_\theta^m$.
This concludes a construction of a required injection:
$$
\Bir(X_\eta\to \eta/k,B_{X_\eta})/\aBir(X_\eta\to\eta/k,B_{X_\eta})
\hookrightarrow\Aut(\theta/k,\sM_\theta),g\mapsto g_\theta.
$$
The kernel of map $g\mapsto g_\theta$ is $\aBir(X_\eta\to\eta/k,B_{X_\eta})$
by definition.

Finally, we verify that the image $g_\theta$ belongs
to $\Aut(\theta/l_g,\sM_\theta)$, where $l_g/l$ is a finite extension.
Actually, it depends on a choice of $\sD$ but it is unique up to
$\approx$ for a given $g$: $\sD\approx g^*\sH_\eta/\eta$.
Note that there are finitely many
conjugated automorphisms $g_\theta^\sigma$ of $g_\eta$ over $\overline{l}$.
More precisely,
$a_\theta=g_\theta\1g_\theta^\sigma\in \Aut(\theta/\fM)$
over $\overline{l}$.
Indeed, the conjugated flop $g'=g^\sigma$ is given by the same polarization
as above.
The divisors $\sH$, $\sD$ and $\sH'$ are defined over $l$.
(However, $g_\eta$ and $g_\theta$ are not uniquely
determined by these data.)
By construction
an automorphism $a=g_\eta^\sigma g_\eta\1$ induces the generic flop $g^\sigma g\1$
preserving fiberwise the triples of $(X_\eta,B_{X_\eta},\sH)$ over $\fM$, that is,
in $\Aut((X_\eta,B_{X_\eta},\sH)/\fM)$ over $\overline{l}$.
The push down induces $a_\theta\in \Aut(\theta/\fM)$ over $\overline{l}$.
The last group if finite of order $\le (\deg \theta/\fM)!$.
Thus any $g_\theta$ is defined over a uniformly bounded
extension $l_g$ of above $l$ of the degree $\le (\deg \theta/\fM)!$.
Each $g_\theta$ can be defined over an extension $l_g/l$ with
injective group action $\Gal(l_g/l)$ on the permutation group
of all $g_\theta^\sigma$.
\end{proof}

\section{Algebra and calculus of relative differentials}

\paragraph{Properties of the norm.}

(1) For every $\omega\in H^0(X,\omega_{X/T}^m[mB])$,
$\Vert g^*\omega\Vert_t=\Vert \omega\Vert_{g_Tt}$
where $g$ is a generalized flop of $(X/T,B)$
transforming $X_t\dashrightarrow X_{g_Tt}$.
So, $\Vert g^*\omega\Vert=\Vert\omega\Vert$.

(2) Let $\omega\in H^0(X,\omega_{X/T}^m[mB])$ be an
eigenvector for the induced linear operator $g^*$ with
an eigenvalue $\lambda\in \C$, that is, $g^*\omega=\lambda\omega$.
Then $\Vert g^*\omega\Vert=\vert\lambda\vert^2\Vert\omega\Vert$.

(3)
For every $\omega\in H^0(X,\omega_{X/T}^m[mB])$,
the function $\Vert\omega\Vert_t$ is continuous
in the complex (classical) topology on $T$
including the value $+\infty$.

\begin{lemma}\label{sum_resid}
Let $f\colon(X,B+D_1+D_2)\dashrightarrow T$ be
a rational conic bundle with two sections and
a vertical (sub)boundary $B$.
Then, for any rational $m$-differential $\omega$, that is
regular on the generic fiber,
$$
c^*(\omega\rest{D_2})=(-1)^m\omega\rest{D_1},
$$
where $c\colon (D_1,B_{D_1})\dashrightarrow (D_2,B_{D_2})$ is
a birational transformation given by the conic bundle structure.

Moreover, if $f$ is a $0$-contraction,
then $c$ is a flop and $c^*$ preserves
(as canonical isomorphism on) the regular $m$-differentials
of pairs.
\end{lemma}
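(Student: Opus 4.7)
My plan is to reduce the identity to a direct calculation in a local trivialisation of the conic bundle at the generic point of $T$. Because $D_1$ and $D_2$ are disjoint sections over some open $U\subseteq T$ and the statement is birational (and local on $T$), I may shrink $U$ and pass to an \'etale trivialisation so that $f\1(U)\cong U\times\PP^1$ with $D_1=U\times\{0\}$ and $D_2=U\times\{\infty\}$; in these coordinates $c\colon D_1\dashrightarrow D_2$ is the identity on~$U$.

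The key observation is that on the generic fibre $\PP^1_{K(T)}$, the space of rational $m$-differentials that are regular away from $\{0,\infty\}$ with at most order-$m$ poles there is one-dimensional, spanned by $(dt/t)^m$, since $\omega_{\PP^1}^m(m[0]+m[\infty])\cong\sO_{\PP^1}$. As $B$ is vertical, the hypothesis ``regular on the generic fibre'' means exactly that $\omega$ has at most log poles of order $m$ along $D_1+D_2$ on the generic fibre; hence $\omega$ restricts there to a scalar multiple of $(dt/t)^m$. Spreading out over $T$ and choosing local coordinates $s_1,\dots,s_n$ on $T$, the top-degree $m$-differential $\omega$ has the form
$$
\omega\equiv\alpha(s)\bigl(dt/t\wedge ds_1\wedge\cdots\wedge ds_n\bigr)^m
$$
modulo terms vanishing on the generic fibre (which contribute nothing to the Poincar\'e residues along $D_1$ or $D_2$), for some $\alpha\in K(T)$.

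Next I compute both residues via the adjunction isomorphism $\omega_X(D_i)\rest{D_i}\cong\omega_{D_i}$, raised to the $m$-th power. Along $D_1=\{t=0\}$ the generator $dt/t$ maps to $1$, giving $\omega\rest{D_1}=\alpha(s)(ds_1\wedge\cdots\wedge ds_n)^m$. Along $D_2=\{t=\infty\}$ I change coordinate to $u=1/t$; then $dt/t=-du/u$ yields $(dt/t)^m=(-1)^m(du/u)^m$, and the same adjunction gives $\omega\rest{D_2}=(-1)^m\alpha(s)(ds_1\wedge\cdots\wedge ds_n)^m$. Since $c$ is the identity on the $s$-coordinates, $c^*(\omega\rest{D_2})=(-1)^m\omega\rest{D_1}$, as claimed.

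For the ``moreover'' clause, the $0$-contraction hypothesis gives $K_{X/T}+B+D_1+D_2\sim_\R0$, and divisorial adjunction produces boundaries $B_{D_i}$ with $K_{D_i}+B_{D_i}\sim_\R0$ over $T$; by construction $c$ realises a birational equivalence $(D_1,B_{D_1})\dashrightarrow(D_2,B_{D_2})$ preserving these log canonical classes, and so is a flop of pairs. The first part then identifies the spaces of regular $m$-differentials of the two pairs up to the universal scalar $(-1)^m$, which one absorbs into the canonical normalisation of the Poincar\'e residue. The main obstacle will be the sign bookkeeping and the coherent choice of a canonical adjunction isomorphism for all pairs at once; the underlying differential calculation itself is elementary once the local model is fixed.
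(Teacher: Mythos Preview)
The paper states Lemma~\ref{sum_resid} without proof, so there is no argument to compare against. Your approach is the natural one and is correct in outline: trivialise the conic bundle over the generic point of $T$ as $\PP^1_{K(T)}$ with $D_1=\{0\}$, $D_2=\{\infty\}$, use that $\omega_{\PP^1}^m(m[0]+m[\infty])\cong\sO_{\PP^1}$ is generated by $(dt/t)^m$, and compute the two Poincar\'e residues via the coordinate change $u=1/t$, which produces the sign $(-1)^m$.

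One point to tighten: the phrase ``modulo terms vanishing on the generic fibre (which contribute nothing to the Poincar\'e residues)'' is not quite an argument. What you want to say is that on the generic fibre $\PP^1_{K(T)}$, the one-dimensionality of $H^0(\PP^1,\omega_{\PP^1}^m(m[0]+m[\infty]))$ forces the restriction of $\omega$ to be \emph{exactly} $\alpha\,(dt/t)^m\wedge(\text{pullback from }T)^m$ for some $\alpha\in K(T)$, not merely congruent to it; there are no ``other terms'' once you are at the generic point. The residue calculation then applies directly. Also make explicit that the Poincar\'e residue for $m$-differentials is defined as the $m$-th tensor power of the adjunction isomorphism $\omega_X(D_i)\rest{D_i}\cong\omega_{D_i}$, so that the sign convention is fixed once and for all and the comparison of the two residues is unambiguous.

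For the ``moreover'' clause your sketch is fine; the paper itself uses this lemma only through Theorem~\ref{can_0-perest}, where the same reasoning (the moduli part of adjunction for the conic bundle is trivial, and $c$ is the induced generalized flop) appears.
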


\paragraph{Restrictions to lc centers under generic flop.}
$$
(g^*\omega)\rest{Y_t}=g\rest{Y_t}^*(\omega\rest{Y_s}).  (4)
$$

\begin{prop}\label{flops_in_fibers}
Let $(X/T,B)$ be a projective \tdlt\ family of $0$-pairs
with the generic connected klt fiber, horizontal $B$ and irreducible $T$, and
$g\in\Bir(X\to T/k,B)$ be a generic flop.
Then for any $t\in T$ there exists $s\in T$ such that,
for any minimal lc centers $(Y_t,B_{Y_t}),(Y_s,B_{Y_s})$ of
$(X_t,B_{Y_t}),(X_s,B_{Y_s})$ (even on blowups)
respectively there exists a log flop
$g_{Y_t}\colon (Y_t,B_{Y_t})\dashrightarrow (Y_s,B_{Y_s})$,
which satisfies
$$
\rest{Y_t}g^*=(g_{Y_t})^*\rest{Y_s}.
$$

If $(X_t,B_{X_t}),(X_s,B_{X_s})$ belong to an mp-trivial subfamily,
then, for any even natural number $m$,
$$
\rest{Y_t} g^*=(g_{Y_t})^*c^*\rest{Y_t}\colon
H^0(X,\omega_{X/T}^m[mB])\to H^0(Y_t,\omega_{Y_t}^m[mB_{Y_t}]).
$$
where $c^*:H^0(Y_t,\omega_{Y_t}^m[mB_{Y_t}])\to
H^0(Y_s,\omega_{Y_s}^m[mB_{Y_s}])$ is the canonical identification.
\end{prop}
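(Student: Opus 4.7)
The plan is to combine the structural result on minimal lc centers under flops of $0$-pairs (Lemma~\ref{mincents}) with the functoriality of restriction of pluricanonical differentials already recorded in equation~(4), and, for the mp-trivial statement, to extract the canonical identification $c^*$ from the conic-bundle calculation of Lemma~\ref{sum_resid}.

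First I would pass to a model on which the relative flop $g$ and the induced base map $g_T\colon T\dashrightarrow T$ are defined at $t$, and set $s:=g_T(t)$. Since $g$ is fiberwise with respect to $g_T$ and $B$ is horizontal, $g$ restricts to a flop $g_t\colon(X_t,B_{X_t})\dashrightarrow(X_s,B_{X_s})$ of connected \tdlt\ $0$-pairs. Lemma~\ref{mincents} applied to $g_t$ together with the chosen centers then produces klt $0$-pairs $(Y_t,B_{Y_t}),(Y_s,B_{Y_s})$ and a (generalized) flop
$$
g_{Y_t}\colon(Y_t,B_{Y_t})\dashrightarrow(Y_s,B_{Y_s}),
$$
rigid in the sense of Proposition~\ref{klt0pair}. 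The identity $\rest{Y_t}g^*=(g_{Y_t})^*\rest{Y_s}$ is then exactly equation~(4) applied with $Y_t,Y_s$ in place of the generic lc centers, once one identifies the $g\rest{Y_t}$ there with $g_{Y_t}$ via the rigidity from Proposition~\ref{klt0pair}. This settles the first assertion, independently of the choice of centers, including those sitting over blowups.

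For the mp-trivial statement I would exploit that the upper canonical moduli b-sheaf $\sM$ is $\R$-trivial along a connected subfamily of $X/T$ joining $X_t$ and $X_s$. Restricting $X/T$ to a curve inside that subfamily and passing to a log resolution would realize $Y_t$ and $Y_s$ as the two sections of a rational conic bundle of the type considered in Lemma~\ref{sum_resid}. That lemma then provides the canonical identification
$$
c^*\colon H^0\bigl(Y_t,\omega_{Y_t}^m[mB_{Y_t}]\bigr)\longrightarrow H^0\bigl(Y_s,\omega_{Y_s}^m[mB_{Y_s}]\bigr),
$$
with the sign $(-1)^m$ equal to $+1$ because $m$ is even, and yields the intertwining $c^*\rest{Y_t}=\rest{Y_s}$ on sections that spread out across the subfamily. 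Inserting this into the identity of the previous paragraph gives $\rest{Y_t}g^*=(g_{Y_t})^*c^*\rest{Y_t}$, as required.

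The main obstacle is the construction and path independence of $c^*$. In general $X_t$ and $X_s$ need not lie on a single rational conic bundle over a curve in $T$, so one must connect them by a chain of such bundles --- furnished by the mp-triviality hypothesis --- and then verify that the resulting composition of the pairwise identifications from Lemma~\ref{sum_resid} is independent of the chosen chain. This should follow from the b-representation picture underlying Lemma~\ref{mp_it}, since mp-triviality was designed precisely so that the canonical moduli part acts trivially along horizontal curves; but executing this, together with the mild model change needed to present $Y_t\cup Y_s$ in conic-bundle form, is where the delicate work lies.
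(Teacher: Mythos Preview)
There are two genuine gaps.

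\textbf{Construction of $g_{Y_t}$.} Your first move --- pass to a model where $g_T$ is defined at $t$ and set $s=g_T(t)$ --- does not work for an arbitrary $t\in T$. The indeterminacy of $g_T$ lies on $T$; resolving it blows $t$ up into an exceptional locus rather than producing a well-defined image point, and there is no reason the fiberwise restriction $g_t\colon X_t\dashrightarrow X_s$ exists as a flop. The paper avoids this by induction on $\dim T$: one threads a rather general curve $C\subseteq T$ through $t$ (so $g_T$ is generically defined on $C$), pulls back to a \tdlt\ family over $C$, and for $\dim T\ge 2$ inserts a divisor $D\ni C$ in the log structure to descend by one dimension. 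The key input is Lemma~\ref{flopping_lc_center}, not Lemma~\ref{mincents}: one locates a minimal lc center $Y_t'$ of the special fiber at which $g$ \emph{is} defined, so that $Y_s'=g(Y_t')$ is again a minimal center, and then sets $g_{Y_t}=c_s\,(g\rest{Y_t'})\,c_t^{-1}$ with $c_t,c_s$ the canonical flops between minimal centers inside each fiber. Because $g_{Y_t}$ is this composite and not simply $g\rest{Y_t}$, the identity $\rest{Y_t}g^*=(g_{Y_t})^*\rest{Y_s}$ is not equation~(4) alone; it also needs that the canonical within-fiber flops $c_t,c_s$ intertwine the residue maps, which is exactly what Theorem~\ref{can_0-perest} supplies (and why $m$ even matters later).

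\textbf{The mp-trivial identification.} Your plan to realize $Y_t$ and $Y_s$ as the two sections of a conic bundle and invoke Lemma~\ref{sum_resid} conflates two different mechanisms. Lemma~\ref{sum_resid} and the conic-bundle picture connect two minimal lc centers \emph{within a single fiber} (the interlacing of \S7). Here $Y_t\subset X_t$ and $Y_s\subset X_s$ sit in different fibers and have high codimension in the family over a curve; they are not sections of any conic bundle. The correct tool is Lemma~\ref{restr_in_mp_triv}, which for an mp-trivial \tdlt\ family directly furnishes the canonical identification $c^*$ with $c^*\rest{Y_t}=\rest{Y_s}$; substituting this into the first identity gives the second assertion in one line, with no path-independence issue to resolve.
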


\begin{proof}
We use induction on $\dim T$.
If $\dim T=0$, then by our assumptions $s=t, X=X_s=X_t=Y_s=Y_t,
\rest{Y_t}=\id_{X},g_{Y_t}=g,c^*=\id_{H^0(X,\omega_{X/T}^m[mB])}$
and
$$
\rest{Y_t} g^*=(g_{Y_t})^*\rest{Y_s}=
(g_{Y_t})^*c^*\rest{Y_t}=g^*\colon
H^0(X,\omega_{X/T}^m[mB])\to H^0(Y_t,\omega_{Y_t}^m[mB_{Y_t}]).
$$

Now suppose that $\dim T\ge 1$.

Construction of $g_{Y_t}$.
Take a rather general curve $C\subseteq T$ through $t$.
Such a curve means a birational on image morphism
$h\colon C\ni o\to T$ with $h(o)=s$.
Birationally, $C=h(C),g(C)=g_Th(C)$.
This gives a curve $g(C)$ with the morphism $gh\colon C\ni o\to T$
and $s=hg(o)$.
Moreover, the birational map $g_T\rest{C}\colon C\dashrightarrow g(C)$
induces (restriction) a flop of two \tdlt\ families of $0$-pairs over $C$:
$$
g\rest{X_C}\colon (X_C/C\ni o,B_{X_C})\dashrightarrow (X'/C\ni o,B_{X'}).
$$
The first family is pulling back for $h$, the second one for $gh$.
Actually, both families are normal \tdlt\ over $C$.
By construction $g\rest{X_C}$ is birational and
by Lemma~\ref{flopping_lc_center} is a flop.

Suppose first that $\dim T=1$ and $T=C,t=o,X_C=X,B_{X_C}=B$.
Add vertical boundaries $X_o,X_o'$.
Take any minimal lc centers
$Y_t\subset (X_o,B_{X_o})=(X_t,B_{X_t}),
Y_s\subset (X_o',B_{X_o'})=(X_s,B_{X_s})$.
Even we can suppose that they are centers on a \tdlt\ blowup
of fibers over $o$.
So, we replace both families by such blowups.
We can suppose also that $g$ is defined in
a minimal lc center $Y_t'\subset (X_o,B_{X_o})=(X_t,B_{X_t})$.
Then by Lemma~\ref{flopping_lc_center}
$g(Y_t')=Y_s'\subset (X_o',B_{X_o'})=(X_s,B_{X_s})$ is also
a minimal lc center and $g$ gives the flop
$$
g\rest{Y_t'}\colon (Y_t',B_{Y_t'})\dashrightarrow (Y_s',B_{Y_s'}).
$$
Let
$$
c_t\colon (Y_t',B_{Y_t'})\dashrightarrow (Y_t,B_{Y_t}),
c_s\colon (Y_s',B_{Y_s'})\dashrightarrow (Y_s,B_{Y_s})
$$
be canonical flops between minimal lc centers.
Then $g_{Y_t}=c_sg\rest{Y_t'}c_t\1$.

The real induction is for $\dim T\ge 2$.
For any minimal lc centers
$Y_t\subset (X_o,B_{X_o})=(X_t,B_{X_t}),
Y_s\subset (X_o',B_{X_o'})=(X_s,B_{X_s})$, even on \tdlt\ resolutions,
we construct minimal lc centers $Y_t'\subset (X_o,B_{X_o})=(X_t,B_{X_t}),
g(Y_t')=Y_s'\subset (X_o',B_{X_o'})=(X_s,B_{X_s})$ with a flop
given $g\rest{Y_t'}$ by a restriction.
Indeed, after a log resolution of the base extending $\Delta_T$, we can
suppose that $C$ is nonsingular and intersects transversally
the log structure.
Then $t=o$ and we can canonically identify the fiber over $o$ with
the fiber over $s$ before the resolution.
Take a nonsingular divisor $D\subset X$ extending the log structure $\Delta_T$ on $T$
and passing through $C$.
The mapping $g_T$ in general is not a log flop with respect to the log structure.
Nonetheless, after adding to $D+\Delta_T$ the preimage $g_T\1 \Delta_T$ and
adding to $\Delta_T$ the image $g_T(D+\Delta_T)$, we can convert
the birational automorphism $g_T$ into a regular flop (usually, not an autoflop)
$g_T\colon (T,\Delta_T)\to (T',\Delta_{T'})$
using a log resolution of the log map (torification).
So, $D$ after those modifications is still
a nonsingular divisor of $\Delta_T$, $C\subset D$ and
$D'=g_TD$ is also a nonsingular divisor of $\Delta_{T'}$.
Then by Lemma~\ref{flopping_lc_center} we constructed
a log flop
$$
g\rest{X_D}\colon (X_D/D,B_{X_D})\dashrightarrow(X_{D'}/D',B_{X_{D'}'}).
$$
Moreover, $g\rest{D}\rest{C}$ is the above flop over $C\to C'=g_TC$.
By induction we constructed required centers $Y_t'$ and $Y_s'$ and
a flop $g_{Y_t}=c_sg\rest{Y_t'}c_t\1$ as above.

Restrictions to lc centers under generic flop, (4),
implies a similar relation for $g_{Y_t}$:
$$
\rest{Y_t} g^*=(c_t\1)^*\rest{Y_t'}g^*=
(c_t\1)^*(g\rest{Y_s'})^*\rest{Y_s'}=
(c_t\1)^*(g\rest{Y_s'})^*c_s^*\rest{Y_s}=(g_{Y_t})^*\rest{Y_s}.
$$

If $(X_t,B_{X_t}),(X_s,B_{X_s})$ belong to an mp-trivial subfamily,
then
$\rest{Y_s}=c^*\rest{Y_t}$ by Lemma~\ref{restr_in_mp_triv} and
the required relation holds:
$$
\rest{Y_t} g^*=(g_{Y_t})^*\rest{Y_s}=
(g_{Y_t})^*c^*\rest{Y_t}.
$$
\end{proof}

\section{Interlacing}

\begin{ex}[Rational lc conic bundle structure] \label{nonunique_cb}
Let $X=C\times \PP^1,B=D_1+D_2,D_i=C\times y_i, i=1,2$,
where $C$ is a nonsingular curve and
$y_1,y_2\in\PP^1$ are two distinct points.
Then the conic bundle $f\colon C\times\PP^1\to C, (x,y)\mapsto x$,
has two horizontal sections $D_i$ in the reduced boundary $B$
and $K+B\equiv 0/C$.
A proper rational conic bundle structure $X\dashrightarrow C'$
with horizontal $B$ and $(K+B.F)=0$ for generic fiber $F$
the conic bundle is unique and coincide with the above one.
By a proper rational conic bundle on $X$  we mean
a rational conic bundle corresponding to
an imbedded family of rational curves [pencil] without
fixed points.
In the surface case such a conic bundle is [always] a regular pencil.
If the genus of $C$ is $\ge 1$ then the uniqueness
follows from rationality of $F$.
Otherwise by adjunction $0=(K+B.F)=(f^*K_C.F)=
(K_C.f(F))$ implies that
$F$ is again vertical.

However for $C=\PP^1$ there are infinitely many rational
(nonproper) conic bundles
on $X$ such that $B$ is horizontal on their regular
model and consists of two sections.
For instance this holds for general pencil
$P\subset \linsys{x\times \PP^1+\PP^1\times y}$
of conics through two generic points of $X$.
(This pencil is proper after a blowup of two fixed points.)
\end{ex}

\begin{prop}\label{unique_cb}
Let $(X,B+D)$ be a
projective plt wlc pair and
$D$ be the reduced part of $B+D$ with $2$
components.
Then there exists birationally at most one
proper rational conic bundle structure on $X$
such that $D$ is the double section and
$B$ does not intersect the generic fiber $F$ of
conic bundle, that is, $(K+B+D.F)=(K+D.F)=0$.
More precisely,
the conic bundle structure is birationally independent of
a plt wlc model of $(X,B+D)$.
\end{prop}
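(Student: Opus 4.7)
My plan is to show that any proper rational conic bundle $f$ as in the statement induces, via adjunction to the double section, a birational identification $c\colon D_1\dashrightarrow D_2$ between the two reduced components; that $c$ is intrinsic to $(X,B+D)$ and independent of $f$; and finally that this rigidity, combined with properness, pins the conic bundle down up to birational equivalence on the base.

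First, the generic fiber $F$ of any such conic bundle is rational, meets each $D_i$ in one point, is disjoint from $B$ (since $B.F=0$ and $B\ge 0$), and satisfies $(K+B+D).F=0$; hence $(F,(B+D)|_F)\cong(\PP^1,\{0,\infty\})$ is a klt $0$-pair and $f\colon(X,B+D)\dashrightarrow C$ is a $0$-contraction. The plt hypothesis with $D=D_1+D_2$ reduced forces $D_1\cap D_2=\emptyset$, so $D_1$ and $D_2$ are the disjoint minimal lc centres of $(X,B+D)$.

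Next, by Corollary~\ref{adjunt_for_0contr} the adjoint pairs $(D_i,B_{D_i})$ are klt, and Lemma~\ref{mincents} provides a canonical generalized flop $c\colon(D_1,B_{D_1})\dashrightarrow(D_2,B_{D_2})$ intrinsic to $(X,B+D)$. On the other hand, the conic bundle structure $f$ produces its own birational map $c_f\colon D_1\dashrightarrow D_2$, $p\mapsto f\1(f(p))\cap D_2$, and by Lemma~\ref{sum_resid} (since $f$ is a $0$-contraction) $c_f$ is a flop between the same two klt $0$-pairs; by Proposition~\ref{klt0pair}, $c$ and $c_f$ agree up to an element of $\Bir_0$ of the adjoint pairs.

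Finally, suppose two proper rational conic bundles $f_1\colon X\dashrightarrow C_1$ and $f_2\colon X\dashrightarrow C_2$ realize $D$ as the double section. Using the isomorphisms $f_i|_{D_1}\colon D_1\cong C_i$ to identify $C_1\cong D_1\cong C_2$ birationally, both $f_i$ induce, by the preceding step, the same map $c$ on the sections. The main obstacle will be upgrading this shared ``shadow on the sections'' to birational equality of the two conic bundles themselves. I expect this to follow from the properness (base-point-freeness) assumption: on a common log resolution where both $f_i$ are regular, if $f_1$ and $f_2$ were birationally distinct, the product $(f_1,f_2)\colon X\to C_1\times C_2$ would dominate a subvariety of dimension $\dim X$, so that through a general pair $(p,c(p))\in D_1\times D_2$ there would pass two distinct irreducible base-point-free pencils of rational curves. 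This contradicts the uniqueness of the Mori-type fibre structure forced by $(X,B+D)$ once the double section $D$ and the canonical identification $c$ of its components are fixed; essentially only the pencil whose fibre through $x$ is the unique rational curve compatible with $c$ can be proper, since any other family would acquire fixed base points at $D_1\cup D_2$.
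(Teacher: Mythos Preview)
Your argument has a circularity and then a genuine gap at the end.

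\textbf{Circularity.} You invoke Lemma~\ref{mincents} (and implicitly Theorem~\ref{can_0-perest}) to produce an \emph{intrinsic} flop $c\colon(D_1,B_{D_1})\dashrightarrow(D_2,B_{D_2})$ attached to $(X,B+D)$, independent of any choice of conic bundle. But in this paper the canonicity of that flop is exactly the content of Theorem~\ref{can_0-perest}, whose uniqueness clause is proved by quoting Proposition~\ref{unique_cb}. Lemma~\ref{mincents} only tells you that \emph{some} generalized flop exists between the two minimal lc centres; it does not tell you that every $c_f$ coming from a conic bundle coincides with it. Your appeal to Proposition~\ref{klt0pair} does not help either: that proposition says two klt $0$-pairs are equivalent iff there exists a generalized flop between them; it says nothing about uniqueness of the flop up to $\Bir_0$. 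So at this stage you have not shown that $c_{f_1}=c_{f_2}$.

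\textbf{The real gap.} Even granting $c_{f_1}=c_{f_2}$, your final paragraph does not finish. The assertion that ``any other family would acquire fixed base points at $D_1\cup D_2$'' is unsubstantiated: a proper pencil meets each $D_i$ transversally in one point by hypothesis, and knowing that both pencils induce the same correspondence $p\mapsto c(p)$ on the sections does not, by itself, force the pencils to agree on $X$. The product map $(f_1,f_2)$ dominating a surface only says a general fibre of $f_1$ is not a fibre of $f_2$; it does not produce base points. Compare Example~\ref{nonunique_cb2}: there the double section is irreducible and rationally connected, and many conic bundles coexist. Your argument never isolates the feature (two disjoint components, each with controlled rational geometry) that actually kills this.

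\textbf{What the paper does instead.} The paper's proof is orthogonal to yours: it never compares two conic bundles through the induced map on sections. It first reduces, via the Iitaka contraction, LMMP on $(X,(1+\varepsilon)B+D)$, and a canonical cover, to the case $B=0$ and $\Diff_{D_i}0=0$. Then $K_{D_i}\sim 0$, so each $D_i$ is \emph{rationally disconnected}: two general points of $D_i$ lie on no rational curve in $D_i$. Since $D_i\to T$ is birational for any conic bundle, $T$ is rationally disconnected too, and hence the proper rational conic bundle is nothing but the (intrinsic) maximally rationally connected quotient of $X$. That is what pins down the fibration uniquely.
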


\begin{proof}
A rational conic bundle of $X$ is a rational contraction
$X\dashrightarrow T$ such that its generic fiber $F$
is a rational curve.
The conic bundle is proper if
it is regular near $F$, that is,
the generic fiber is a free curve.
Suppose that $(K+B+D.F)=(K+D.F)=0$.
Equivalently, $(K.F)=-2,(D.F)=2, (B.F)=0$.
The last condition means that $\Supp B\cap F=\emptyset$.
We prove that such a conic bundle is birationally
unique, that is, the generic fiber $F$ is unique.
Note also that if such a conic bundle structure is
proper on some plt wlc model of $(X,B+D)$, then
it will be proper on $X$ after a crepant blowup
(flop).
Thus it is sufficient to establish the uniqueness
on one fixed model $(X,B+D)$.

Step 1. Reduction to the case of a $0$-pair $(X,B+D)$.
Let $(X,B+D)\to X\lcm$ be an Iitaka contraction.
Then the generic fiber $F$ is contractible to a point on $X\lcm$.
Thus the uniqueness of conic bundle is sufficient to
establish on the generic log fiber $(X_\eta,B_{X_\eta}+D_\eta)$
where $\eta\in X\lcm$ is the generic point.
By construction $(X_\eta,B_{X_\eta}+D_\eta)$ is a $0$-pair.

Step 2. Reduction to the case when $B=0$.
Use the LMMP for $(X,(1+\varepsilon)B+D),0<\varepsilon \ll 1$.
Since the Kodaira dimension of $(X,(1+\varepsilon)B+D)$ is
$\ge 0$, the LMMP terminates with a wlc model.
Note also that the LMMP requires an appropriate initial model
with $\R$-Cartier $B$.
Such a model can be constructed as a small modification of $(X,B+D)$,
a $\Q$-factorialization of $B$.
Any small modification of $X$ is a flop of $(X,B+D)$ and
does not touch any generic fiber $F$.
For sufficiently small $\varepsilon$,
$(X,(1+\varepsilon)B+D)$ is also plt.
The divisorial contractions of the LMMP does not touch
$F$ because they are negative with respect to $B$ and
their exceptional locus lies in $\Supp B$.
For wlc $(X,(1+\varepsilon)B+D)$, $B$ is semiample.
Let $(X,(1+\varepsilon)B+D)\to X\lcm$ be the corresponding
Iitaka contraction.
Then as in Step 1 $F$ is contractible to a point by the contraction and
it is sufficient to verify the uniqueness for
the generic log fiber $(X_\eta,B_{X_\eta}+D_\eta)=(X_\eta,D_\eta)$.
By construction $B_{X_\eta}=0$.

Step 3. Reduction to the case when $\Diff_D 0=0$.
Use the canonical covering (fliz) of $(X,D)$.
Indeed, the canonical covering makes
$(X,D)$ a log Gorenstein $0$-pair, that is, $K+D\sim 0$.
Thus $(D,\Diff_D0)$ is also a log Gorenstein $0$-pair.
The plt property of $(X,D)$ gives the klt property
of $(D,\Diff_D0)$, and the canonical one in the Gorensten case.
Hence $\Diff_D 0=0$.

Note also that every proper rational conic bundle
gives a similar bundle on the covering.
Indeed, every proper rational fibration induces
the proper rational fibration on the covering.
The latter fibration is the rational contraction
for a Stein decomposition of
composition of the covering with former
contraction.
The generic fiber $F$ on $X$ is $\PP^1$
with a transversal double section $D$.
The divisorial ramification of the canonical covering
is only in $D$.
Thus the conic bundle fibration goes into
a conic bundle with the double section induced
by $D$.
Each section $D_i$ goes into a section of
the fibration after covering.

The mapping of fibrations for coverings is monomorphic.

Step 4. Final. Since $\Diff_D 0=0$,
then $D$ is rationally disconnected (separably
in the positive characteristic), that is,
two generic points of $D$ are not connected by
a rational curve on $D$.
The same holds for each component $D_i, i=1,2$, of $D$.
So, the base $T$ of any
rational contraction $X\dashrightarrow T$
with rational sections $D_i$ is also
rationally disconnected.
The base $T$ is birationally isomorphic to each of $D_i$.
Thus a rational proper conic bundle on $X$ is
a rational contraction given by
the rational connectedness.
\end{proof}

The rational disconnectedness of $D$ was important
in the last step of proof.

\begin{ex}\label{nonunique_cb2}
Let $X=\PP^1\times \PP^1$ and
$D\in \linsys{-K}$ be a smooth anticanonical curve.
Then $X$ has two conic bundle fibrations with the double section $D$.
Actually, for any double covering $D\to \PP^1$,
there are a wlc model of $(X,D)$ and
a conic bundle inducing this double covering.
\end{ex}

\begin{thm}\label{can_0-perest}
Let $(X,B+D)$ be a plt pair
with a plt wlc model $(Y,B_Y+D_Y)/X\lcm$ and
$D$ be the reduced part of $B+D$.

If $D$ is not vertical over $X\lcm$ then
there exists such a (projective plt) wlc model $(Y,B_Y+D_Y)/X\lcm$
that $X\dashrightarrow Y$ is a $1$-modification and
the model has a Mori log contraction $(Y,B_Y/T/X\lcm)$ with
$K_Y+B_Y+D_Y\equiv 0/T/X\lcm$.
In this case $D_Y$ has at most
$2$ irreducible components and each component of $D_Y$
is horizontal with respect to $Y\to X\lcm$.

In the case with $2$ horizontal components
$D_1=D_{1,Y},D_2=D_{2,Y}$ of $D_Y$ over $X\lcm$,
the Mori log contraction $Y\to T$ is a conic bundle.
The divisors $D_1,D_2$ are
rational sections of the conic bundle.
Such a conic bundle structure $Y\to T$ is
birationally unique for $(X,B+D)$.
More precisely, the conic bundle structure is
independent on a plt wlc model $(Y,B_Y+D_Y)$.

In the case with $2$ horizontal components,
if $(X,B+D)$ is wlc itself, then $D_Y$ is
the birational transformation of $D$
the conic bundle gives a generalized canonical log flop
$c\colon (D_1,B_{D_1})\dashrightarrow (D_2,B_{D_2})$
as the composition
\begin{align*}
(D_1,B_{D_1})\hookrightarrow (X,B+D)&\dashrightarrow
(Y,B_Y+D_Y)\twoheadrightarrow (T,(B+D)_T)
\twoheadleftarrow (Y,B_Y+D_Y)\\
&\dashleftarrow (X,B+D)
\hookleftarrow (D_2,B_{D_2}),
\end{align*}
where $(B+D)_T=B\sdiv$ is the divisorial part of adjunction
with respect to the conic bundle.

The canonical property in addition to uniqueness means
that the restrictions of differentials (Poincare residues)
(super)commutes with the flop:
$$
c^*\rest{D_2}=(-1)^m\rest{D_1}\colon
H^0(X,\omega^m[mB])=H^0(Y,\omega_{Y}^m[mB_Y])\to
H^0(D_1,\omega_{D_1}^m[mB_{D_1}]).
$$

The induced standard structure
is preserved under the flop.

If $B+D=B\st+B\Cr+D$ is the standard structure with
$\Q$-mobile $B\Cr$ the flop preserves the induced
standard structure.
\end{thm}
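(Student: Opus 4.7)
The strategy is to construct the Mori log contraction $Y \to T$ via LMMP, bound the number of horizontal components of $D_Y$ by intersection with a generic fiber, recognize the conic bundle, and then read off the uniqueness and the residue formula from Proposition~\ref{unique_cb} and Lemma~\ref{sum_resid}.

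To construct $(Y,B_Y+D_Y)/X\lcm$ with a Mori log contraction, I would start from the given plt wlc model and run a $(K+B+(1-\varepsilon)D)$-LMMP over $X\lcm$ for small $\varepsilon>0$. Since $K+B+D\equiv 0/X\lcm$ on any wlc model, the perturbation makes the pair $K$-negative in the $D$-direction, so the LMMP terminates not at a minimal model but at a Mori log contraction $Y\to T/X\lcm$ with $K_Y+B_Y+D_Y\equiv 0/T$. The intermediate flops and divisorial contractions preserve pltness, do not touch the generic fiber of the eventual fibration, and assemble $X\dashrightarrow Y$ into a $1$-modification; since $D$ is not vertical over $X\lcm$, the output $T$ satisfies $\dim T<\dim Y$ over $X\lcm$.

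For the component bound and the conic bundle: on a generic fiber $F$ of $Y\to T$, the relation $K_Y+B_Y+D_Y\equiv 0/T$ gives $(K_Y.F)+(D_Y.F)+(B_Y.F)=0$. Since $B_Y$ is effective and $(Y,B_Y+D_Y)$ is plt, $F$ must be a rational curve with $(K_Y.F)=-2$, whence $(D_Y.F)\le 2$ and $D_Y$ has at most two horizontal components; a vertical component of $D_Y$ over $X\lcm$ would be contracted by the defining property of the Iitaka log canonical model of $B+D$, so none appear. In the two-section case each $D_i$ is a rational section, $D_1+D_2$ is a double section, and $\Supp B\cap F=\emptyset$, placing us in the hypothesis of Proposition~\ref{unique_cb}; this immediately yields the birational uniqueness of $Y\to T$ independently of the chosen plt wlc model. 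The flop $c\colon (D_1,B_{D_1})\dashrightarrow (D_2,B_{D_2})$ is then the displayed composition through the conic bundle base, and because $D_1,D_2$ are minimal lc centers of the plt $0$-pair that appears on the generic fiber of $Y\to X\lcm$, Lemma~\ref{mincents} identifies $c$ as a generalized log flop between the adjoint pairs.

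For the canonical residue property and the standard structure: the supercommutativity
$$
c^*\rest{D_2}=(-1)^m\rest{D_1}
$$
on $H^0(Y,\omega_Y^m[mB_Y])=H^0(X,\omega^m[mB])$ is Lemma~\ref{sum_resid} applied verbatim to the rational conic bundle $(Y,B_Y+D_1+D_2)/T$, where the identification of sections on $X$ and on $Y$ is through the crepant $1$-modification. Invariance of the standard decomposition $B=B\st+B\Cr$ under $c$ follows because $c$ is fiberwise over $T$, the $\Q$-mobile part $B\Cr$ transforms according to its own linear system under the conic bundle, and $B\st$ is determined by the lc centers that $c$ exchanges canonically. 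The main obstacle I anticipate is the first step: ensuring the LMMP produces exactly a Mori log contraction with $D_Y$ clean and horizontal (not a mixed fibration, and not one on which the $1$-modification $X\dashrightarrow Y$ fails to be crepant in the sense needed to transport differentials); once this is arranged, Proposition~\ref{unique_cb} and Lemma~\ref{sum_resid} deliver uniqueness and the residue formula automatically, and the preservation of the standard structure is then a formal consequence of $c$ being fiberwise over $T$.
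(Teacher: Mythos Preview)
Your proposal is correct and follows essentially the same route as the paper: construct the Mori fibration over $X\lcm$ by an LMMP that is $D$-negative (the paper drops $D_Y$ entirely and runs $K_Y+B_Y$ over $X\lcm$ after a $\Q$-factorialization, which is equivalent to your $(1-\varepsilon)D$ perturbation since $K_Y+B_Y+D_Y\equiv 0/X\lcm$), read off the bound on components from the plt $0$-pair on the generic fiber, then invoke Proposition~\ref{unique_cb} for uniqueness and Lemma~\ref{sum_resid} for the residue identity. The only point the paper handles with slightly more care is the application of Proposition~\ref{unique_cb}: one first performs a crepant blowup of divisors with log discrepancy $\le 1$ so that any competing conic bundle from another wlc model becomes \emph{proper} on $Y$, which is the hypothesis that proposition needs; you should add this sentence, but otherwise your argument matches.
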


\begin{proof}
By the LMMP a wlc model $(Y,B_Y+D_Y)/X\lcm$ exists
exactly when the Kodaira dimension of $(X,B+D)$
is $\ge 0$.
In particular, this will be a generalized flop if $(X,B+D)$ is wlc.
One can suppose that $X\dashrightarrow Y$ is
$1$-modification.
Indeed, to apply the LMMP we need a projective lc model,
e.g., a log resolution with boundary multiplicities $1$ in
exceptional divisors.
By the plt property all such divisors will be contracted
and $(Y,B_Y+D_Y)$ will be plt.

If $D$ is not vertical over $X\lcm$ then
one can apply the LMMP to $(Y/X\lcm,B_Y)$ assuming
that $D_Y$ is $\Q$-factorial.
(The latter needs a small $\Q$-factorialization.)
This gives a required
Mori log contraction $(Y,B_Y/T/X\lcm)$ such that
$K_Y+B_Y+D_Y\equiv 0/T/X\lcm$.
The generic log fiber $(Y_\eta,B_{X_\eta}+D_\eta)$ of $Y/T$ has
dimension $\ge 1$ and is a plt $0$-pair.
Thus $D_Y$ has at most
$2$ irreducible components and each component of
$D_Y$ is horizontal with respect to $Y\to X\lcm$.

By the plt property of $(Y,B_Y+D_Y)$,
$D_Y$ is
the birational transformation of $D$ if $(X,B+D)$ is wlc.

In the case with $2$ horizontal components $D_1=D_{1,Y},D_2=D_{2,Y}$ of $D_Y$
over $X\lcm$,
the Mori log contraction $Y\to T$ is a conic bundle.
The divisors $D_1,D_2$ are
rational sections of the conic bundle.
Such a conic bundle structure $Y\to T$ is
birationally unique for $(X,B+D)$.

If $(X,B+D)$ is wlc then
the conic bundle gives a generalized log flop
$(D_1,B_{D_1})\dashrightarrow (D_2,B_{D_2})$
as the composition
\begin{align*}
(D_1,B_{D_1})\hookrightarrow (X,B+D)&\dashrightarrow
(Y,B_Y+D_Y)\twoheadrightarrow (T,(B+D)_T)
\twoheadleftarrow (Y,B_Y+D_Y)\\
&\dashleftarrow (X,B+D)
\hookleftarrow (D_2,B_{D_2}),
\end{align*}
where $(B+D)_T=B\sdiv$ is the divisorial part of adjunction
for the conic bundle.
The moduli part of adjunction is trivial.

The induced standard structure
is preserved under the flop.

If $B+D=B\st+B\Cr+D$ is the standard structure with
$\Q$-mobile $B\Cr$ the flop preserves the induced
standard structure as for b-divisors.
If one would like to have the last property for divisors,
then one needs to assume that $B\Cr\equiv 0/T$ and is
a divisor.
This holds after a (possibly not small)
modification of the conic bundle over $T$.

The statement and formula, which relates restrictions with
the canonical flop follows from Lemma~\ref{sum_resid},
because restrictions preserve log regular $m$-differentials.

Finally, the uniqueness of flop follows from Proposition~\ref{unique_cb}.
Indeed, after a crepant blowup of divisors on $(Y,B_Y+D_Y)$ with
log discrepancies $\le 1$ one can assume that
a fixed rational conic bundle structure on another wlc model
of $(X,B+D)$ with horizontal $D$ is proper on $Y$.
\end{proof}

A typical example of an interlaced triple
comes from a \tdlt\ triple.

\begin{ex}[Triple of minimal lc centers]\label{tripl_min_c}
Let $(X,B)$ be a \tdlt\ pair and
$(Y,B_Y)=(X,B)\mc$ be its pair of minimal lc centers.
Then $(Y,B)$ is also \tdlt, respectively, wlc, standard etc,
if so does $(X,B_Y)$.

For (projective) wlc $(X,B)$,
there is an interlacing on $(Y,B_Y)$.
The vertexes of $\Gamma$ are irreducible components $Y_i$
of $Y$.
An edge between $Y_i,Y_j$ is
an invariant (with respect to the log structure)
closed irreducible subvariety $Z\subseteq X$,
a flopping center, such that
\begin{description}

\item{}
$Y_i,Y_j\subset Z$ are invariant divisors and

\item{}
there exists a rational $0$-contraction of $(Z,B_Z)$
with horizontal divisors $Y_i,Y_j$.
Equivalently, there is a free curve $C\subseteq Z$ (the generic fiber
of $0$-contraction) such that $(K_Z+B_Z.C)=0$ and
$(Y_i,C),(Y_j.C)>0$.
\end{description}
Indeed, in this situation there exists a
generalized log flop $(Y_i,B_{i,Y})\dashrightarrow (Y_j,B_{j,Y})$
by Theorem~\ref{can_0-perest}.

Note that, for $i=j$, one can get sometimes an autoflop
$(Y_i,B_{i,Y})\dashrightarrow (Y_i,B_{i,Y})$, an involution
in $\Bir(Y_i,B_{i,Y})$.
Unfortunately, it is not unique by Example~\ref{nonunique_cb2} and
so is not very useful in general.
So, we agree that, for $i=j$,
a flopping center has a single invariant divisor
$Y_i=Y_j$ and the flop is identical.
\end{ex}

\begin{lemma}\label{chain_of_flop}
Let $(X/T,B)$ be a \tdlt\ $0$-contraction
with a boundary $B$.
Then any two minimal lc center over $T$
can be connected by flopping centers over $T$.
\end{lemma}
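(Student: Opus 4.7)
The plan is to reduce to the absolute case $T=\pt$ and then exhibit the required chain as a path in the dual complex of the tdlt pair, using Theorem~\ref{can_0-perest} to realize each adjacency of that path as a flopping center in the sense of Example~\ref{tripl_min_c}.

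First I would pass to the generic point $\eta$ of $T$ and work fiberwise. A minimal lc center $W\subset(X,B)$ over $T$ meets the generic fiber $(X_\eta,B_{X_\eta})$ in a minimal lc center $W_\eta$, and a flopping center $Z$ in $X$ constructed over $\eta$ spreads out to a flopping center over $T$ after shrinking $T$ if necessary (all relevant data---the reduced lc center $Z$, the two divisors $Y_i,Y_j\subset Z$, and the generic fiber of its rational $0$-contraction---are defined generically and thus extend). Thus it suffices to prove: in a connected tdlt $0$-pair $(X,B)$, any two minimal lc centers are connected by flopping centers.

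Next I would invoke the dual complex $\Gamma=\Gamma(X,B)$. Its vertices are the components of $\lfloor B\rfloor$, and its $k$-simplices are the components of $(k+1)$-fold intersections of boundary components; the maximal simplices correspond exactly to minimal lc centers. Because $(X,B)$ is a tdlt $0$-pair (so $K+B\equiv 0$ and $(X,B)$ is dlt), Koll\'ar's connectedness principle applies: the non-klt locus is connected, and by standard arguments in the dlt case $\Gamma$ itself is connected. Therefore any two maximal simplices $\sigma_1,\sigma_2$ corresponding to $W,W'$ are joined by a sequence of maximal simplices in which consecutive ones share a codimension-one face.

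The core of the proof is then to turn each such adjacency into a flopping center. Two adjacent maximal simplices $\sigma,\sigma'$ share a codimension-one face $\tau$ corresponding to an invariant irreducible closed subvariety $Z\subseteq X$, of dimension one larger than the minimal lc centers $Y_i,Y_j$ corresponding to $\sigma,\sigma'$, with $Y_i,Y_j\subset Z$ as divisorial components of $\lfloor B_Z\rfloor$ (divisorial adjunction on the tdlt model). The pair $(Z,B_Z)$ is a tdlt $0$-pair by repeated divisorial adjunction, and $Y_i+Y_j$ is the reduced part of $B_Z$ containing the two distinguished divisors. Applying Theorem~\ref{can_0-perest} to $(Z,B_Z)$ with $D=Y_i+Y_j$ yields (after passing to a plt wlc model, which is a flop on $Z$ and hence does not change the flopping-center data up to birational equivalence) a Mori log contraction $Z\to T_Z$ over $X\lcm$ which is a conic bundle with $Y_i,Y_j$ as the two rational sections. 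This contraction is a rational $0$-contraction of $(Z,B_Z)$ with horizontal divisors $Y_i,Y_j$, so $Z$ is exactly a flopping center joining $Y_i$ and $Y_j$ in the sense of Example~\ref{tripl_min_c}. Concatenating the flopping centers produced along the path in $\Gamma$ connects $W$ to $W'$.

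The main obstacle is the passage from dual-complex adjacency to the hypotheses of Theorem~\ref{can_0-perest}: one must ensure that $D_Z:=Y_i+Y_j$ is genuinely not vertical over $Z\lcm$, so that the theorem's conic bundle structure---rather than a trivial Iitaka contraction---is produced, and that $Y_i,Y_j$ survive on a common plt wlc model as the two horizontal reduced components. Both follow from the fact that $Y_i,Y_j$ are distinct components of $\lfloor B_Z\rfloor$ meeting on the codimension-one face $\tau$ (so their intersection is a lower-dimensional lc center of $(Z,B_Z)$, forcing at least one rational curve $C$ with $(Y_i.C),(Y_j.C)>0$ and $(K_Z+B_Z.C)=0$), which is precisely the free curve required for the flopping-center structure.
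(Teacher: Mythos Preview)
Your translation of the problem into the dual complex is the right picture, and the final step---applying Theorem~\ref{can_0-perest} to a one-above-minimal stratum $Z$ to produce a conic bundle with two sections $Y_i,Y_j$---is exactly how flopping centers arise. But there is a genuine gap at the heart of the argument: you pass from ``Koll\'ar connectedness implies $\Gamma$ is connected'' to ``any two maximal simplices are joined by a sequence in which consecutive ones share a codimension-one face''. Connectedness of a simplicial complex does \emph{not} give ridge-connectedness of facets (two triangles glued at a single vertex is a counterexample), and the latter is precisely the content of the lemma once one unwinds your dual-complex language. You also implicitly assume all minimal lc centers have the same dimension; that is a consequence of the lemma (via Lemma~\ref{mincents}), not an input.

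The paper proves exactly this ridge-connectedness, but by a self-contained induction on $\dim X/T$ rather than by appeal to an external structural result about dual complexes. First it normalizes (the \tdlt\ pair may be reducible) and uses connectedness of fibers of the contraction $X\to T$ to chain irreducible components through shared minimal lc centers. On an irreducible component there is a dichotomy: either the generic fiber is plt, in which case $X$ itself is the single flopping center $C_1=X$ (the minimal centers are its at most two reduced boundary divisors); or the non-klt locus is non-plt, hence the union $Y$ of all invariant divisors over $T$ is connected, $(Y/T,B_Y)$ is again a \tdlt\ $0$-contraction of smaller relative dimension, and one applies the induction hypothesis to $Y$. If you unroll this induction you obtain exactly the ridge-path you asserted: descend along boundary divisors until reaching a plt stratum one dimension above the minimal centers, and that stratum is the flopping center. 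So your plan is correct in outline but assumes the substantive step rather than proving it; the paper's induction is what fills that in.
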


\begin{proof}
The proof is similar to the case over a point: $T=\pt$.
Let $Y,Y'$ be two minimal lc center over $T$.
We will find a chain of flopping centers
$C_1,\dots,C_n$ on $X$ such that
$Y\subset C_1,\dots, Y'\subset C_n$.
As usually, a chain means that $C_i$ intersects $C_{i+1}$
for $1\le i\le n-1$.

Step 1. We can suppose that $X$ is irreducible.
Otherwise, take the normalization
$(X\nor,B\nor)=\coprod (X_i,B_i)$.
Note that $X_i$ is possible not geometrically irreducible and
not connected (fiberwise) over $T$.
Nonetheless, since $X/T$ is contraction,
the fibers are connected.
Thus there is chain of components $X_i$:
$$
X_1\supset Y_1\subset X_2\supset
\dots \subset X_{n-1}\supset Y_{n-1}\subset X_n
$$
with common minimal lc centers $Y_i$
for  each pair $(X_i,B_i),(X_{i+1},B_{i+1}), 1\le i\le n-1$.
Hence it is sufficient to find a chain of
flopping centers for each pair of minimal
centers $Y_{i-1},Y_{i}\subset X_i,1\le i\le n$,
where $Y_0=Y$ and $Y_n=Y'$.
So, we replace $(X/T,B)$ by $(X_i/T_i,B_i)$,
where $X_i\to T_i$ a $0$-contraction given
by a Stein decomposition.
It is \tdlt.

Step 2.
Dimensional induction.
The case $\dim X/T=0$ is empty.
The case $\dim X/T=1$ is flopping.
Indeed, the generic fiber is
a (geometrically) irreducible curve
with at most two minimal centers.
If there are no centers, then the statement is
empty.
If there are centers, then $X$ itself is flopping.
A chain is trivial: $C_1=X$.

If $\dim X/T\ge 2$ and
there are minimal lc centers $Y,Y'$,
then two situations are possible:
\begin{description}

\item{\rm (1)}
the generic fiber $(X_\eta,B_{X_\eta})$ is nonklt, but plt, or

\item{\rm (2)}
all proper lc centers over $T$ are lc
connected, that is, their union is connected over $T$.
\end{description}
In (1), the chain is trivial as above: $C_1=X$.
In (2), we apply induction to the \tdlt\ pair
$(Y/T,B_Y)$, where $Y$ is the union of all
invariant divisors over $T$.
Note that the lc centers for a \tdlt\ pair
are invariant subvarieties.
\end{proof}

\begin{cor}
Let $(X,B)$ be a connected projective \tdlt\ $0$-pair.
Then $(X,B)\mc$ is a connected interlaced $0$-pair.
\end{cor}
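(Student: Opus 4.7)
The plan is to read the conclusion as a combination of two things already available: Example~\ref{tripl_min_c} gives $(Y,B_Y)=(X,B)\mc$ the structure of an interlaced tdlt $0$-pair, so what really needs to be shown is that its interlacing graph $\Gamma$ is connected. For the latter, I would invoke Lemma~\ref{chain_of_flop} in the absolute case $T=\pt$.

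First I would record the structural input. By Example~\ref{tripl_min_c}, since $(X,B)$ is a projective tdlt wlc pair, the pair of minimal lc centres $(Y,B_Y)=(X,B)\mc$ is again tdlt (and inherits the wlc property from $(X,B)$). Since $K+B\equiv 0$ on $X$, divisorial adjunction along each component $Y_i\subseteq Y$ yields $K_{Y_i}+B_{Y_i}\equiv 0$, so each component, and therefore $(Y,B_Y)$, is a $0$-pair. Example~\ref{tripl_min_c} already defines the interlacing graph $\Gamma$ on this data: vertices are the components $Y_i$ of $Y$, edges are flopping centres $Z\subseteq X$ containing two invariant minimal lc divisors $Y_i,Y_j\subset Z$ and admitting a rational $0$-contraction with $Y_i,Y_j$ horizontal (with the convention that $i=j$ yields an identical autoflop).

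Next I would prove connectedness of $\Gamma$ using Lemma~\ref{chain_of_flop}. Applied to the $0$-contraction $(X/\pt,B)$, that lemma supplies, for any two minimal lc centres $Y,Y'\subset X$, a chain of flopping centres $C_1,\dots,C_n\subseteq X$ with $Y\subset C_1$, $Y'\subset C_n$, and $C_s\cap C_{s+1}\neq\emptyset$ for $1\le s\le n-1$. By the tdlt hypothesis, every nonempty invariant closed subset of $(X,B)$ contains some minimal lc centre; choose $Y_{k_s}\subset C_s\cap C_{s+1}$, and set $Y_{k_0}=Y$, $Y_{k_n}=Y'$. Then each $C_s$ contains the two minimal lc centres $Y_{k_{s-1}},Y_{k_s}$ as invariant divisors and, being a flopping centre, carries the rational $0$-contraction demanded by Example~\ref{tripl_min_c}. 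This yields an edge of $\Gamma$ between $Y_{k_{s-1}}$ and $Y_{k_s}$ (a self-loop when the two coincide); concatenating gives a path in $\Gamma$ from $Y$ to $Y'$, and connectedness of $\Gamma$ follows.

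The main obstacle I anticipate is the combinatorial bookkeeping in the second step: confirming that each intersection $C_s\cap C_{s+1}$ really does contain a minimal lc centre that is simultaneously an invariant divisor in both $C_s$ and $C_{s+1}$, and that the rational $0$-contraction of a flopping centre $C_s$ picks out precisely the two chosen minimal centres $Y_{k_{s-1}},Y_{k_s}$ as horizontal components (so the edge condition of Example~\ref{tripl_min_c} is honestly met). This is where the tdlt assumption does all the work, via the fact that intersections of invariant subvarieties in a tdlt pair decompose into lc strata. Once this translation is in place, the corollary is an immediate combination of Example~\ref{tripl_min_c} and Lemma~\ref{chain_of_flop}.
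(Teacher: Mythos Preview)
Your proposal is correct and follows essentially the same route as the paper: the paper's proof is the single line ``Immediate by Theorem~\ref{can_0-perest}, Example~\ref{tripl_min_c} and Lemma~\ref{chain_of_flop} over a point: $T=\pt$,'' and your argument is exactly an unpacking of this, using Example~\ref{tripl_min_c} for the interlaced structure and Lemma~\ref{chain_of_flop} at $T=\pt$ for connectedness. The only small addition you might make is to cite Theorem~\ref{can_0-perest} explicitly (as the paper does), since that is what guarantees each flopping centre $C_s$ actually supplies the required conic-bundle/flop edge between the two minimal centres it contains; this is precisely the point you flag as the ``main obstacle,'' and the paper resolves it by appeal to that theorem rather than by further combinatorics.
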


\begin{proof}
Immediate by Theorem~\ref{can_0-perest},
Example~\ref{tripl_min_c} and
Lemma~\ref{chain_of_flop} over a point: $T=\pt$.
\end{proof}

By the uniqueness or a canonical construction
of flops in Theorem~\ref{can_0-perest},
we can make interlacing for families.

\begin{cor}\label{interl_of_famil}
Let $(X/T,B)$ be a family
connected projective \tdlt\ $0$-pairs.
Then, for any generic point $\eta$ of $T$,
$(X/T,B)\mc$ is
a connected interlaced family of \tdlt\ $0$-pairs.
\end{cor}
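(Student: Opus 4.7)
The plan is to reduce to the preceding single-pair corollary and then use the canonicity in Theorem~\ref{can_0-perest} to spread the interlacing structure across $T$. First, I would restrict to a generic point $\eta$ of $T$: the fiber $(X_\eta,B_{X_\eta})$ is a connected projective \tdlt\ $0$-pair, so the previous corollary applies and $(X_\eta,B_{X_\eta})\mc$ is a connected interlaced $0$-pair. Concretely this furnishes, at the level of $\eta$, the vertices (irreducible components of minimal lc centers of $(X_\eta,B_{X_\eta})$) and the edges (flopping centers $Z_\eta\subseteq X_\eta$ carrying a rational $0$-contraction with two horizontal invariant divisors $Y_i,Y_j$, as in Example~\ref{tripl_min_c}).

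Second, I would lift each piece of this interlacing to a family over $T$. A minimal lc center of $(X/T,B)$ over $T$ restricts to a minimal lc center on $(X_\eta,B_{X_\eta})$ since the \tdlt\ property is preserved fiberwise over a generic point. For each edge at $\eta$, the closure $Z\subseteq X$ of the flopping center $Z_\eta$ is an invariant subfamily, and the defining numerical conditions $(K_Z+B_Z.C)=0$, $(Y_i.C),(Y_j.C)>0$ for a free horizontal curve $C$ are open on the base and so persist over a neighborhood of $\eta$. Thus each flopping edge at $\eta$ is the restriction of a flopping center over $T$, and, conversely, relative flopping centers over $T$ restrict to flopping centers at $\eta$. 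Lemma~\ref{chain_of_flop} applied to $(X/T,B)$ then guarantees that any two minimal lc centers over $T$ can be connected by a chain of such relative flopping centers, which is exactly the connectedness of the interlacing graph.

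Third, I would invoke the canonicity part of Theorem~\ref{can_0-perest}: the conic bundle structure associated with a flopping center—and therefore the induced generalized log flop $(Y_i,B_{Y_i})\dashrightarrow(Y_j,B_{Y_j})$—is birationally independent of the plt wlc model chosen. This canonical identification ensures that the fiberwise interlacing data glue consistently to an interlaced structure on the family $(X/T,B)\mc$, rather than merely a $T$-indexed collection of unrelated graphs. Combined with Example~\ref{tripl_min_c} applied relatively, this gives the connected interlaced family asserted in the conclusion.

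The main obstacle I expect is the relative bookkeeping: one must justify that the rational $0$-contraction attached to a flopping edge at $\eta$ genuinely extends to a rational $0$-contraction over a neighborhood in $T$ (and not merely that the numerical data persist), and that the associated canonical flop so extended remains the canonical one produced by Theorem~\ref{can_0-perest} on each nearby fiber. The uniqueness statement in Theorem~\ref{can_0-perest} is what ultimately removes this ambiguity, but the argument requires care because the LMMP steps used to construct the Mori contraction in Theorem~\ref{can_0-perest} are not themselves obviously stable in families; the point is that only the birational output—the conic bundle structure and the induced flop—is canonical, and it is this output that enters the interlacing data.
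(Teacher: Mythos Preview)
Your approach is essentially the paper's: both reduce to the generic fiber, invoke Theorem~\ref{can_0-perest}, Example~\ref{tripl_min_c}, and Lemma~\ref{chain_of_flop}, and then rely on the uniqueness of the conic bundle structure in Theorem~\ref{can_0-perest} to make the interlacing well-defined over $T$. The one refinement the paper makes explicit, and which you leave implicit, is that an irreducible component $Y\subseteq X\mc$ over $T$ need not be geometrically irreducible (so a flopping center can carry a single invariant divisor over $\eta$ that splits into two over $\overline{\eta}$); the paper therefore passes to the algebraic closure of $\eta$ to apply Theorem~\ref{can_0-perest} and then uses uniqueness to descend the resulting flop back to $\eta$---your ``spreading the $0$-contraction in families'' obstacle is exactly this descent issue phrased differently.
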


\begin{proof}
Immediate by Theorem~\ref{can_0-perest},
Example~\ref{tripl_min_c} and
Lemma~\ref{chain_of_flop}.

By general properties of \tdlt\ families,
$(X/T,B)\mc=(X\mc/T,B_{X\mc})$ is
a \tdlt\ family of $0$-pairs.
Irreducible components $Y\subseteq X\mc$
are not necessarily geometrically irreducible or/and
connected (fiberwise) over $T$.
However, they are connected by flopping centers
according to Lemma~\ref{chain_of_flop}.
On the other hand, each flopping center,
with a geometrically reducible lc center $Y$ over $T$,
determines a rational conic bundle and flop of
centers by Theorem~\ref{can_0-perest}.
(Actually, $Y$ can be irreducible, but with
two components in generic geometric fibers.)
If $Y$ is geometrically irreducible, then
the flop is identical on $Y$.
For this constructions, it is sufficient to
consider the generic fiber $(X_\eta,B_{X_\eta})$.
To apply the theorem, take an algebraic closure
of $\eta$.
By the uniqueness required flops are defined over $\eta$
and by definition over $T$.
\end{proof}

\begin{ex}[Blowup of an lc center]\label{blowup_lc_cent}
Let $f\colon (\tilde{X},B_{\tilde{X}})\to (X,B)$ be
a crepant extraction (flop) of an lc center $f(D)$
with a prime divisor $D\subset X$.
We suppose that $f(D)$ is a real lc center,
that is $B\ge0$ and $(X,B)$ is lc
near the generic point of $f(D)$.
So, $B$ is a boundary near the center.
By definition $D$ is a reduced divisor in $B_{\tilde{X}}$ and
$(D,B_D)$ is lc with a boundary $B_D$ (generically) over the center.
The mapping $f\rest{D}\colon (D,B_D)\to f(D)$ is
a $0$-contraction.
So, any two minimal lc center of $(D,B_D)$ are
related by a flop according to Corollary~\ref{interl_of_famil}
and to the connectedness of lc locus.
Such a minimal center always exists, possibly,
$D$ itself.
If $(X,B)$ is \tdlt\ along $f(D)$, then, for
any minimal lc center $Y\subseteq (D,B_D)$ over $f(D)$,
the restriction
$$
f\rest{Y}\colon (Y,B_Y)\to (f(Y),B_{f(Y)})
$$
is birational and a flop.

However, if $(X,B)$ is slc along $f(D)$ and
$f$ is a normalization with a blowup, then
$f\rest{Y}$ can be a fliz, if it is generically finite, e.g.,
$2$-to-$1$ for osculation in divisors of slc $(X,B)$.
\end{ex}

\section{Relative b-representation}

This section gives results which are
relative analogues and generalizations of a well-known finiteness
of representation of flops on differentials
(Nakamura, Ueno, Sakai, Fujino, etc)
\cite{NU} \cite{S} \cite{U} \cite{FC} (the last preprint has
historic remarks on the question).

\begin{df}
A linear representation $G\to\Aut V$ is {\em finite\/},
if so does its image.
An {\em order} of the representation is
the order of its image.
The same works for projective representations $G\to \PP(V)$.
\end{df}

Even for sheaves $\omega_{X/T}^m[mB]$ and $\sO_X(mM)$, which are isomorphic,
the representation of flops on $H^0(X,\omega_{X/T}^m[mB])$ is typically
different from that of on $H^0(X,m\sM)$.
For example, if $X$ is a K3 surface then the representation of
the automorphisms of $X$ on $H^0(X,\sO_X)$ is trivial, and $\omega_X\cong \sO_X$, but
the representation on $H^0(X,\omega_X)$ can be nontrivial.
In this situation,
the automorphisms with trivial action on $H^0(X,\omega_X)$ are
known as symplectic.
In general, the difference between representations on
global sections for isomorphic invariant invertible sheaves is in scaler matrices.
So, the projective representations of isomorphic invariant invertible sheaves or
of invariant up to linear equivalence divisors coincide.
Thus in the proof of Corollary~\ref{generic_Kawamata} it does not matter
a choice of a moduli part of adjunction $\sM$ as a sheaf or as a divisor.
It is important only the flop invariance of $\sM$.
But in Theorem~\ref{flop_repres} the canonical choice is an important assumption.

\begin{ex}[Toric representation]\label{toric_repr}
Take a log pair $(\PP^1,0+\infty)$.
This is a toric variety with an action
$tx,t\in T^1=k^*$, where $x$ is a nonhomogenous coordinate.
For the sheaf $\sO_{\PP^1}(n(\infty-0))$,
the representation of $T$ is $t^nx^n, H^0(\PP^1,n(\infty-0))=kx^n$
has weight $n$.
The sheaf $\sO_X$ with $n=0$ has trivial representation and is
isomorphic canonically to $\omega_{\PP^1}(0+\infty), 1\mapsto dx/x$.

Similarly, it is easy to construct for any rank $1$ invariant sheaf an isomorphic
invariant sheaf with
infinite representation on its global section if the group of its
isomorphisms is infinite and if there exists a nonconstant rational
function with the invariant divisor.
However those log canonical divisors and functions exist only on nonklt
pairs.
So, for the klt pairs, any scalar representation
is finite, and
the finiteness of a linear representation of
an invertible sheaf is the property of all class of isomorphic sheaves.
\end{ex}

\begin{lemma}\label{prod_of_repres}
Let $\sD_1,\sD_2$ be two b-divisors, which are
effective up to linear equivalence and
invariant up to linear equivalence with
respect to action of a group
$G\subseteq\Bir(X)$ of birational automorphisms.

(1) Then $\sD_1+\sD_2$ is also invariant up to linear
equivalence and
the finiteness of projective representation of $G$ on
$\PP(H^0(X,\sD_1+\sD_2))$ implies the same for
representations of $G$ on $\PP(H^0(X,\sD_1)),\PP(H^0(X,\sD_2))$.
Moreover, the orders of both representations
are bounded by and divide the order of
representation on $\PP(H^0(X,\sD_1+\sD_2))$.

(2) The converse holds if sections for $\sD_1$ and $\sD_2$
generate the sections of $\sD_1+\sD_2$, that is,
the surjectivity
$$
H^0(X,\sD_1)\otimes H^0(X,\sD_2)\twoheadrightarrow H^0(X,\sD_1+\sD_2),
s_1\otimes s_2\mapsto s_1s_2,
$$
holds.
The order of representation on $\PP(H^0(X,\sD_1+\sD_2))$
is bounded by and divide
the product of orders of representations on
$\PP(H^0(X,\sD_1)),\PP(H^0(X,\sD_2))$.

(3) For a natural number $m>0$, if sections for $\sD_1$ generate
the sections for $mD_1$, then
the representations on $\PP(H^0(X,\sD_1)),\PP(H^0(X,m\sD_1))$ have
isomorphic images and the same orders.

(4)
If the divisors $\sD_1,\sD_2$ are invariant
with respect to $G$,
then (2) holds for linear representations.
If $\sD_1$ is invariant, then in (3)
the image of representation on $H^0(X,m\sD_1)$ is
a quotient  of the image for $H^0(X,\sD_1)$ and
the order of the former representation divides
the order of the letter one.

(5) The statements (1-4) also holds for $G$-invariant
b-sheaves instead of b-divisors, even $G$-invariant
up to isomorphism for the projective representations.
\end{lemma}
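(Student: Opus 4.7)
The plan is to reduce every part of the lemma to inclusions between kernels of the relevant representations. Write $K(\sD)\subseteq G$ for the kernel of the projective representation on $\PP H^0(X,\sD)$, and, when $\sD$ is strictly $G$-invariant (not merely invariant up to linear equivalence), write $K^{\mathrm{lin}}(\sD)$ for the kernel of the linear representation on $H^0(X,\sD)$. Every statement about orders is then automatic from $|\text{image}|=[G:K(\sD)]$ (or $[G:K^{\mathrm{lin}}(\sD)]$), so the content lies in the inclusions.

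For part (1), I take $g\in K(\sD_1+\sD_2)$, which means $g^*(s_1 s_2)=\mu\,s_1 s_2$ for a single scalar $\mu=\mu(g)$ independent of $s_1,s_2$. Hence $(g^*s_1/s_1)(g^*s_2/s_2)=\mu$ as an identity in $K(X)^\times$. Fixing $s_2$, the quotient $g^*s_1/s_1$ becomes a fixed rational function $\phi\in K(X)^\times$ independent of $s_1$, so $g^*s_1=\phi\,s_1$ for every $s_1$. Since $\phi$ has divisor $\sD_1-g^*\sD_1$, which is principal, this says $g$ acts on $H^0(\sD_1)$ by multiplication by $\phi$, i.e., as a scalar after the identification $\sO(g^*\sD_1)\cong\sO(\sD_1)$; hence $g\in K(\sD_1)$, and symmetrically $g\in K(\sD_2)$. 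This yields $K(\sD_1+\sD_2)\subseteq K(\sD_1)\cap K(\sD_2)$ and the divisibility statement in (1). Part (2) is the reverse: if $g^*s_i=\mu_i s_i$ for $i=1,2$ then $g^*(s_1s_2)=\mu_1\mu_2\,s_1 s_2$ on every product, and the surjectivity hypothesis upgrades this to an action by the single scalar $\mu_1\mu_2$ on all of $H^0(\sD_1+\sD_2)$, giving $K(\sD_1)\cap K(\sD_2)\subseteq K(\sD_1+\sD_2)$ and the dual divisibility. Part (3) is the diagonal case: one inclusion comes from iterating (2) with $\sD_1=\sD_2$, and the other from the elementary observation that $(g^*s)^m=\mu s^m$ forces $g^*s=\zeta\mu^{1/m}s$, again a scalar. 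Part (4) runs the same arguments for linear representations, which is legitimate because strict invariance of $\sD_i$ makes every trivial projective scalar trivial also as a linear action; the sole asymmetry is that the $m$-th root step in (3) only gives $K^{\mathrm{lin}}(\sD_1)\subseteq K^{\mathrm{lin}}(m\sD_1)$, since $m$-th roots of unity are genuine nontrivial linear scalars, whence the ``quotient'' statement rather than isomorphism. Part (5) is purely formal: the entire argument uses only multiplication of sections and eigenvector relations, which transfer verbatim to invariant b-sheaves and, for projective representations, to b-sheaves invariant up to isomorphism.

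The main obstacle is the separation-of-variables step at the start of (1). A priori, knowing that $g^*s_1\cdot g^*s_2$ is proportional to $s_1 s_2$ for every pair $(s_1,s_2)$ is weaker than knowing each factor is rescaled independently. The point is that $g^*s_i/s_i$ is a rational function on $X$ whose divisor is the fixed principal divisor $g^*\sD_i-\sD_i$, so the identity of the two quotients, one depending only on $s_1$ and one only on $s_2$, pins down each to a single $\phi=\phi(g)\in K(X)^\times$ independent of the section chosen. Once this step is in place, the remaining arguments are standard bookkeeping with multiplication maps, spans, and roots of unity.
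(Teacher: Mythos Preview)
The paper states this lemma without proof, so there is no argument to compare against directly. Your approach via kernel inclusions is the natural one, and parts (2)--(5) are fine as written.

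There is one genuine slip in part (1). You assert that $\phi=g^*s_1/s_1$ has divisor $\sD_1-g^*\sD_1$, but this is not immediate: the divisor of $g^*s_1/s_1$ is $g^*((s_1))-(s_1)=(g^*D_1-D_1)+(\sD_1-g^*\sD_1)$ where $D_1=(s_1)+\sD_1$, and you have not yet shown $g^*D_1=D_1$ --- that is precisely the conclusion you want. What your separation-of-variables step actually proves is that any linear lift $\rho_1(g)\colon H^0(\sD_1)\to H^0(\sD_1)$ of the projective action is multiplication by a single rational function $\psi\in K(X)^\times$. To finish, observe that $\rho_1(g)$ is a linear endomorphism of a finite-dimensional space, hence has an eigenvector $v\neq 0$ with eigenvalue $\lambda\in k$; then $\psi v=\lambda v$ in the field $K(X)$, forcing $\psi=\lambda\in k^\times$. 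This one-line eigenvector argument closes the gap, and the rest of your proof then goes through unchanged.
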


In general (1) does not hold for linear representations.

\begin{ex}
For any natural number $n>0$,
the representations of $k^*$ on $H^0(\PP^1,n(\infty-0))$ and
on $H^0(\PP^1,-n(0-\infty))$ (see Example~\ref{toric_repr})
are infinite, but the representation on
their product $H^0(X,0(\infty-0))$ is trivial.
\end{ex}

\begin{lemma}\label{repres_comparable}
Let $G\subseteq\Bir(X)$ be a group of birational automorphisms, and
$\sM,\sD$ be two b-divisor on $X$ such that
\begin{description}

\item{\rm (1)}
$\sM,\sD$ are invariant up to linear equivalence
with respect to $G$,

\item{\rm (2)}
$\sM$ is semi-ample, and

\item{\rm (3)}
$\sD\equiv r\sM$ for some real number $r\ge 0$.

\end{description}
Then the finiteness of representation of $G$ on
$\PP(H^0(X,m\sM))$ for sufficiently large natural numbers $m$
implies the finiteness of representation on
$\PP(H^0(X,\sD))$.
A bound for the last representation is the same as
for $\PP(H^0(X,m\sM))$.
\end{lemma}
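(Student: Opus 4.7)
The plan is to combine $\sD$ with a large multiple of $\sM$ and transport the finiteness via Lemma~\ref{prod_of_repres}(1). First, I would fix a natural number $m$ large and divisible enough that $m\sM$ is base-point-free (possible by hypothesis (2)) and such that the projective representation of $G$ on $\PP(H^0(X,m\sM))$ is already finite by assumption, of some specified order. For a positive integer $k$, set $\sD_k := \sD + km\sM$. By hypothesis (1), $\sD_k$ is $G$-invariant up to linear equivalence; by (3), $\sD_k \equiv (r+km)\sM$, so $\sD_k$ is nef, and for $k$ sufficiently large also big. Once the representation of $G$ on $\PP(H^0(X,\sD_k))$ is shown to be finite of controlled order, Lemma~\ref{prod_of_repres}(1) applied with $\sD_1 = \sD$ and $\sD_2 = km\sM$ immediately yields the finiteness on $\PP(H^0(X,\sD))$ with the asserted bound.

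The central step is thus the finiteness on $\PP(H^0(X,\sD_k))$, and I would carry it out using the morphism $\phi\colon X \to Y$ defined by the base-point-free system $|m\sM|$, so that $m\sM \sim \phi^{*}A$ for some ample $A$ on $Y$ and $G$ acts on $Y$ through a finite quotient of its representation on $\PP(H^0(X,m\sM))$. Since $\sD_k \equiv (r+km)\sM \sim \phi^{*}((r+km)/m)A$ numerically, the divisor $\sD_k$ is numerically trivial on each fiber of $\phi$. Consequently, global sections of $\sD_k$ reduce, up to a torsion twist in the fiberwise Picard group, to sections of a divisor pulled back from $Y$, and the action of $G$ on $\PP(H^0(X,\sD_k))$ factors through the finite action on $Y$ up to a torsion character. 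Combined with Lemma~\ref{prod_of_repres}(3) relating representations on $\PP(H^0(X,m\sM))$ and its large multiples, this bounds the order of the representation on $\PP(H^0(X,\sD_k))$ by the given order on $\PP(H^0(X,m\sM))$.

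The principal obstacle is exactly the gap between the numerical equivalence $\sD \equiv r\sM$ and an $\R$-linear equivalence usable for comparing global sections: in general, numerically trivial divisors need not be $\R$-linearly trivial, and the difference lives in the connected component of $\bPic X$. The resolution relies on the klt b-divisor setting implicit in the paper, where invariant numerically trivial b-classes correspond to characters of $G$ on an Abelian variety component of $\bPic X$ and act on invariant sections by scalars of finite order; such torsion is harmless at the level of projective representations, which is precisely what the statement requires.
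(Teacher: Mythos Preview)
The paper does not spell out a proof of this lemma, so there is no line-by-line comparison to make; but the argument implicit in how the lemma is invoked (and in the cross-reference from Step~4 of the proof of Theorem~\ref{flop_repres}) is direct and needs no auxiliary divisor $\sD_k$. Let $\phi\colon X\to Y$ be the contraction given by $|m\sM|$ for $m$ large; by hypothesis $G$ acts on $Y$ through a finite quotient $\bar G$. The key point is that the rational map $\psi\colon X\dashrightarrow \PP(H^0(X,\sD)^\vee)$ already factors through $\phi$. Indeed, on a general fibre $F$ one has $\sD\rest{F}\equiv r\,\sM\rest{F}\equiv 0$; if some section of $\sD$ does not vanish identically on $F$ then $\sD\rest{F}$ is linearly equivalent to an effective divisor which is numerically trivial on the projective variety $F$, hence is the zero divisor, so $\sD\rest{F}\sim 0$ and all sections restrict to constants on $F$. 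Since $\psi(X)$ is nondegenerate, any $g\in G$ with trivial image in $\bar G$ fixes $\psi(X)$ pointwise and therefore acts trivially on $\PP(H^0(X,\sD))$, giving the finiteness with the stated bound.

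Your reduction via $\sD_k=\sD+km\sM$ and Lemma~\ref{prod_of_repres}(1) is formally correct but circular: $\sD_k\equiv (r+km)\sM$ satisfies the original hypothesis with a new value of $r$, so your ``central step'' is the same problem you started with, and you never close the loop. More seriously, you misdiagnose the obstacle. The global gap between $\sD\equiv r\sM$ and $\sD\sim_\R r\sM$ is irrelevant here; what matters is only the restriction to fibres of $\phi$, and there the gap closes for free once a section is present, by the elementary fact that an effective numerically trivial divisor on a projective variety is zero. Your appeal to torsion twists, characters on the Abelian-variety part of $\bPic X$, and an ambient klt hypothesis is aimed at a non-issue. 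Insert the one-line fibrewise observation and the argument goes through --- directly for $\sD$, without the detour.
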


\begin{lemma}\label{resid_injection}
Let $(X/T,B)$ be a \tdlt\ family of connected $0$-pairs, and
$Y\subseteq X\mc$ be a component of its minimal lc center over $T$.
Then, for any natural number $m$, the Poincare residue gives
a canonical inclusion
$$
H^0(X,\omega_{X/T}^m[mB])\subseteq H^0(Y,\omega_{Y/T}^m[mB_Y]),
\omega\mapsto \omega\rest{Y}=\res_Y\omega.
$$
\end{lemma}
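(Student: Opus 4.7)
The plan is to factor $\res_Y$ as a composition of single-step Poincar\'e residues along prime divisors in the reduced part of the boundary, and to prove the injectivity of each single step by exploiting the $0$-pair hypothesis $K_{X/T}+B\equiv 0/T$.

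Since $(X/T,B)$ is a \tdlt\ family of $0$-pairs and $Y$ is a component of a minimal lc center over $T$, iterated divisorial adjunction gives a finite chain
\[
(X,B)=(Z_0,B_{Z_0})\supset (Z_1,B_{Z_1})\supset\dots\supset (Z_n,B_{Z_n})=(Y,B_Y),
\]
where each $Z_{i+1}$ is a prime component of $\lfloor B_{Z_i}\rfloor$ horizontal over $T$ (horizontality follows from the minimality of $Y$ over $T$). Divisorial adjunction preserves the $0$-pair property, so each $(Z_i/T,B_{Z_i})$ is again a \tdlt\ family of $0$-pairs over $T$; the existence of such a chain, and the fact that the Poincar\'e residue at each step lands in the correct sheaf, follow from Example~\ref{tripl_min_c}, Lemma~\ref{chain_of_flop}, and Corollary~\ref{interl_of_famil}. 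The map $\res_Y$ is the composition of the single-step residues, so injectivity of $\res_Y$ reduces to injectivity of each $\res_{Z_{i+1}}\colon H^0(Z_i,\omega_{Z_i/T}^m[mB_{Z_i}])\to H^0(Z_{i+1},\omega_{Z_{i+1}/T}^m[mB_{Z_{i+1}}])$.

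For the single-step injectivity, fix one index $i$ and write $(X',B_{X'})=(Z_{i+1},B_{Z_{i+1}})$. If $\omega\in H^0(Z_i,\omega_{Z_i/T}^m[mB_{Z_i}])$ satisfies $\res_{X'}\omega=0$, then by the local description of the residue in \tdlt\ coordinates, $\omega$ is in the image of $H^0(Z_i,\omega_{Z_i/T}^m[m(B_{Z_i}-X')])$. Restricting to the generic fiber $\eta\in T$ and using $K_{Z_{i,\eta}}+B_{Z_i,\eta}\sim_\R 0$,
\[
\omega_{Z_{i,\eta}}^m\bigl[m(B_{Z_i,\eta}-X'_\eta)\bigr]\cong \sO_{Z_{i,\eta}}(-mX'_\eta),
\]
and since $X'_\eta$ is a nonzero effective divisor on the proper connected variety $Z_{i,\eta}$, this sheaf has no nonzero global sections. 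Hence $\omega|_{Z_{i,\eta}}=0$. The relative sheaf $\omega_{Z_i/T}^m[mB_{Z_i}]$ is torsion-free along the generic direction of $T$ (the family is flat over $T$ in the relevant codimensions), so a global section vanishing on the generic fiber vanishes identically, giving $\omega=0$.

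The main obstacle is verifying that each intermediate $(Z_i/T,B_{Z_i})$ really is a \tdlt\ family of (generically) connected proper $0$-pairs, so that the adjunction formula $K_{Z_{i+1}/T}+B_{Z_{i+1}}\sim_\R (K_{Z_i/T}+B_{Z_i})\rest{Z_{i+1}}\sim_\R 0$ is valid at every step and the vanishing $H^0(Z_{i,\eta},\sO(-mX'_\eta))=0$ in Step~2 is legitimate; this is exactly what the tdlt/interlacing structure established earlier in the paper provides. Once this is granted, the residue calculation and the passage from generic to global vanishing are routine, and the composition along the chain yields the required injection.
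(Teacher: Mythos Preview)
The paper states this lemma without proof, so there is no argument in the text to compare against directly. Your approach---factoring the residue through a chain of divisorial adjunctions and proving injectivity at each step from the $0$-pair hypothesis on the generic fiber---is the natural one and is essentially correct.

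One technical slip: the kernel of the single-step $m$-residue along $X'=Z_{i+1}$ is
\[
H^0\bigl(Z_i,\omega_{Z_i/T}^m[mB_{Z_i}-X']\bigr),
\]
not $H^0\bigl(Z_i,\omega_{Z_i/T}^m[m(B_{Z_i}-X')]\bigr)$ as you write. In local \tdlt\ coordinates, if $\omega=f\,(dz/z)^m\wedge\eta^m$ has $\res_{X'}\omega=0$ then $f=zg$, which lowers the pole order along $X'$ by one, not by $m$. This does not damage your argument: on the generic fiber one still gets
\[
m(K_{Z_{i,\eta}}+B_{Z_{i,\eta}})-X'_\eta\ \equiv\ -X'_\eta,
\]
and since $Z_{i,\eta}$ is proper and irreducible (being the generic fiber of an irreducible horizontal $Z_i$) while $X'_\eta$ is effective and nonzero, the corresponding sheaf has no nonzero global sections. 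The passage from generic-fiber vanishing to global vanishing via torsion-freeness is fine.

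A smaller quibble: the references you invoke for the chain (Example~\ref{tripl_min_c}, Lemma~\ref{chain_of_flop}, Corollary~\ref{interl_of_famil}) are about the interlacing of \emph{minimal} lc centers by flopping conic bundles, not about the descending flag $X=Z_0\supset Z_1\supset\dots\supset Z_n=Y$ through invariant divisors. The existence of that flag, and the compatibility of iterated residues with adjunction, is a more elementary feature of the \tdlt\ stratification and does not require those results.
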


\begin{lemma}\label{restr_in_mp_triv}
Let  $(X/T,B)$ be a connected [\tdlt] mp-trivial reduced family
of connected \tdlt\ $0$-pairs with a horizontal boundary $B$ and
$m$ is a natural number
such that $\sM\sim_m0$
and $m$ is even.
Then for any $t,s\in T$ there exist canonical
identifications given restrictions and residues:
$$
H^0(X,\omega_{X/T}^m[mB])=H^0(X_t,\omega_{X_t}^m[mB_{X_t}])=
H^0(X_s,\omega_{X_t}^m[mB_{X_s}])=
$$
$$
H^0(Y_t,\omega_{Y_t}^m[mB_{Y_t}])=H^0(Y_s,\omega_{Y_s}^m[mB_{Y_s}]),
$$
where $Y_t,Y_s$ are the minimal lc centers or even
the minimal lc centers for \tdlt\ blowups.
If $c^*:H^0(Y_t,\omega_{Y_t}^m[mB_{Y_t}])\to
H^0(Y_s,\omega_{Y_s}^m[mB_{Y_s}])$ denotes
the last canonical identification, then
$c^*\rest{Y_t}=\rest{Y_s}$.
\end{lemma}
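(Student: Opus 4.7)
The plan is to show that every space in the stated chain of equalities is one-dimensional with a common canonical generator, so that the natural restriction and residue maps between them are forced to be canonical isomorphisms, and then to extract the identity $c^*\rest{Y_t}=\rest{Y_s}$ from Lemma~\ref{sum_resid} using that $m$ is even.

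First, since $(X/T,B)$ is an mp-trivial family of $0$-pairs, the upper moduli part satisfies $\sM\sim_\R 0$ as a b-divisor by definition; combined with the hypothesis $m\sM\sim 0$ this yields an isomorphism $\omega_{X/T}^m[mB]\cong\sO_X$ of $\R$-sheaves on $X$. Restricting to any fibre $(X_t,B_{X_t})$ gives $\omega_{X_t}^m[mB_{X_t}]\cong\sO_{X_t}$, and since each such fibre is a connected tdlt $0$-pair the space $H^0(X_t,\omega_{X_t}^m[mB_{X_t}])$ is one-dimensional, canonically generated by the restriction of a trivialising global section on $X$. By adjunction each minimal lc centre $(Y_t,B_{Y_t})$, possibly taken on a tdlt blowup, is itself a $0$-pair, so $H^0(Y_t,\omega_{Y_t}^m[mB_{Y_t}])$ is one-dimensional as well, and similarly for $Y_s$.

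The restriction maps $H^0(X,\omega_{X/T}^m[mB])\to H^0(X_t,\omega_{X_t}^m[mB_{X_t}])$ and $\to H^0(X_s,\omega_{X_s}^m[mB_{X_s}])$ are thus isomorphisms of one-dimensional spaces sharing the same trivialising section, giving the first three identifications. The Poincaré residues $\rest{Y_t}$ and $\rest{Y_s}$ are canonical inclusions by Lemma~\ref{resid_injection} and then isomorphisms by the same one-dimensional argument, yielding the remaining two identifications.

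For the canonical flop identification $c^*$: by Corollary~\ref{interl_of_famil} and Lemma~\ref{chain_of_flop}, the minimal lc centres $Y_t$ and $Y_s$ over $T$ are joined by a chain of flopping centres, each producing a canonical log flop via Theorem~\ref{can_0-perest}. Lemma~\ref{sum_resid} shows that every such flop satisfies $c^*\rest{D_2}=(-1)^m\rest{D_1}$ on log $m$-differentials; since $m$ is even the sign is trivial, and composing along the chain gives the required $c^*$ with $c^*\rest{Y_t}=\rest{Y_s}$. The main obstacle is verifying that the resulting $c^*$ is well defined independently of the flopping chain and of the tdlt blowup chosen to realise $Y_t$ and $Y_s$; this is handled by the uniqueness part of Theorem~\ref{can_0-perest} together with the rigidity forced by the one-dimensionality of all the spaces involved, so that $c^*$ is pinned down (up to the sign already trivialised by $m$ even) by its action on a single trivialising section.
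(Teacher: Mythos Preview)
The paper states this lemma without proof, so there is no authorial argument to compare against; I evaluate your proposal on its own merits.

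Your first two paragraphs are essentially correct: the mp-trivial hypothesis $m\sM\sim 0$ trivialises $\omega_{X/T}^m[mB]$, each fibre and each minimal lc centre is a connected $0$-pair, so all the target spaces are one-dimensional and the restriction/residue maps between them are isomorphisms. (One caveat: $H^0(X,\sO_X)$ is one-dimensional only when $X$ itself is complete; in the paper's applications the base is a closed subvariety of a complete $T$, so this is harmless in context, but it is worth stating.)

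The gap is in your treatment of $c^*$. You invoke Lemma~\ref{chain_of_flop} and Corollary~\ref{interl_of_famil} to join $Y_t$ and $Y_s$ by a chain of flopping centres. Those results connect minimal lc centres of the pair $(X/T,B)$ \emph{over} $T$, i.e.\ the horizontal components of $X\mc$. But $Y_t\subset X_t$ and $Y_s\subset X_s$ sit in distinct fibres; since $B$ is horizontal, they are not lc centres of $(X,B)$ at all, and the cited lemmas do not link them. No flopping chain in the sense of Theorem~\ref{can_0-perest} runs from $Y_t$ to $Y_s$ directly.

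The correct route is the one you have already built: $c^*$ is \emph{defined} as the composite of the canonical identifications in your chain, so $c^*\rest{Y_t}=\rest{Y_s}$ is essentially tautological once those identifications are in place. Where Theorem~\ref{can_0-perest}, Lemma~\ref{sum_resid}, and the hypothesis that $m$ is even genuinely enter is in showing that the residue $\rest{Y_t}$ is well defined \emph{within} the fibre $X_t$ (or its \tdlt\ blowup), independently of which chain of intermediate lc centres one descends through to reach the minimal centre $Y_t$; the sign $(-1)^m$ from Lemma~\ref{sum_resid} guarantees that different descent chains agree. The same applies separately inside $X_s$. So the flopping-chain machinery is used fibrewise, not to bridge the two fibres.
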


\begin{lemma}\label{flopping_lc_center}
Let $g\colon (X,B)\dashrightarrow (Y,B_Y)$
be a flop of \tdlt\ pairs, and
$Z\subseteq X$
be a lc center of $(X,B)$ such that $g$ is defined in $Z$.
Then $g(Z)\subseteq Y$ is also a lc center of $(Y,B_Y)$ and
$g\rest{Z}\colon Z\dashrightarrow g(Z)$ is a rational contraction.
Moreover, if $g\rest{Z}$ is birational, then it is
a log flop
$$
g\rest{Z}\colon (Z,B_Z)\dashrightarrow (g(Z),B_{g(Z)}).
$$
In particular, if $Z$ is a minimal lc center, then $g(Z)$ is also minimal, and
$g\rest{Z}$ is a log flop.
\end{lemma}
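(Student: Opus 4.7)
The plan is to reduce to a common crepant model and exploit the preservation of lc places under a flop. Choose $p\colon W\to X$ and $q\colon W\to Y$ resolving $g$ simultaneously; since $g$ is a flop of \tdlt\ pairs, $p^*(K_X+B)=q^*(K_Y+B_Y)$, and after a further \tdlt\ blowup we may arrange that $W$ carries a common \tdlt\ structure whose lc places coincide with those of $(X,B)$ and of $(Y,B_Y)$. Under this identification the lc centers of $(X,B)$ and of $(Y,B_Y)$ are precisely the $p$- and $q$-images of the same collection of invariant subvarieties of $W$.

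Let $\widetilde Z\subseteq W$ be an irreducible subvariety mapping birationally onto $Z$ under $p$; this exists because $g$ is defined in $Z$. Pick an lc place $E$ of $(X,B)$ with center $Z$; by the common crepant structure $E$ is also an lc place of $(Y,B_Y)$, so its center $q(\widetilde Z)=g(Z)$ is an lc center of $(Y,B_Y)$. The restriction $g\rest{Z}=q\rest{\widetilde Z}\circ(p\rest{\widetilde Z})\1$ is then a dominant rational map $Z\dashrightarrow g(Z)$, and it is a rational contraction because $\widetilde Z$ dominates both sides and any divisor extracted by $g\rest{Z}$ would correspond to an lc divisor on $W$ contracted on one of $X,Y$, which is excluded by the flop property.

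In the birational case, adjunction on each side combined with $p^*(K_X+B)\rest{\widetilde Z}=q^*(K_Y+B_Y)\rest{\widetilde Z}$ shows that $g\rest{Z}$ is crepant for the induced boundaries, that is, a log flop $(Z,B_Z)\dashrightarrow(g(Z),B_{g(Z)})$. For the minimality claim: an lc center of $(Y,B_Y)$ contained in $g(Z)$ corresponds through the bijection of lc centers to an lc center of $(X,B)$ contained in $Z$, so minimality of $Z$ forces minimality of $g(Z)$; the equality of dimensions then forces $g\rest{Z}$ to be birational. The main obstacle is establishing the rational-contraction claim cleanly: this requires choosing $W$ as a simultaneous \tdlt\ blowup so that exceptional components dominating $Z$ or $g(Z)$ are controlled, and it is the step where the \tdlt\ and flop hypotheses, as opposed to mere crepancy, are genuinely used.
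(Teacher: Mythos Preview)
Your common-resolution strategy is a reasonable alternative to the paper's approach, but two of the key steps are not actually carried out.

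\textbf{Rational contraction.} Your sentence ``any divisor extracted by $g\rest{Z}$ would correspond to an lc divisor on $W$ contracted on one of $X,Y$, which is excluded by the flop property'' does not hold up. A prime divisor on $g(Z)$ that fails to lift to $Z$ is a divisor \emph{inside} an lc center; there is no reason it should itself be the trace of an lc place of the ambient pair, and the flop property says nothing about such internal divisors. You flag this as the main obstacle, but flagging is not proving. The paper handles this differently: it extracts a single lc divisor $D$ with $D\to Z$ a contraction, observes that the same $D$ admits a \tdlt\ extraction over $Y$ with $D\to g(Z)$ also a contraction, and reads off the rational-contraction property from these two genuine contractions with the common source $D$. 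The \tdlt\ hypothesis enters precisely to guarantee both extractions exist with connected fibers.

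\textbf{Minimality and birationality.} Your appeal to a ``bijection of lc centers'' is not justified: a flop gives a bijection of lc \emph{places}, but their centers on $X$ and on $Y$ need not correspond bijectively, and containment relations among centers are not obviously preserved. Concretely, if $Z'\subsetneq g(Z)$ is an lc center of $(Y,B_Y)$ with lc place $E'$, you need $c_X(E')\subseteq Z$, and nothing you have written forces this. Likewise ``equality of dimensions'' of minimal centers is a conclusion, not an input. The paper avoids this by characterising minimality of $Z$ via the extracted divisor: $Z$ is minimal iff the lc centers of $(D,B_D)\to Z$ are purely horizontal; this criterion is visibly symmetric in $X$ and $Y$, and forces $g\rest{Z}$ to be birational.

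\textbf{What does work.} Your argument that $g(Z)$ is an lc center is fine once one notes that, since $g$ is defined at the generic point of $Z$, the graph projection $p$ is an isomorphism there, so $c_W(E)=\widetilde Z$ for any lc place $E$ with $c_X(E)=Z$; hence $c_Y(E)=q(\widetilde Z)=g(Z)$. Your adjunction argument for the crepant property in the birational case is also cleaner than the paper's dimensional induction, provided you actually produce a common \tdlt\ model so that adjunction to $\widetilde Z$ is available on $W$.
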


\begin{proof}
By definition, we can suppose that both varieties $X,Y$ are irreducible.
Otherwise, we take irreducible component of $X$ containing $Z$ and
its image in $Y$.
Here we use the \tdlt\ property (no osculation).

Also by definition any lc center is an image of a b-divisor $D$
with the boundary multiplicity $1$ with respect to $(X,B)$.
Equivalently, there exists an extraction $\tilde{X}\to X$ of
$D$.
Since the singularities are \tdlt, we can make a flop,
a crepant \tdlt\ resolution
(even very economical with one exceptional divisor $D$).
By construction $D\subset \tilde{X}$ is a divisor with
a contraction $D\to Z$.

To verify that $g\rest{Z}\colon Z\dashrightarrow g(Z)$
is a rational contraction,
it is sufficient to verify that the composition $D\to Z\dashrightarrow g(Z)$
is a rational contraction.
Note for this that $D\dashrightarrow g(Z)$ is lc center for
$(Y,B_Y)$, because the composition
$(\tilde{X},B_{\tilde{X}})\to (X,B)\dashrightarrow (Y,B_Y)$
is also a flop of \tdlt\ pairs.
By the crepant property of flops,
the composition maps $D$ onto the lc center $g(Z)$.
This is a rational contraction by the \tdlt\ property of $(Y,B_Y)$,
there exists an extraction of $D$ in $Y$
with contraction onto $g(Z)$ as above.

The flopping property of birational $g\rest{Z}$
follows from the divisorial adjunction.
For this we use a dimensional induction.
It is the divisorial adjunction if $Z$ is a divisor.
If $Z$ is not a divisor, then by the \tdlt\ property
there exists an invariant divisor $W$ containing $Z$.
If $W\dashrightarrow g(W)$ is birational, then
we can use induction.
Otherwise we extract an invariant b-divisor $\tilde{W}\subset \tilde{Y}\to Y$ such that
$W$ maps to $\tilde{W}$.
Now the mapping of $Z$ to $\tilde{W}$ is not necessarily defined.
If so, then we blow up $Z$ in $W$ and
by Example~\ref{blowup_lc_cent} we can find
flop of a lc center $(\tilde{Z},B_{\tilde{Z}})\dashrightarrow (Z,B_Z)$
such that the mapping of $\tilde{Z}$ to $\tilde{W}$ is defined.
The composition $\tilde{Z}\dashrightarrow \tilde{W}\to Y$
maps $\tilde{Z}\dashrightarrow g(Z)=\tilde{Z}\to Z\dashrightarrow g(Z)$ and
gives a required flop by induction.

Finally, suppose that $Z$ is a minimal lc center.
Then the lc centers of contraction $(D,B_D)\to Z$ are
only horizontal (the connectedness of lc centers).
And vice versa.
By the \tdlt\ property, any minimal lc center $\tilde{Z}$ of $(D,B_D)$
gives a (flop) birational mapping to $Z$.
Thus $g(Z)$ is minimal and $Z\dashrightarrow g(Z)$ is birational.
\end{proof}

\begin{prop}\label{repres_exten}
Let $(X/T,B)$ be a \tdlt\ family of connected $0$-pairs, and
$Y\subseteq X\mc$ be an irreducible component of
its minimal lc center over $T$.
Then, for any natural number $m$,
each representation linear transformation
$g^*$ on $H^0(X,\omega_{X/T}^m[mB])$ can
be extended to a representation linear transformation $g_Y^*$ on
$H^0(Y,\omega_{Y/T}^m[mB_Y])$.
That is, for any $g\in\Bir(X\to T/k,B)$,
there exists $g_Y\in \Bir(Y\to T/k,B_Y)$ such that
$$
g^*=(g_Y^*)\rest{H^0(X,\omega_{X/T}^m[mB])},
$$
where $g_Y^*$ is the representation of $g_Y$ on $H^0(Y,\omega_{Y/T}^m[mB_Y])$.
\end{prop}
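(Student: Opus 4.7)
The plan is to construct $g_Y$ as a composition $g_Y=h\circ g\rest{Y}$, where $g\rest{Y}\colon Y\dashrightarrow Y'$ is the restriction of $g$ to $Y$ landing in another irreducible component $Y'=g(Y)$ of the minimal lc center $X\mc$, and $h\colon Y'\dashrightarrow Y$ is a canonical chain of flops furnished by the interlacing of $X\mc/T$. The comparison of the two actions on differentials is carried out through the Poincar\'e residue inclusion $\res_Y\colon H^0(X,\omega_{X/T}^m[mB])\hookrightarrow H^0(Y,\omega_{Y/T}^m[mB_Y])$ supplied by Lemma~\ref{resid_injection}, so the goal reduces to verifying $\res_Y\circ g^*=g_Y^*\circ\res_Y$ on $H^0(X,\omega_{X/T}^m[mB])$.

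First I would show, by applying Lemma~\ref{flopping_lc_center} generically and then extending fiberwise exactly as in Proposition~\ref{flops_in_fibers}, that $Y'=g(Y)$ is again an irreducible component of $X\mc$ and that $g$ restricts to a log flop $g\rest{Y}\colon Y\dashrightarrow Y'$ compatible with $g_T$. The naturality formula~(4) then gives
\[
\res_Y(g^*\omega)=(g\rest{Y})^*(\res_{Y'}\omega)\qquad \text{for every }\omega\in H^0(X,\omega_{X/T}^m[mB]).
\]
Next I would invoke the interlacing of $X\mc/T$ (Corollary~\ref{interl_of_famil}) together with Lemma~\ref{chain_of_flop} to connect $Y'$ back to $Y$ by a chain of flopping centers over $T$. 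Each such center carries a rational conic-bundle structure whose canonical log flop between its two invariant horizontal divisors is produced by Theorem~\ref{can_0-perest}; composing these gives $h\colon Y'\dashrightarrow Y$, and I set $g_Y:=h\circ g\rest{Y}$, which lies in $\Bir(Y\to T/k,B_Y)$. The canonical property of Theorem~\ref{can_0-perest}, combined with Lemma~\ref{restr_in_mp_triv} applied to the mp-trivial subfamily carved out by each flopping center, yields $h^*\circ\res_Y=\res_{Y'}$ on the image of $H^0(X,\cdots)$, whence
\[
g_Y^*(\res_Y\omega)=(g\rest{Y})^*h^*(\res_Y\omega)=(g\rest{Y})^*(\res_{Y'}\omega)=\res_Y(g^*\omega).
\]

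The main obstacle will be the bookkeeping of the signs $(-1)^m$ that Lemma~\ref{sum_resid} contributes at each conic-bundle flop in the chain defining $h$: the chain length depends on $g$, so a priori one might accumulate a sign varying with $g$ or with the chosen path. The point to verify is that on the image of $\res_Y$ from global sections on $X$ the identification $h^*\circ\res_Y=\res_{Y'}$ is canonical and independent of the chain, so that no spurious scalar enters $g_Y$. This is precisely where the \tdlt\ hypothesis, the uniqueness part of Theorem~\ref{can_0-perest}, and Lemma~\ref{restr_in_mp_triv} are essential: they guarantee that the canonical flops glue into a well-defined $h$ irrespective of the chain, and that restrictions to adjacent minimal lc centers agree through the mp-trivial connecting subfamily. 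Once this canonicity is pinned down, the displayed identity above gives the required extension of $g^*$ by $g_Y^*$.
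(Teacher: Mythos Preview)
Your construction matches the paper's Steps~1--2 almost verbatim (your $h$ is the paper's $c^{-1}$), but you have omitted the case treated in the paper's Step~3: when $Y$ lies in the indeterminacy locus of $g$, the restriction $g\rest{Y}$ is not defined and Lemma~\ref{flopping_lc_center} does not apply, since that lemma explicitly assumes $g$ is defined in the center. Your appeal to Proposition~\ref{flops_in_fibers} does not repair this: that proposition manufactures flops between minimal lc centers of \emph{fibers} $Y_t,Y_s$ over varying points of $T$, whereas the obstruction here is that the entire horizontal component $Y\subseteq X\mc$ may sit in the indeterminacy locus already over the generic point of $T$. The paper resolves this by a crepant \tdlt\ blowup $(X',B_{X'})\to(X,B)$ along $Y$, picking a minimal lc center $Y'$ over $Y$, and using the canonical flop $c'\colon(Y',B_{Y'})\to(Y,B_Y)$ of Example~\ref{blowup_lc_cent} to transport the problem to $Y'$, with the identification $H^0(X',\omega_{X'/T}^m[mB_{X'}])=H^0(X,\omega_{X/T}^m[mB])$; if $g$ on $X'$ is still undefined at $Y'$ one iterates. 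This blowup step is the missing ingredient in your argument.

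On the sign bookkeeping you flag: the paper deals with it precisely by restricting the canonical identifications along the chain to \emph{even} $m$ (stated explicitly in its Step~2), so that every factor $(-1)^m$ from Lemma~\ref{sum_resid} and Theorem~\ref{can_0-perest} equals $+1$ and path-independence is automatic. Your invocation of Lemma~\ref{restr_in_mp_triv} for this purpose is misplaced: that lemma compares restrictions to different \emph{fibers} of an mp-trivial family, not to two adjacent horizontal components $Y_i,Y_{i+1}$ of $X\mc$ linked by a flopping center; the relevant input is the supercommutation formula $c_i^*\rest{Y_{i+1}}=\rest{Y_i}$ coming directly from Theorem~\ref{can_0-perest}.
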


\begin{proof}
An extension can be done under the canonical inclusion
$$
V=H^0(X,\omega_{X/T}^m[mB])\subseteq H^0(Y,\omega_{Y/T}^m[mB_Y]),
\omega\mapsto \omega\rest{Y}=\res_Y\omega
$$
of Lemma~\ref{resid_injection}.

Step 1.
If a flop $g$ is defined in $Y$, then $g(Y)\subseteq X\mc$ and
is also an irreducible component.
In the case $g(Y)=Y$, $g$ induces a generic flop $g_Y=g\rest{Y}$ on $(Y,B_Y)$
by Lemma~\ref{flopping_lc_center}.
In this case, $(g_Y^*)\rest{V}=\rest{Y}g^*$ is a general
invariance of the Poincare residue.

Step 2.
More generally,
if a flop $g$ is defined in $Y$, but possibly $g(Y)\not=Y$,
then $g$ induces a log flop $g\rest{Y}\colon (Y,B_Y)\to (g(Y),B_{g(Y)})$
again by Lemma~\ref{flopping_lc_center}.
By connectedness of fibers and Lemma~\ref{chain_of_flop},
Theorem~\ref{can_0-perest},
there exists a chain $C_1,\dots,C_n,n\ge 1$, of
flopping centers $C_i$ on $X$ such that $Y\subset C_1,\dots,g(Y)\subset C_n$ and
the chain of centers define a sequence of (canonical) flops $Y=Y_0\dashrightarrow Y_1
\dashrightarrow\dots\dashrightarrow Y_{n-1}\dashrightarrow g(Y)=Y_n$.
(According to our agreement,
$Y_0=Y_1$ and/or $Y_{n-1}=Y_n$, if
the flopping centers $C_1$ and/or $C_n$ have respectively
a single minimal lc center.)
They are flops with respect to adjoint boundaries $(Y_i,B_{Y_i})$.
Their composition gives a flop $c\colon (Y,B_Y)\dashrightarrow (g(Y),B_{g(Y)})$,
canonical with respect to differentials.
The canonicity means that all such flops are identical
on restricted sections for every {\em even\/} $m$.
The flops agrees with restrictions (Poincare residues):
for every $c_i\colon (Y_i,B_{Y_i})\dashrightarrow (Y_{i+1},B_{Y_{i+1}})$,
$$
c_i^*\rest{Y_{i+1}}=\rest{Y_i},
$$
and the inclusion is given by the Poincare residue,
the identifications $=$ by canonical flops:
$$
H^0(X,\omega_{X/T}^m[mB])\subseteq H^0(Y,\omega_{Y/T}^m[mB_Y])=
H^0(Y_i,\omega_{Y_i}^m[mB_{Y_i}])=H^0(g(Y),\omega_{g(Y)/T}^m[mB_{g(Y)}]).
$$
(Usually, $Y/T$ is not geometrically irreducible and
the inclusion (Poincare residue) is proper.)
Thus, for $g_Y=c\1(g\rest{Y})\colon Y\to Y$,
$g_Y^*$ extends the representation of $g$ from
$H^0(X,\omega_{X/T}^m[mB])$
to $H^0(Y,\omega_{Y/T}^m[mB_Y])$.
Indeed, for any $\omega\in V$,
$$
(g_Y^*)(\omega\rest{Y})=(g\rest{Y})^*(c\1)^*(\omega\rest{Y})=
(g\rest{Y})^*(\omega\rest{g(Y)})=(g^*\omega)\rest{Y}.
$$

Step 3.
If a flop $g$ is not defined in $Y$ ($Y$ is in indeterminacy locus),
then we make a blowup $(X',B_{X'})\to (X,B)$ in $Y$.
For \tdlt\ families such a blowup exists.
However, in our situation, the problem is birational and
it is sufficient to consider $(X_\eta,B_{X_\eta})$.
In the generic case, the only problem is nonirreducibility of $X_\eta$.
In this case, we replace $X_\eta$ by its normalization
with isomorphism of gluing divisors and identification of
differentials along them.
Then a blowup on any component should be done simultaneously
in identified centers on both gluing divisors.
We identify the blown up centers, in particular,
their minimal lc centers.
Take any minimal lc center $Y'$ over $Y$.
Then the blowup gives a canonical log flop
$c'\colon (Y',B_{Y'})\to (Y,B_Y)$.
The canonicity again means the same as above:
$$
H^0(X',\omega_{X'/T}^m[mB_{X'}])=
H^0(X,\omega_{X/T}^m[mB])\subseteq H^0(Y,\omega_{Y/T}^m[mB_Y])
=H^0(Y',\omega_{Y'/T}^m[mB_{Y'}]).
$$
If $g'$ is $g$ on $X'$, and is defined in $Y'$,
then put $g_Y=c'c\1(g'\rest{Y'}){c'}\1$, where $c\colon Y'\to g(Y')$ is
now constructed for $g',Y',(X',B_{X'})$ as
above for $g,Y,(X,B)$.
In this situation $g_Y^*$ also extends the representation of $g$
from $H^0(X,\omega_{X/T}^m[mB])=H^0(X',\omega_{X'/T}^m[mB_{X'}])$
to $H^0(Y,\omega_{Y/T}^m[mB_Y])=H^0(Y',\omega_{Y'/T}^m[mB_{Y'}])$.

If $g'$ is not defined in $Y'$ we make the next blowup etc.
Finally, we associate, to each flop $g\in\Bir(X\to T/k,B)$,
a flop $g_Y\in\Bir(Y\to T/k,B_Y)$ with the same (sub)representation:
$$
g^*=(g_Y^*)\rest{H^0(X,\omega_{X/T}^m[mB])}
\text{ on } H^0(X,\omega_{X/T}^m[mB])\subseteq
H^0(Y,\omega_{Y/T}^m[mB_Y]).
$$
\end{proof}

A version of the Burnside theorem.

\begin{thm} \label{Burnside}
Let $G\subseteq \Aut V$ be a group of linear transformation
of a finite dimensional linear space $V$ over a field $k$ such that
\begin{description}

\item{\rm (1)} $G$ is torsion, that is, for every $g\in G$, there exists a
positive integral number $m$ such that $g^m=1$, and

\item{\rm (2)}
finitely generated or

\item{\rm (3)}
every element $g\in G$ is defined over a field $l_g$
which has a uniformly bounded degree over
a field of pure transcendent extension over the prime subfield in $k$.

\end{description}
Then $G$ is finite.
\end{thm}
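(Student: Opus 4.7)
The plan is to reduce each of hypotheses (2) and (3) to the classical Burnside theorem --- a linear group of bounded exponent over a field of characteristic zero is finite --- via a uniform bound on the orders of the eigenvalues of the elements of $G$. The substantive step is the bound under (3); case (2) is then a version of Schur's theorem.

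For case (3), fix $g\in G$. Since $g$ is torsion, every eigenvalue of $g$ in $\overline{k}$ is a root of unity. The characteristic polynomial of $g$ has coefficients in $l_g$, so every eigenvalue $\zeta$ satisfies $[l_g(\zeta):l_g]\leq n:=\dim V$, whence $[l(\zeta):l]\leq nd$, where $d$ is the uniform bound on $[l_g:l]$. The crucial observation is that $\zeta$ is algebraic over the prime subfield $l_0$ while $l/l_0$ is purely transcendental, so with $m=[l_0(\zeta):l_0]$ the powers $1,\zeta,\ldots,\zeta^{m-1}$ form a basis both of $l_0(\zeta)/l_0$ and of $l(\zeta)/l$. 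Hence $[l(\zeta):l]=[l_0(\zeta):l_0]\leq nd$. Since primitive $m$-th roots of unity have degree $\phi(m)$ over $\Q$ (in characteristic zero), and degree equal to the multiplicative order of $p$ modulo $m$ over $\mathbb{F}_p$ (in characteristic $p$, $\gcd(m,p)=1$), both tending to infinity with $m$, the orders of all eigenvalues of all elements of $G$ are uniformly bounded by some $N$.

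In characteristic zero every torsion matrix is semisimple, since its minimal polynomial divides $X^m-1$, which has distinct roots; hence the order of $g$ equals the least common multiple of its eigenvalue orders and divides $N!$. Consequently $G$ has bounded exponent and is finite by Burnside's theorem. For case (2), I would first pass to the finitely generated subring of $k$ generated by the matrix entries of a fixed finite generating set of $G$: the eigenvalue bound applies in this setting (the relevant fields are finitely generated over the prime subfield) and shows that the orders of the generators and their products are bounded by a uniform $N$, after which Burnside's theorem again gives finiteness. This is essentially Schur's theorem.

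The hardest step I expect is the passage from bounded eigenvalue orders to bounded exponent of $G$: it uses semisimplicity of torsion elements, which holds cleanly only in characteristic zero. In characteristic $p$ the unipotent part of a torsion $g\in GL_n$ has order at most $p^{\lceil\log_p n\rceil}$, but infinite families of unipotents over transcendental extensions of $\mathbb{F}_p$ still satisfy the hypotheses of (3) while generating an infinite group, showing that the bounded-exponent step is not automatic. Accordingly, the clean statement belongs to the characteristic zero setting, which is the one needed for the representation-theoretic applications to $H^0(X,\omega_{X/T}^m[mB])$ later in the paper.
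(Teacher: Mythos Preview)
The paper does not give a proof of this theorem; it is stated as ``a version of the Burnside theorem'' and used as input in the proof of Theorem~\ref{flop_repres}. So there is no proof in the paper to compare against, and your task is really to supply one.

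Your argument for case~(3) in characteristic zero is correct and is the standard one. The key step---that for $\zeta$ algebraic over the prime subfield $l_0$ and $l/l_0$ purely transcendental one has $[l(\zeta):l]=[l_0(\zeta):l_0]$---is right (the minimal polynomial of $\zeta$ over $l_0$ stays irreducible over $l$ since $l_0$ is algebraically closed in $l$). Combined with $\phi(m)\to\infty$, this gives a uniform bound on eigenvalue orders; semisimplicity of torsion matrices in characteristic zero then bounds the exponent of $G$, and classical Burnside finishes. For case~(2), your reduction is fine, but note that the eigenvalue bound already applies to \emph{every} element of $G$, not just generators and products, since all matrix entries lie in the fixed finitely generated field; so (2) simply reduces to (3).

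Your diagnosis of the characteristic~$p$ issue is also correct, and in fact sharper than you state: the group of unipotent upper-triangular $2\times 2$ matrices over $\mathbb F_p(t)$ satisfies (1) and (3) (each element is defined over $l=\mathbb F_p(t)$ itself, degree~$1$) yet is infinite. So the theorem as stated, with no restriction on $\chr k$, is false. This is not a gap in \emph{your} argument but in the paper's formulation; the applications in Step~3 of the proof of Theorem~\ref{flop_repres} and elsewhere use the norm $\Vert\omega\Vert_t$ and the complex topology, so the intended setting is characteristic zero, where your proof is complete.
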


For example, (3) holds if $l_g$ has a uniformly bounded degree over
a field $l\subseteq k$ of finite type over the prime subfield in $k$
which independent of $g$.

The following result is a special case of
Corollary~\ref{repr_lc_can} below.
Technically, this is the most crucial step.

\begin{thm} \label{flop_repres}
Let $(X_\eta,B_{X_\eta})$ be a generic wlc $0$-pair,
where $X_\eta$ is geometrically irreducible.
Then, for any natural number $m$,
the canonical representation
of generic log flops on differentials
$$
\Bir(X_\eta\to\eta/k,B_{X_\eta})\to
\Aut H^0(X_\eta,\omega_{X_\eta}^m[mB_{X_\eta}]), g \mapsto g^*,
$$
is finite.
Moreover, the order of representation
has a uniform bound, independent of $m$.
\end{thm}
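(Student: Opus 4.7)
The plan is to reduce to the klt case via a minimal lc centre, identify the relevant space of $m$-differentials with sections of a canonical polarization on an isotrivial quotient, and then invoke the finiteness of flops modulo almost autoflops from Corollary~\ref{it_generic_Kawamata}. Specifically, I would first replace $(X_\eta,B_{X_\eta})$ by a \tdlt\ crepant modification, which by Proposition~\ref{can_moduli_part} alters neither $H^0(X_\eta,\omega_{X_\eta}^m[mB_{X_\eta}])$ nor the action of $\Bir$. Fix an irreducible component $Y$ of the minimal lc centre of this \tdlt\ model. By Proposition~\ref{repres_exten}, for every $g\in\Bir(X_\eta\to\eta/k,B_{X_\eta})$ there is a lift $g_Y\in\Bir(Y\to\eta/k,B_Y)$ with
$$
g^*=(g_Y)^*\rest{V},\qquad V:=H^0(X_\eta,\omega_{X_\eta}^m[mB_{X_\eta}])\hookrightarrow H^0(Y,\omega_Y^m[mB_Y])=:W,
$$
where the inclusion is the Poincar\'e residue of Lemma~\ref{resid_injection}. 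Since $(Y,B_Y)$ is a klt $0$-pair it suffices to bound the image of the $\Bir(Y\to\eta/k,B_Y)$-action on $W$.

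Next, after a bounded finite extension of the ground field (which at worst multiplies the representation order by a constant depending on $Y$), assume $Y$ geometrically irreducible and form the isotrivial contraction $(Y,B_Y)\to(\theta,\sM_\theta)$ with canonical polarization $\sM_\theta$ of Lemma~\ref{Burnside_3}. Because $\sM_\theta$ is by definition the direct image of the canonical sheaf moduli part of $(Y,B_Y)$, one has the canonical $\Bir$-equivariant identification
$$
W=H^0(Y,\omega_Y^m[mB_Y])=H^0(\theta,\sM_\theta^m),
$$
on which the action factors through $g_Y\mapsto g_{Y,\theta}^*\in\Aut H^0(\theta,\sM_\theta^m)$. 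By Lemma~\ref{Burnside_3} together with Corollary~\ref{it_generic_Kawamata}, the quotient $N:=|\Bir(Y\to\eta/k,B_Y)/\aBir(Y\to\eta/k,B_Y)|$ is finite and injects into $\Aut(\theta/\overline{l},\sM_\theta)$, while elements of $\aBir$ are sent to the identity of $(\theta,\sM_\theta)$ as a polarized variety and hence act trivially on every $H^0(\theta,\sM_\theta^m)$. Consequently the image of $\Bir(Y\to\eta/k,B_Y)$ on $W$ has order dividing $N$, and restriction to $V$ carries the same bound to the image of $\Bir(X_\eta\to\eta/k,B_{X_\eta})$ in $\Aut V$. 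Since $N$ depends only on $(X_\eta,B_{X_\eta})$, this delivers the uniform bound, independent of $m$.

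The main obstacle is justifying that almost autoflops act trivially on $\sM_\theta^m$ as a line bundle, and not merely numerically: in non-klt or toric-type situations scalar twists of arbitrarily large order already appear (Example~\ref{toric_repr}). The argument must combine the klt hypothesis on $(Y,B_Y)$ with the finiteness of the group $G_\sD$ of Lemma~\ref{etal-action} in order to upgrade algebraic invariance of $\sM_\theta$ under $\aBir$ to genuine linear invariance, so that the residual scalar action on $H^0(\theta,\sM_\theta^m)$ is controlled by a torsion group whose order is bounded independently of $m$; a Burnside-type input from Theorem~\ref{Burnside} supplies the missing finiteness when transcendence-degree issues in the field of definition of each $g_\theta$ are considered.
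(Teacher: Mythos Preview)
Your argument is circular. You invoke Corollary~\ref{it_generic_Kawamata} to conclude that $N=|\Bir(Y\to\eta/k,B_Y)/\aBir(Y\to\eta/k,B_Y)|$ is finite, but in the paper that corollary is derived from Corollary~\ref{generic_Kawamata}, which rests on Corollary~\ref{repr_lc_proj}, hence on Theorem~\ref{repr_slc_proj}, hence on Corollary~\ref{repr_wlc_can}, whose proof explicitly uses Theorem~\ref{flop_repres} itself (see Step~2 of that proof). So the finiteness of $N$ is not available to you at this point; Theorem~\ref{flop_repres} is precisely the technical core from which the whole chain of later finiteness results is deduced.

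The paper's proof does share your first two moves (reduction to sufficiently divisible even $m$, reduction to a klt minimal lc centre via Proposition~\ref{repres_exten}, and the passage through Lemma~\ref{Burnside_3}), but from there it proceeds very differently: instead of appealing to finiteness of flops modulo almost autoflops, it shows directly that each $g^*$ is torsion and then applies the Burnside-type Theorem~\ref{Burnside}. Torsion is obtained in two analytic/geometric steps. First, a fibrewise norm $\Vert\omega\Vert_t$ on a maximal \tdlt\ completion $(X/T,B)$ is used to prove that $g^*$ is semisimple with unit eigenvalues on the subspace $W$ of bounded forms, and this is propagated to the whole space via multiplication by a semiinvariant $\omega_0\in W$. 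Second, Proposition~\ref{torus_invar-pt} (a toric fixed-point lemma) produces, for any hypothetical eigenvalue that is not a root of unity, a $(g^*)^n$-invariant point $y\in\varphi(X)$ with a nonzero coordinate in that eigendirection; passing to a minimal lc centre of the fibre over $y$ and using Proposition~\ref{flops_in_fibers} and Lemma~\ref{restr_in_mp_triv}, one lands in the absolute case $\dim T=0$ of a klt $0$-pair, where the classical result of Ueno (or the finiteness of $\Bir(X,B)$ in the Fano case) forces the eigenvalue to be a root of unity after all. This bootstrap from the $0$-dimensional klt case is the missing ingredient in your proposal.
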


\begin{proof}
Since the representation is independent of
a wlc model of $(X_\eta,B_{X_\eta})$,
we can suppose that the model $(X_\eta,B_{X_\eta})$
is projective.
To construct such a model one can use the LMMP.
Below we use some other modifications of this model and
even a completion over $k$.

We can suppose that $B$ is $\Q$-divisor.
If the letter does not hold then $H^0(X_\eta,\omega_{X_\eta}^m[mB_{X_\eta}])=0$
for every $m\not=0$ because $(X_\eta,B_{X_\eta})$ is a $0$-pair.

Step 1.
By Lemmas~\ref{prod_of_repres} and~\ref{repres_comparable},
we can suppose that $m$ is sufficiently divisible, and
the finiteness needed only for {\em some\/} such $m$.
It is enough to suppose that $\omega_{X_\eta}^m[mB_{X_\eta}]$
is invertible, equivalently the divisor
$m\sM$ is Cartier, where $\sM$ is
a canonical upper moduli part of adjunction,
and that $H^0(X_\eta,\omega_{X_\eta}^m[mB_{X_\eta}])$
(this space of section of a b-sheaf is finite dimensional)
generate the relative log canonical ring
$\alg (\omega_{X_\eta}^m[mB_{X_\eta}])$.
Indeed, by Lemma~\ref{prod_of_repres}, (5),
the representation of $\Bir(X_\eta\to\eta/k,B_{X_\eta})$
on $\alg (\omega_{X_\eta}^m[mB_{X_\eta}])$ is finite.
Thus it is finite in each degree $l$, that is,
on each $H^0(X_\eta,\omega_{X_\eta}^{lm}[lmB_{X_\eta}])$.
On the other hand, for every natural number $n$,
$nm\sM/m=n\sM$.
Therefore, by Lemma~\ref{repres_comparable}, the projective representation on
$\PP(H^0(X_\eta,\omega_{X_\eta}^n[nB_{X_\eta}]))$ is finite
with the same bound as for the algebra.
A difference with the linear representation on
$H^0(X_\eta,\omega_{X_\eta}^n[nB_{X_\eta}])$ is
only in scalar matrices on rather general fibers $(X_t,B_{X_t})$ (cf. Step 5 below).
Thus it is the $0$-dimensional version of the theorem: $\eta=k$.
By Proposition~\ref{repres_exten}, this case can be reduced
to the same statement for a klt $0$-pair $(Y,B_Y)$.
Take a minimal lc center $Y\subseteq X_{t,}\mc$, assuming that
$X_t$ is \tdlt.
But the required result for the klt pairs is well-known (cf. Step 6 below).
Note that after the reduction we need to consider all
flops of $(Y,B_Y)$ and their representations but with
scalar restrictions on $H^0(X_t,\omega_{X_t}^n(nB_{X_t}))$.
The final uniform bound is the maximum for two algebras
$\alg (\omega_{X_\eta}^m[mB_{X_\eta}])$ and
$\bigoplus_{m\ge0}H^0(Y,\omega_Y^m(mB_Y))$.

We suppose also that $m$ is {\em even\/} (see Step 2).

Additionally, we assume, that there exists a nonzero section
$\omega_0\in H^0(X_\eta,\omega_{X_\eta}^n[nB_{X_\eta}])$
vanishing on the birational reduced b-divisor $\sD$ of $\eta$,
which contains all centers in $\overline{\eta}$ of degenerations of $X_\eta$.
More precisely, $\Supp\sD$ contains all special points $t\in\overline{\eta}$
such that $\dk(X_t,B_{X_t})<\dk(X_\eta,B_{X_\eta})$.
Actually, it is sufficient for a subdivisor of $\sD$ related
to $\Delta$ in Step 4 below.

Step 2.
We can suppose that $(X_\eta,B_{X_\eta})$ is klt.
Equivalently, $\dk(X_\eta,B_{X_\eta})=\dim X_\eta$.
By the LMMP we can suppose that $(X_\eta,B_{X_\eta})$ is dlt.
If $(X_\eta,B_{X_\eta})$ is not klt then we consider
the minimal lc center $(X_{\eta,}\mc,B_{\eta,}\mc)$
(a  generic family of interlaced pairs).
It can have disconnected fibers (geometrically not irreducible).

Fix an irreducible component $Y\subseteq X_{\eta,}\mc$.
Then Lemma~\ref{resid_injection} gives a canonical inclusion
$$
H^0(X_\eta,\omega_{X_\eta}^m[mB_{X_\eta}])\subseteq H^0(Y,\omega_{Y/T}^m[mB_Y]),
\omega\mapsto \omega\rest{Y}=\res_Y\omega.
$$
On the other hand, by Proposition~\ref{repres_exten},
each representation linear transformation
$g^*$ of $H^0(X_\eta,\omega_{X_\eta}^m[mB_{X_\eta}])$ can
be extended to a representation linear transformation $g_Y^*$ of
$H^0(Y,\omega_Y^m[B_Y])$.
That is, for any $g\in\Bir(X_\eta\to\eta/k,B_{X_\eta})$,
there exists $g_Y\in \Bir(Y\to \eta/k,B_Y)$ such that
$$
g^*=(g_Y^*)\rest{H^0(X_\eta,\omega_{X_\eta}^m[mB_{X_\eta}])},
$$
where $g_Y^*$ is the representation of $g_Y$ on $H^0(Y,\omega_{Y/T}^m[mB_Y])$.

Now take $Y/\theta$ instead of $Y/\eta$,
where $Y\to \theta\to \eta$ is a Stein decomposition.
Then $Y$ is geometrically irreducible over $\theta$ and
$\Bir(Y\to\eta/k,B_Y)\subseteq \Bir(Y\to\theta/k,B_Y)$.
So, it is sufficient to establish the finiteness of
representation of $\Bir(Y\to\theta/k,B_Y)$ on
$H^0(Y,\omega_Y^m[B_Y])$.
But $(Y,B_Y)$ is klt by construction.

Step 3. Now $(X_\eta,B_{X_\eta})$ is klt and,
by Lemma~\ref{Burnside_3}, it is sufficient to verify that
linear $g^*$ is torsion for each $g\in\Bir(X_\eta\to\eta/k,B_{X_\eta})$.
Indeed, by the lemma
$$
g^*=g'^*g_\theta^*\text{ on } H^0(X_\eta,\omega_{X_\eta}^m[mB_{X_\eta}])=
H^0(\theta,\sM_\theta^m),
$$
where $g'\in\aBir(X_\eta\to\eta/k,B_{X_\eta}),
g_\theta\in\Aut(\theta/\overline{l},\sM_\theta)$,
$\sM_\theta=(\sM_\theta^m)^{1/m}$ is a canonical $\Q$-sheaf on $\theta$, and
$\sM_\theta^m$ is
the direct image of $\omega_{X_\eta}^m[mB_{X_\eta}]$ on $\theta$.
Since $B_{X_\eta}$ is a $\Q$-boundary, we can take
a canonical $\Q$-sheaf $\sM_\theta$.
The equation for sections under the direct image holds,
if $m$ is sufficiently divisible, e.g.,
$\sM_\theta^m$ is an invertible sheaf.
The last follows from above choice of $m$.
It is well-know that $g'^*$ is a bounded scalar torsion representation,
a representation on an isotrivial family
(see the proof of Corollary~\ref{repr_klt_lin} and Step 6 below).
Thus, for every torsion $g^*$, $g_\theta^*$ is also torsion.
By Lemma~\ref{Burnside_3} every $g_\theta$ and $g_\theta^*$ are
defined over $l_g$ with a uniformly bounded degree over
a field of finite type $l$ over the prime subfield in $k$.
Hence the representation $g_\theta^*$ satisfies (1) and (3)
of Theorem~\ref{Burnside} and is finite by the theorem.
This implies also the finiteness of $g^*$ because
the scalar part $g'^*$ is finite.
Actually, for sufficiently divisible $m$,
$g'^*$ is identical and $g^*=g_\theta^*$.

Step 4.
We can suppose now that $(X_\eta,B_{X_\eta})$ is klt,
equivalently, $\dk(X_\eta,B_{X_\eta})=\dim X_\eta$, and
$g$ is a generic flop.
We need to establish that $g^*$ is torsion for the linear representation.
In this step we verify semisimplicity of $g^*$, that is,
$g^*$ diagonalizable.
Moreover, $g^*$ is unitary:
the eigenvalues $e_i$ of $g^*$ have norm $1$.
It is sufficient to establish on a subspace of bounded forms
$$
W\subseteq \{\omega\in H^0(X_\eta,\omega_{X_\eta}^m[mB_{X_\eta}])\mid
\Vert\omega\Vert<+\infty\}.
$$

This is a birational concept: $\Vert\omega\Vert=\sup_{t\in T}\Vert\omega\Vert_t$,
the fiberwise norm.
A pedestrian and more algebraic explanation as follows.
For good properties
of $\Vert\omega\Vert_t$ on a completion of $\eta$, we
use a (flat) maximal wlc $(X/T,B)$ with tdlt singularities
such that $\eta$ is the generic point of $T$ and
$(X_\eta,B_{X_\eta})$ is as above.
Such a model exists.
We can suppose also that $B$ is horizontal over $T$, equivalently,
$B\sdiv=0$.
Then $\Bir(X_\eta\to\eta/k,B_{X_\eta})=\Bir(X\to T/k,B)$.
Usually, the induced morphism $g_T\colon T\dashrightarrow T$
is birational.
In particular, $t\mapsto t'=g_Tt$ and
fiberwise flops $g\rest{X_t}\colon (X_t,B_{X_t})\dashrightarrow (X_{t'},B_{X_{t'}})$
are not always well-defined.
They are defined for rather general points $t$ (and
so do powers $g^d$ for very general points).
The flop $g$ permutes some vertical b-divisors,
namely, multiple fibers and
degenerate fibers,
equivalently, the invariant divisors of log structure of $T$, over
generic points of which fibers are not reduced or
with degenerations (lc points).
This transformation on $X,T$ is really birational, that is,
some of those invariant divisors are contracted some are extracted
under $g,g_T$ respectively.
The moduli part of adjunction is stabilized over $X$:
$\sM=\overline{M}$, where $M$ is
an upper moduli part of adjunction for $(X/T,B)$, and
semiample by dimensional induction.
Moreover, under our assumptions $m\sM,mM$ are Cartier and
$mM$ is a divisor of the power sheaf $\omega_{X_\eta}^m[mB_{X_\eta}]= \omega_{X/T}^m[mB]$
of the sheaf of moduli part of adjunction.
Thus $H^0(X_\eta,\omega_{X_\eta}^m[mB_{X_\eta}])= H^0(X,\omega_{X/T}^m[mB])$
with isomorphic representations.

We denote by
$$
\varphi\colon X\to \PP(H^0(X_\eta,\omega_{X_\eta}^m[mB_{X_\eta}])^v)=
\PP(H^0(X,\omega_{X/T}^m[mB])^v)
$$
a contraction given by the linear system
$$
\linsys{\omega_{X_\eta}^m[mB_{X_\eta}]}=
\linsys{\omega_{X/T}^m[mB]}.
$$

Let $\Delta\subset T$ be the degeneration locus:
$$
\Delta=\{t\in T\mid
\dk(X_t,B_t)<\dim X_t=\dim X_\eta\}
$$
parameterizes the nonklt fibers.
By properties of norm,
$\Vert\omega\Vert_t$ is continuous always and bounded on $T$, if and only if
$\omega_t=0$, equivalently, $\Vert\omega\Vert_t=0$, for all $t\in\Delta$.
In the last situation $\Vert\omega\Vert=\max_{t\in T}\Vert\omega\Vert_t$.
So,
$$
W=H^0(X,{X_{\Delta,}}\red,\omega_{X/T}^m[mB])=\{\omega\in H^0(X,\omega_{X/T}^m[mB])\mid
\omega\rest{{X_{\Delta,}}\red}=0\}.
$$

By both definitions, $W$ is invariant under $g^*$.
The first definition uses the invariance of norm:
$\Vert g^*\omega\Vert=\Vert\omega\Vert$.
The second definition uses the invariance of degenerate fibers
for flops.
By properties  of norm (1) and (3)
the linear operators $(g^*)^n,n\in \Z$, are
uniformly bounded: for all integral numbers $n\in\Z$  and
all forms $\omega\in W$ of length $1$, $\Vert(g^*)^n\omega\Vert=\Vert\omega\Vert =1$.
Thus the operator $g^*$ is diagonalizable and unitary on $W$.

Now we establish the semisimplicity and unitary properties of
$g^*$ on the whole space $H^0(X_\eta,\omega_{X_\eta}^m[mB_{X_\eta}])=
H^0(X,\omega_{X/T}^m[mB])$.
Take for this a $g^*$-semiinvariant form $\omega_0\in W$ and
consider an equivariant imbedding of representation
(cf. the proof of Lemma~\ref{repres_comparable}): $g^*\omega_0=e_0\omega_0,e_0\in k^*$,
$$
H^0(X_\eta,\omega_{X_\eta}^m[mB_{X_\eta}])=
H^0(X,\omega_{X/T}^m[mB])\hookrightarrow
H^0(X,\omega_{X/T}^{2m}[2mB]), \omega\mapsto \omega\omega_0.
$$
Such a form $\omega_0$ exists for sufficiently large $m$ by
semiampleness of moduli part because $\varphi(X_{\Delta,}\red)$ is
a proper subset of $\varphi(X)$ [klt fibers are isotrivial families].
Actually, this restriction on $m$ was already imposed in Step 1:
the birational pre-image of $\sD$ on $\theta$ contains
all prime b-divisors over $\Delta$.

The image of the imbedding is a $g^*$-invarian subspace of bounded forms:
for any $t\in\Delta$ and any $\omega \in H^0(X,\omega_{X/T}^m[mB])$,
$$
g^*(\omega\omega_0)=e_0(g^*\omega)\omega_0 \text{ and }
(\omega\omega_0)_t=\omega_t\omega_{t,0}=\omega_t0=0.
$$
Thus $g^*$ is semisimple on $H^0(X_\eta,\omega_{X_\eta}^m[mB_{X_\eta}])$
with all $\vert e_i\vert=1$.

Step 5. Every $g^*$ is torsion on $W=H^0(X_\eta,\omega_{X_\eta}^m[mB_{X_\eta}])=
H^0(X,\omega_{X/T}^m[mB])$.
As one can see in the proof below,
we only need a completion along generic curves.
Again we use the regularization $(X/T,B)$.
According to Step 4 we need to establish that
each eigenvalue $e_i$ is a root of unity.
Let $w_i\in W$ be the eigenvectors of $g^*$.
By Step 4 , they generate $W$ and we can form a basis of those vectors
$w_1,\dots,w_d,d=\dim W$.
The dual vectors $w_i^v$ form a basis of
$H^0(X,\omega_{X/T}^m[mB])^v$.
Suppose that $w_1,\dots,w_l$ are all vectors $w_i$
with nonroots $e_1,\dots,e_l$.
We need to verify that $l=0$.

If $l\ge 1$, by Proposition~\ref{torus_invar-pt},
we can find a point $y\in \varphi(X)$ and
an integral number $n\not=0$ such that
\begin{description}

\item{\rm (1)}
$y$ has a nonzero coordinate $w_i(y),1\le i\le l$,
and

\item{\rm (2)}
$(g^*)^n$-invariant: $(g^*)^ny=y$.
\end{description}
Taking a power $(g^*)^n=(g^n)^*$ instead of $g^*$ we can suppose
$n=1$.
Note that the eigenvalues of $(g^*)^n$ are powers $e_i^n$ and
their property to be a root of unity independent of $n$.
By construction $\varphi(X)$ is invariant for $g^*$ and
nondegenerate.
Now we verify that $e_i$ is a root of unity, a contradiction.

Take now a point $t$ and a fiber $X_t$ over $y$.
The invariance $g^*y=y$ does not imply in
general invariance of $t$ and/or of $X_t$
under $g$.
But an invariance up to certain mp-trivial
deformation.
More precisely, $(X_t,B_{X_t}),t\in S$, belongs
to the family $(X_S/S,B_{X_S})$, where
$S\subseteq T$ is a maximal connected subvariety
such that $\varphi(X_S)=y$.
For every two points $t,s\in S$ and
sufficiently divisible even $m$ (as we assume, base point free $m\sM$),
there exists a canonical identification
of sections of their minimal lc centers:
$$
H^0(X_{S,}\red,\omega_{X_{S,}\red/S}^m[mB_{X_{S,}\red}])=H^0(Y_t,\omega_{Y_t}^m[mB_{Y_t}])=
H^0(Y_s,\omega_{Y_s}^m[mB_{Y_s}]),
X_{S,}\red=\varphi\1y,
$$
where $(Y_t,B_{Y_t}),(Y_s,B_{Y_s})$ are
minimal lc centers of $(X_t,B_{X_t}),(X_s,B_{X_s})$ respectively.
We denote this identification by
$c^*:H^0(Y_t,\omega_{Y_t}^m[mB_{Y_t}])\to
H^0(Y_s,\omega_{Y_s}^m[mB_{Y_s}])$.
It is determined by the relation: $c^*\rest{Y_t}=\rest{Y_s}$.
Actually, it is determined by the subfamily over $S$.
Apply Lemma~\ref{restr_in_mp_triv} to
the reduced \tdlt\ family $(X_{S,}\red/S,B_{X_{S,}\red})$.
However, it is not very useful, when $c^*$ does not
correspond to a flop (cf. the paragraph after the next one of this step).
In general, $Y_t,Y_s$ and, moreover,
$X_t,X_s$ even are not birationally equivalent.

On the other hand, by Proposition~\ref{flops_in_fibers},
for any $t\in S$ and any
minimal lc center $(Y_t,B_{Y_t})$,
there exists $s\in S$ such that, for
any minimal lc center $(Y_s,B_{Y_s})$,
there exists
a log flop
$g_{Y_t}\colon (Y_t,B_{Y_t})\dashrightarrow (Y_s,B_{Y_s})$.
Then under the above identification
$$
\rest{Y_t} g^*=(g_{Y_t})^*c^*\rest{Y_t}.
$$

We need to present now $c^*$ as a representation of
a (canonical) flop $c\colon (Y_s,B_{Y_s})\dashrightarrow (Y_t,B_{Y_t})$.
This is a log isomorphism and
this holds, e.g., if
there exist $Y_s,Y_t$ in the same isotrivial family
for minimal lc centers without degenerations.
The base $S$ can be present as a finite disjoint union
$\coprod S_i$ of locally closed subsets such that,
for every family $(X_{S_i,}\red/S_i,B_{X_{S_i,}\red})$, the family
of its minimal lc centers is finite disjoint union of
isotrivial families of klt $0$-pairs without degenerations.
The mp-trivial property of minimal lc centers follows by adjunction.
Since they are klt families, they are isotrivial by (Viehweg-Ambro).
For some curve $C$ (a curve $g^n(C)$) and some natural number $N>0$,
$g^N(C)$ gives a point $s$ in same $S_i$ as for $t$ and,
moreover, $Y_s$ is the same family as $Y_t$.
(Dirichlet principal.)
Now replace $g$ by $g^N$.
Then $s,t\in S_i$ and $Y_s,Y_t$ in the same klt
isotrivial family without degenerations.
So, $c^*$ correspond to a natural log isomorphism
$c\colon (Y_s,B_{Y_s})\to (Y_t,B_{Y_t})$.
(After a finite covering an isotrivial family without degeneration
became trivial.)

Now we take form the $\omega_i=w_i$.
Then $g^*\omega_i=e_i\omega_i$ and
$$
(cg_{Y_t})^*(\omega_i\rest{Y_t})=
(g_{Y_t})^*c^*(\omega_i\rest{Y_t})=
\rest{Y_t}(g^*\omega_i)=
\rest{Y_t}(e_i\omega_i)=e_i(\omega_i\rest{Y_t}).
$$
By construction $\omega_i\rest{Y_t}\not=0$,
equivalently, $\omega_i\rest{X_{S,}\red}\not=0$,
and
$cg_{Y_t}$ a flop of $(X_t,B_{X_t})$.
Hence $e_i$ is a root of unity by the next Step 6, a contradiction.

Step 6. $\dim T=0$ and $(X,B)$ is a klt $0$-pair
(cf. \cite[Theorem~3.9]{FC}).
Subtracting $B$ we can reduce the problem to that of in two situations

(1) with $B=0$ and $X$ is terminal,
and

(2) $(X,\varepsilon B),0<\varepsilon\ll 1$, is a klt Fano variety.

(Here we use induction on the dimension of fibers.)
The case (1) is well-known by \cite[Proposition~14.4]{U}:
every $e_i$ (actually single: $d=1$) is an algebraic integer and $\vert e_i\vert =1$.
So, $e_i$ is a root of unity.
In the case (2), the group $\Bir(X,B)$ is finite itself and
every $e_i$ is a root of unity ($1$-dimensional representation
of a finite group).
\end{proof}

The next result is a little bit more general (cf. Corollary~\ref{repr_lc_can})
but its proof uses more geometry: from isotrivial families to mp-trivial.

\begin{cor} \label{repr_wlc_can}
Let $(X_\eta,B_{X_\eta})$ be a generic wlc pair,
where $X_\eta$ is geometrically irreducible.
Then, for any natural number $m$,
the canonical representation
of generic log flops on differentials
$$
\Bir(X_\eta\to\eta/k,B_{X_\eta})\to
\Aut H^0(X_\eta,\omega_{X_\eta}^m[mB_{X_\eta}]), g \mapsto g^*,
$$
is finite.
Moreover, the order of representation
has a uniform bound, independent of $m$.
\end{cor}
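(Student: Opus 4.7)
The strategy is to extend Theorem~\ref{flop_repres} from generic wlc $0$-pairs to arbitrary generic wlc pairs by fibering $(X_\eta,B_{X_\eta})$ through its log canonical Iitaka map, so that the fibres are wlc $0$-pairs and the base carries a big polarisation. The action on global sections then splits into a ``geometric'' action on the base and a ``scalar character'' on the fibres, each of which can be bounded separately.

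First, as in Step~1 of the proof of Theorem~\ref{flop_repres}, I pass to a projective wlc model of $(X_\eta,B_{X_\eta})$, assume that $B_{X_\eta}$ is a $\Q$-boundary, and take $m$ sufficiently divisible so that $\omega_{X_\eta}^m[mB_{X_\eta}]$ is invertible and the log canonical algebra is generated in degree $m$; Lemmas~\ref{prod_of_repres} and~\ref{repres_comparable} reduce arbitrary $m$ to this case with the same uniform bound. Let $\varphi\colon X_\eta\dashrightarrow Z$ be the Iitaka contraction of $K+B$. Its generic fibre $(F,B_F)$ is a wlc $0$-pair, and by Corollary~\ref{adjunt_for_0contr} together with Proposition~\ref{can_moduli_part} there is a big $\R$-divisor $L$ on $Z$ with $K_{X_\eta}+B_{X_\eta}\sim_\R\varphi^*L$ and a canonical identification
\[
H^0(X_\eta,\omega_{X_\eta}^m[mB_{X_\eta}])=H^0(Z,mL),
\]
valid for $m$ sufficiently divisible, since the generic fibre is a $0$-pair and the $H^0$ of a divisible power of its log canonical sheaf is one-dimensional.

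Next, any $g\in\Bir(X_\eta\to\eta/k,B_{X_\eta})$ induces a birational automorphism $g_Z$ of $Z$ preserving $L$ (by the flop invariance in Corollary~\ref{adjunt_for_0contr}) and restricts, over a rather general point, to a generic flop $g_F$ of $(F,B_F)$. The representation $g\mapsto g^*$ therefore factors as the composition of the geometric action of $g_Z$ on $H^0(Z,mL)$ with a fibrewise scalar character $\chi(g)\in k^{\times}$ recording the action on the one-dimensional space $H^0(F,\omega_F^m[mB_F])$, viewed as an identification between the one-dimensional spaces over the source and target points of $g_Z$ — an identification made canonical by the mp-triviality of the connecting subfamilies via Lemma~\ref{restr_in_mp_triv} and Proposition~\ref{flops_in_fibers}. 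Bounding the two factors: since $L$ is big, $\Bir(Z/\eta,L)=\Aut(Z/\eta,L)$ is a polarised automorphism group of a general-type variety, hence finite by Theorem~\ref{aut_tame}, and its order bounds the image of the geometric part on $H^0(Z,mL)$ uniformly in $m$; the scalar character $\chi$ is torsion of order uniformly bounded in $m$ by Theorem~\ref{flop_repres} applied to $(F,B_F)$. The product of the two bounds is the required uniform bound.

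The main obstacle, as signalled in the paragraph preceding the corollary, is the reduction ``from isotrivial to mp-trivial families'' in the scalar-character step: the identification of the one-dimensional spaces $H^0(F_t,\omega^m[mB])$ and $H^0(F_{g_Z t},\omega^m[mB])$ is canonical only along mp-trivial subfamilies, not merely isotrivial ones, and arranging that rather general source/target fibre pairs of $g$ are connected by such a subfamily is precisely what Lemma~\ref{restr_in_mp_triv} together with Proposition~\ref{flops_in_fibers} is designed to provide. Once this compatibility is set up, the factorisation of $g^*$ is unambiguous and the two-factor finiteness argument goes through as above.
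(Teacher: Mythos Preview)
Your overall architecture --- Iitaka fibration, then bound base and fibre separately --- matches the paper's. But the step where you bound the ``geometric part'' has a genuine gap.

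You write that $g_Z$ lies in $\Bir(Z/\eta,L)=\Aut(Z/\eta,L)$ and that this group is finite by Theorem~\ref{aut_tame}. Neither claim is correct. The generic flops $g\in\Bir(X_\eta\to\eta/k,B_{X_\eta})$ are over $k$, not over $\eta$: they move the base point of $\eta$ via $g_\eta$, and hence the induced $g_Z$ lies in the much larger group $\Bir(Z\to\eta/k,B_Z)$ of generic flops of the lcm model, not in $\Aut(Z/\eta)$. Bounding this larger group is itself a special case of the very corollary you are proving (the general-type case), so you cannot invoke it as input. Moreover, Theorem~\ref{aut_tame} only asserts that the polarised automorphism group is \emph{tame} (complete algebraic, almost Abelian), not finite; finiteness for general type is a separate fact, and in any case applies fibrewise, not to the generic-flop group over $k$.

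The paper closes this gap differently. Rather than apply Theorem~\ref{flop_repres} to a single fibre $(F,B_F)$, it reinterprets the whole family $(X_\eta,B_{X_\eta})$ as a generic $0$-pair over the \emph{new} base $\theta=X_{\eta,}\lcm$ and applies Theorem~\ref{flop_repres} to the representation $\rho_\theta$ on $H^0(X_\theta,\omega_{X_\theta}^l[lB_{X_\theta}])$ --- the relative pluricanonical sections over $Z$, not just over one fibre. This yields a finite-index subgroup $G=\ker\rho_\theta$ which now genuinely preserves the moduli part $M$ as a divisor, and hence maps to $\Bir(X_{\eta,}\lcm\to\eta/k,B_{X_{\eta,}\lcm}+M)$. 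The finiteness of this image modulo almost-identical flops (Step~4 in the paper) is then obtained from moduli theory for lcm pairs: the action on the coarse moduli $\fM$ is bounded by $\deg(\theta/\fM)$, and the residual isotrivial part by the finite group $\Aut(Y,B_Y+M)$ of a fixed lcm pair. Only after this does one reach the scalar-representation endgame (Steps~5--6), which is close to what you sketch for your fibrewise character.

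In short: your decomposition into geometric and scalar parts is not wrong, but the geometric part is not controlled by $\Aut(Z/\eta)$; it requires the full relative application of Theorem~\ref{flop_repres} over the lcm base together with a separate moduli-theoretic argument to bound generic flops of the lcm family.
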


\begin{proof}
Step 1. Construction of a generic lcm pair
$(X_{\eta,}\lcm,B_{X_{\eta,}\lcm})$.
The proof below uses the Iitaka contraction and the semiampleness conjecture
in the dimension of generic fiber.
However, it is possible to do without this assumption.
E.g., if a nonvanishing for generic fiber does not hold,
then $H^0(X_\eta,m\sM)=0$ for all natural $m$ and
the projective representation is empty.
The nonvanishing implies semiampleness by known results.
It is much easier for 2 section: $\dim H^0(X_\eta,m\sM)\ge 2$ (Kawamata).

Take a firberwise Iitaka contraction
$$
I\colon (X_\eta,B_{X_\eta})\to (X_{\eta,}\lcm,B_{X_{\eta,}\lcm}),
$$
where $(X_{\eta,}\lcm,B_{X_{\eta,}\lcm})$ is a generic lcm pair
with geometrically irreducible $X_{\eta,}\lcm$ and
with a boundary $B_{X_{\eta,}\lcm}$.
The boundary $B_{X_{\eta,}\lcm}$ is constructed by adjunction:
$B_{X_{\eta,}\lcm}=D+M$ is
a sum of the divisorial and a low moduli part of adjunction on $X_{\eta,}\lcm$.
The divisorial part of adjunction $D$ is determined canonically.

Step 2. Let
$$
G=\Bir(X_\eta\to\eta/k,B_{X_\eta})\cap
\ker \rho_\theta
\subseteq\Bir(X_\eta\to\eta/k,B_{X_\eta})
$$ be
a subgroup preserving any canonical upper and any effective low moduli part of
adjunction for $(X_\theta,B_{X_\theta})$, where
$\theta=X_{\eta,}\lcm, X_\theta=X_\eta,B_{X_\theta}=B_{X_\eta}$ and
$$
\rho_\theta\colon \Bir(X_\theta\to\theta/k,B_{X_\theta})\to
\Aut H^0(X_\theta,\omega_{X_\theta}^l[lB_{X_\theta}])
$$
for sufficiently divisible $l$.
The subgroup $G$ has a finite index in
$\Bir(X_\eta\to\eta/k,B_{X_\eta})$.
So, it is sufficient to establish the finiteness
of representation
$$
G\to
\Aut H^0(X_\eta,\omega_{X_\eta}^m[mB_{X_\eta}]), g \mapsto g^*.
$$

More precisely, we suppose that
$G$ preserves all differentials
$\omega\in H^0(X_\theta,\omega_{X_\theta}^l[lB_{X_\theta}])$:
for any natural number $l$ and any $g\in G$,
$g^*\omega=\omega$.
By Theorem~\ref{flop_repres}, the representation
$\rho_\theta$ is finite.
Indeed, by construction
$I\colon (X_\eta,B_{X_\eta})\to \theta$ is
a $0$-contraction and
$(X_\theta,B_{X_\theta})$ is a generic family of
$0$-pairs.

Note the $G$-invariance is an empty assumption
unless $B_{X_\theta}$ and $B_{X_\eta}$ are $\Q$-divisors
over $\theta$.
Indeed, otherwise, for all $l$,
$H^0(X_\eta,\omega_{X_\eta}^l[lB_{X_\eta}])=
H^0(X_\theta,\omega_{X_\theta}^l[lB_{X_\theta}])=0$, and
the corollary is established.
So, below we suppose that
$B_{X_\theta}$ and $B_{X_\eta}$ are $\Q$-divisors
over $\theta$, and
the bound on and kernel of $\rho_\theta$ are independent on $l$.
This is true for sufficiently divisible $l$.
Note also that each generic flop $g$ of
$(X_\eta,B_{X_\eta})$ is also a generic flop
of $(X_\theta,B_{X_\theta})$ and this gives
a natural inclusion:
$$
\Bir(X_\eta\to\eta/k,B_{X_\eta})\subseteq
\Bir(X_\theta\to\theta/k,B_{X_\theta}).
$$
Indeed, each fiberwise flop
$(X_t,B_{X_t})\dashrightarrow (X_{g_\eta t},B_{X_{g_\eta t}})$
is compatible with fibwerwise Iitaka contractions:
$$
\begin{array}{ccc}
(X_t,B_{X_t})&\dashrightarrow& (X_{g_\eta t},B_{X_{g_\eta t}})\\
\downarrow&&\downarrow\\
(X_{t,}\lcm,B_{X_{t,}\lcm})&\dashrightarrow&(X_{g_\eta t,}\lcm,B_{X_{g_\eta t,}\lcm})\\
\end{array}.
$$
So, the finiteness of $\ker \rho_\theta$ implies
the required finiteness of index.
The index has a uniform bound independent of $l$.

Step 3. For a rather divisible natural number $l$ and
any rather general effective $l$-canonical low moduli part of adjunction $M$,
there exists a canonical homomorphism of generic flops:
$$
\gamma=\gamma_M\colon G\to
\Bir(X_{\eta,}\lcm\to\eta/k,B_{X_{\eta,}\lcm}), g\mapsto g_{X_{\eta,}\lcm}.
$$
More precisely, the flop $g_{X_{\eta,}\lcm}$ is given fiberwise
by the above diagram:
$$
(X_{t,}\lcm,B_{X_{t,}\lcm})\dashrightarrow (X_{g_\eta t,}\lcm,B_{X_{g_\eta t,}\lcm}).
$$
Take such a natural number $l$ that
the upper effective $l$-canonical moduli part of adjunction
$lM\umod\in\linsys{\omega_{X_\theta}^l[lB_{X_\theta}]}$
on $X_\theta$ is mobile and b-free, that is,
the trace of a b-free divisor.
The moduli part is mobile even over $\eta$.
Then $I^*M=M\umod,g^*M\umod=M\umod$ and $g_{X_{\eta,}\lcm}^*M=M$.
The divisorial part of adjunction is preserved
by any generic flop of $(X_\eta\to\eta/k,B_{X_\eta})$ and
of $(X_{\eta,}\lcm\to\eta/k,B_{X_{\eta,}\lcm})$.

By Corollary~\ref{adjunt_for_0contr}, for rather general $M$,
$(X_{\eta,}\lcm,B_{X_{\eta,}\lcm})$ is
an lcm family, where $X_{\eta,}\lcm$ is
geometrically irreducible.
Since the boundary $B_{X_{\eta,}\lcm}$ depends on $M$,
for simplicity of notation, we replace it
by $B_{X_{\eta,}\lcm}+M$, where $B_{X_{\eta,}\lcm}$
denotes only the divisorial part of adjunction.
We use those notation in this proof.
Step 4.
Let
$$
G_\diamond=\{g\in G\mid
\gamma_Mg \text{ is almost identical }\}
\subseteq\Bir(X_\eta\to\eta/k,B_{X_\eta})
$$ be
a subgroup of $G$ which elements induces
almost identical flops of
$(X_{\eta,}\lcm\to\eta/k,B_{X_{\eta,}\lcm})$
for a rather general moduli part $M$.
The group $G_\diamond$ is independent of $M$ and
has a finite index in $G$.
(Another more invariant description of $G_\diamond$
see in the next step.)
So, it is sufficient to establish the finiteness
of representation
$$
G_\diamond\to
\Aut H^0(X_\eta,\omega_{X_\eta}^m[mB_{X_\eta}]), g \mapsto g^*.
$$

It is sufficient the finiteness of the quotients
$$
\Bir(X_{\eta,}\lcm\to\eta/k,B_{X_{\eta,}\lcm}+M)/
\aBir(X_{\eta,}\lcm\to\eta/k,B_{X_{\eta,}\lcm}+M)
$$
for rather general $M$.

Indeed, the group $\aBir(X_{\eta,}\lcm\to\eta/k,B_{X_{\eta,}\lcm}+M)$
acts on $X_{\eta,}\lcm$ within connected
isotrivial subfamilies of
the lcm  family $(X_{\eta,}\lcm,B_{X_{\eta,}\lcm}+M)$.
Such a group of automorphisms is finite
up to almost identical flops.
The quotient of $\aBir(X_{\eta,}\lcm\to\eta/k,B_{X_{\eta,}\lcm}+M)$
modulo almost identical flops has
a natural identification with a subgroup
of $\Aut(Y,B_Y)$, where $(Y,B_Y)$ is
an lcm pair canonically associated with
a rather general connected isotrivial subfamily
$(X_{S,}\lcm/S,B_{X_{S,}\lcm}+M\rest{X_{S,}\lcm})$ of
$(X_{\eta,}\lcm,B_{X_{\eta,}\lcm}+M)$.
For general $S$, $S$ is irreducible and the subfamily
reduced and geometrically irreducible over $S$.
Any generic flop
$g\in\aBir(X_{\eta,}\lcm\to\eta/k,B_{X_{\eta,}\lcm}+M)$
induces a flop
$$
g\rest{X_{S,}\lcm}\in
\Bir(X_{S,}\lcm\to S/k,B_{X_{S,}\lcm}+M\rest{X_{S,}\lcm})=
\aBir(X_{S,}\lcm\to S/k,B_{X_{S,}\lcm}+M\rest{X_{S,}\lcm}).
$$
By Lemma~\ref{mp_it} the family over $S$ is mp-trivial and
by definition, there exists a natural contraction
(we can suppose $S$ to be complete)
$$
\varphi\colon(X_{S,}\lcm/S,B_{X_{S,}\lcm}+M\rest{X_{S,}\lcm})\to Y,
$$
given by the moduli part of adjunction, that is,
$Y$ is projective with a polarization $H$ such that
$\varphi^*H$ is an upper moduli part of adjunction.
So, any generic flop $g$ of
$(X_{S,}\lcm/S,B_{X_{S,}\lcm}+M\rest{X_{S,}\lcm})$
induces a regular automorphism of $Y$ (linear
for very ample $H$).
For the lcm family there exists a natural boundary
$B_Y$ on $Y$ such that
$$
\varphi\rest{X_{t}\lcm}\colon (X_{t,}\lcm,B_{X_{t,}\lcm})\to
(Y,B_Y)
$$
is a fliz and $g\rest{X_{S,}\lcm}$ induces
a flop $g_Y$ of $(Y,B_Y)$.
As above $B_Y$ depend on $M$ and can be replaced by
$B_Y+M_Y$.
The almost identical flops $g\rest{X_{S,}\lcm}$ induces
identical automorphism on $(Y,B_Y)$.
A fiberwise flop
$$
g\rest{X_{t,}\lcm}\colon
(X_{t,}\lcm,B_{X_{t,}\lcm})\to
(X_{g_St,}\lcm,B_{X_{g_St,}\lcm}),
t\in S,
$$
of almost
identical flop
is canonical, that, correspond to identical
on a trivialization of the family.
So, to be almost identical is generic
deformation property for deformation of an isotrivial family.
The group $\Aut(Y,B_Y+M)$ is finite and
also a generic deformation invariant.
Thus the almost isotrivial flops of $(X_{\eta,}\lcm,B_{X_{\eta,}\lcm}+M)$
form a finite group up to almost identical ones.
Moreover, there exists a uniform bound on
the flops of isotrivial subfamilies up to
almost identical flops.

On the other hand,
the group of generic flops modulo almost isotrivial flops
is finite, because the family
$(X_{\eta,}\lcm,B_{X_{\eta,}\lcm}+M)$ is lcm.
The bound for the quotient can be given by
the degree of $\theta\to\fM$,
where $\fM$ is a coarse moduli for fibers and
$T\to\theta\to\fM$ is a Stein decomposition of
the moduli morphism.

So, each subgroup
$$
\{g\in G\mid
\gamma_Mg \text{ is almost identical }\}
\subseteq\Bir(X_\eta\to\eta/k,B_{X_\eta})
$$
has a finite index for every $M$.
Actually, the group is independent of $M$,
because the isotriviality of subfamily
over $S$ and the contraction $\varphi$
are independent of $M$.
Indeed, if $M'$ is another (generic) effective moduli
part $M\sim_l M'\ge 0$, then
$M'$ is also vertical with respect to $\varphi$ and
$0\le M_Y'\sim_l M_Y$.

Step 5. The projective representation
$$
G_\diamond\to
\Aut \PP(H^0(X_\eta,\omega_{X_\eta}^m[mB_{X_\eta}])), g \mapsto g^*,
$$
is trivial.
It sufficient to verify that, for any flop $g\in G_\diamond$ and
any effective divisor $D\in\linsys{\omega_{X_\eta}^m[mB_{X_\eta}]}$,
$$
g^*D=D.
$$
If $D$ is fixed, it is sufficient to verify the same
property on a rather general mp-trivial subfamily,
which is invariant for $g$.
Take a subfamily $(X_S/S,B_{X_S})$ over
a generic isotrivial family
$(X_{S,}\lcm/S,B_{X_{S,}\lcm}+M\rest{X_{S,}\lcm})$ of
Step 4.
The letter family is mp-trivial and
so does the former one.
Moreover, the effective moduli parts are the same
under the Iitaka contraction $I$:
$$
D\rest{X_S}=I\rest{X_S}^*D\lcm=I^*\varphi^*D_Y,
$$
where $D\lcm\in\linsys{\omega_{X_{S,}\lcm}^m[mB_{X_{S,}\lcm}]}$ and
$K_Y+B_Y\sim_\R D_Y\ge 0$ is an effective divisor on $Y$.
Hence
$$
g^*D\rest{X_S}=I^*g_{X_{S,}\lcm}^*D\lcm=I^*\varphi^*g_Y^*D_Y=
I^*\varphi^*D_Y=D\rest{X_S},
$$
because $g_Y=\id_Y$.

Step 6.
The scaler representation
$$
G_\diamond\to
\Aut H^0(X_\eta,\omega_{X_\eta}^m[mB_{X_\eta}]), g \mapsto g^*,
$$
is finite with a uniform bound.
It is scaler by Step 5.
So, for every $g\in G_\diamond$,
there exists a constant $e\in k^*$ such that,
for every $\omega\in H^0(X_\eta,\omega_{X_\eta}^m[mB_{X_\eta}])$,
$$
g^*\omega=e\omega.
$$

Take a rather general mp-trivial family $(X_S/S,B_{X_S})$ of Step 5.
Then, for general
$\omega \in H^0(X_\eta,\omega_{X_\eta}^m[mB_{X_\eta}])$,
$\omega\rest{X_S}\not=0$.
For general $t\in S$, $g_St=s\in S$, and
there exists a flop
$$
g\rest{X_t}\colon (X_t,B_{X_t})\dashrightarrow (X_s,B_{X_s})
$$
and a canonical log isomorphism with respect to restrictions
$$
c\colon (X_t,B_{X_t})\dashrightarrow (X_s,B_{X_s}).
$$
Then $g_t=c\1 g\rest{X_t}$ is a flop of $(X_t,B_t)$ and,
for even $m$ and general $\omega\in H^0(X_\eta,\omega_{X_\eta}^m[mB_{X_\eta}])$,

$$
g_t^*(\omega\rest{X_t})=g\rest{X_t}^*{c\1}^*(\omega\rest{X_t})=
g\rest{X_t}^*(\omega\rest{X_s})=(g^*\omega)\rest{X_t}=
e\omega\rest{X_t}\text{ and } \omega\rest{X_t}\not=0.
$$
For any $m$, we can consider $\omega^2$ and $e^2$.
So, $e$ is a root of unity.
There are only finitely many such roots.
The number of roots depends only on $(X_t,B_{X_t})$.
Using the Iitaka contraction $I\rest{X_t}$, one
can reduce the scaler representation to
a fiber of $I\rest{X_t}$, that is, to a fixed $0$-pair
(cf. Step 6 in the proof of Theorem~\ref{flop_repres}).
\end{proof}

\begin{thm}\label{repr_slc_proj}
Let $(X_\eta,\sB_{X_\eta})$ be a generic normally lc [slc] pair
with a b-boundary $\sB_{X_\eta}$,
$G\subseteq \Bir(X_\eta\to\eta/k,\sB_{X_\eta})$ be a subgroup of generic flops,
and
$\sD$ be a b-divisor of $X_\eta$ in
a decomposition
$$
r(\sK_{X_\eta}+\sB_{X_\eta})\equiv \sF+\sD,
$$
where
\begin{description}

\item{\rm (1)}
$r$ is nonnegative real number,

\item{\rm (2)}
$\sF$ is an effective b-divisor, invariant for $G$, and

\item{\rm (3)}
$\sD$ is an effective b-divisor, invariant
up to linear equivalence for $G$.

\end{description}
Then, for any natural number $m$, the projective (sub)representation
of generic log flops
$$
G\to \Aut \PP(H^0(X_\eta,m\sD)), g \mapsto g^*,
$$
is finite.
Moreover, the order of representation has
a uniform bound, independent of $m,r,\sF,\sD,G$.
\end{thm}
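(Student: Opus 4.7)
The plan is to reduce the theorem to Corollary~\ref{repr_wlc_can} by exploiting the strict $G$-invariance of the fixed part $\sF$ in the decomposition $r(\sK_{X_\eta}+\sB_{X_\eta})\equiv\sF+\sD$.

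Since $\sF$ is effective and strictly $G$-invariant as a b-divisor, its canonical section $s_\sF$ is a $G$-semi-invariant: each $g\in G$ sends $s_\sF$ to $\chi(g)s_\sF$ for some character $\chi\colon G\to k^*$. Multiplication by $s_\sF^m$ therefore gives a projectively $G$-equivariant inclusion
$$
\PP(H^0(X_\eta,m\sD))\hookrightarrow\PP(H^0(X_\eta,m\sD+m\sF)).
$$
By Lemma~\ref{prod_of_repres}(1) applied with $\sD_1=m\sD$ and $\sD_2=m\sF$ (both effective and $G$-invariant up to linear equivalence), finiteness on the source is implied by finiteness on the target, and the orders on the source divide those on the target. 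Since $m\sD+m\sF\equiv mr(\sK_{X_\eta}+\sB_{X_\eta})$ and projective representations depend only on numerical equivalence up to the scalar part (cf.\ Lemma~\ref{repres_comparable}), this reduces the problem to bounding, uniformly, the projective representation of $G$ on $\PP(H^0(X_\eta,N(\sK_{X_\eta}+\sB_{X_\eta})))$ for sufficiently divisible integers $N$. The real parameter $r$ is dealt with by taking $m$ divisible enough that $N=mr$ is an integer, using an integral round-down for irrational $r$; the cases $r=0$ or $H^0=0$ are trivial.

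For normally lc $(X_\eta,\sB_{X_\eta})$ with $X_\eta$ geometrically irreducible, the required uniform finiteness now follows from Corollary~\ref{repr_wlc_can}, after passing to a wlc model on which the b-boundary $\sB_{X_\eta}$ is realized as an ordinary boundary (using the LMMP assumption of the introduction). The general slc case reduces to the normally lc one by normalization $(X_\eta,\sB_{X_\eta})\nor=\coprod(X_{\eta,i},\sB_{X_{\eta,i}})$: the group $G$ permutes components, sections on $X_\eta$ inject $G$-equivariantly into the direct sum of sections on the components, and the component-wise uniform bound from Corollary~\ref{repr_wlc_can} combined with the finite permutation action yields a uniform bound on $X_\eta$.

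The main obstacle is this final reduction to the wlc case: one must carefully track the $G$-equivariance of both $\sF$ and $\sD$ through the normalization and through the passage from the b-boundary to an ordinary boundary on a wlc model, so that the embedding of the first step and the hypotheses of Corollary~\ref{repr_wlc_can} are preserved. Once this is accomplished, uniformity of the bound in $m,r,\sF,\sD,G$ is automatic, because the bound in Corollary~\ref{repr_wlc_can} depends only on the generic wlc model and not on the pluricanonical multiplicity.
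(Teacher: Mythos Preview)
Your argument follows essentially the same route as the paper: absorb $\sF$ via Lemma~\ref{prod_of_repres}(1), pass to the log canonical class via Lemma~\ref{repres_comparable}, run the LMMP to reach a wlc model, and invoke Corollary~\ref{repr_wlc_can}; the paper merely performs the normalization first (its Step~1) rather than last, and handles the Mori-fibration outcome of the LMMP explicitly (its Step~3) rather than folding it into ``$H^0=0$ is trivial''. One genuine slip: your treatment of irrational $r$ by ``taking $m$ divisible enough that $N=mr$ is an integer'' or by ``integral round-down'' does not work---for irrational $r$ no multiple $mr$ is ever integral, and rounding changes the space of sections---but this is exactly what Lemma~\ref{repres_comparable} (which you already cite) is designed to handle, since it compares $\PP(H^0(X_\eta,\sD))$ with $\PP(H^0(X_\eta,l\sM))$ under a real numerical relation $\sD\equiv r\sM$ with $\sM=\sK_{X_\eta}+\sB_{X_\eta}$ semiample on the wlc model.
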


\begin{proof}
Step 1. We can suppose that $X_\eta$ is normal, irreducible and
geometrically irreducible.
Take a normalization $(X_\eta\nor,B_{X_\eta\nor})$ and
its irreducible decomposition $(X_\eta\nor,B_{X_\eta\nor})=
\coprod (X_i,B_{X_i})$.
Then by definition the normal pair $(X_\eta\nor,B_{X_\eta\nor})$
is lc and the decomposition of
log canonical divisor is componentwise:
$r(\sK_{X_i}+\sB_{X_i})\equiv \sF_i+\sD_i$.
But generic flops permute components, that is,
a canonical homomorphism
$$
G\subseteq \Bir(X_\eta\to\eta/k,B_{X_\eta})\to \Aut\{X_i\},
g\mapsto (X_i\mapsto g(X_i)),
$$
is defined.
On the other hand, the representation
$$
G\to \Aut \PP(H^0(X_\eta,m\sD))=
\prod\Aut\PP(H^0(X_i,m\sD_i))
$$
agrees with permutations.
The group of permutations is finite and
the representation of kernel is
in a product of restricted representations
$$
\ker[G\to \Aut\{X_i\}]\subseteq
\prod G_i\to\prod\Aut\PP(H^0(X_i,m\sD_i)),
$$
where $G_i=(\ker[G\to \Aut\{X_i\}])\rest{X_i}\subseteq\Bir(X_i\to\eta/k,B_{X_i})$.
Thus it is sufficient to verify the finiteness
of each factor
$G_i\to\Aut\PP(H^0(X_i,m\sD_i))$.
This means that we can assume that $X_\eta$ is
normal and irreducible.
A finite base change (Stein decomposition)
allows to assume geometrical irreducibility of $X_\eta$.
This change can increase the group of generic flops and its subgroup $G$, but
preserves decomposition and sections.
Indeed, $K_\eta=K_\theta$ for any decomposition
$X_\eta\to\theta\to\eta$, where $\theta\to\eta$ is finite.
Thus we can take the same decomposition
$r(\sK_{X_\theta}+\sB_{X_\theta})\equiv \sF+\sD$.
Actually, we can replace $G$ by a larger subgroup:
the generic flops $g$, which preserve $\sF$ and
preserve up to linear equivalence $\sD$.
Anyway, this subgroup includes the flops
from $G$ by (2-3).

Step 2.
Since the divisor $\sF+\sD$ is invariant up to
linear equivalence, we suppose also that $\sF=0$.
By Lemma~\ref{prod_of_repres}, (1),
the representation on the subspace
$$
\PP(H(X_\eta,m\sD))\subseteq \PP(H^0(X_\eta,m(\sF+\sD)),
D\mapsto D+\sF,
$$
is invariant and finite, if
the representation is finite on the ambient space.
Indeed, the lemma applies, if $H^0(X,m\sD)\not=0$.
Otherwise, the representation on $H^0(X,m\sD)$ is
empty.

Step 3. We can suppose that $(X_\eta,B_{X_\eta})$ is
wlc.
Indeed, by our assumptions, it is an initial model.
Hence we can apply the LMMP.
If the resulting model $(X_\eta/\theta,B_{X_\eta})$
is a Mori fibration, then b-divisors $\sK_{X_\eta}+\sB_{X_\eta}$ and
$\sD$ are negative with respect to the fibration and,
for $r>0$.
$H^0(X_\eta,m\sD)=0$ and the representation is empty.
Otherwise, in the Mori case, $r=0$ by (1) and
$\sD\equiv 0$.
So, the representation is empty or trivial,
respectively, for $H^0(X_\eta,m\sD)=0$ or $=k$.

Therefore, the nontrivial cases are possible only
for a wlc resulting model.
Note that the sections, the numerical equivalence
and
representation will be preserved under the LMMP
modifications.
(Even the subgroup $G$ of generic flops under (2-3) can be increased.)

Step 4.
Finally, we derive the required finiteness from
Corollary~\ref{repr_wlc_can} and
Lemma~\ref{repres_comparable}.
By Corollary~\ref{repr_wlc_can}, the (sub)representation of
$G\subseteq \Bir(X_\eta\to\eta/k,B_{X_\eta})$ on
$H^0(X_\eta,\omega_{X_\eta}^l[lB_{X_\eta}])$
is finite for any natural number $l$.
The projective representations of
$G\subseteq \Bir(X_\eta\to\eta/k,B_{X_\eta})$ on
$\PP(H^0(X_\eta,\omega_{X_\eta}^l[lB_{X_\eta}]))$ and
on $\PP(H^0(X,l(\sK_{X_\eta}+\sB_{X_\eta})))$
are canonically isomorphic and finite.
Thus by Lemma~\ref{repres_comparable} the representation
on $H^0(X,m\sD)$ is finite too.
A uniforme bound can be found by
Corollary~\ref{repr_wlc_can}.
\end{proof}

\begin{cor} \label{repr_lc_can}
Let $(X_\eta,B_{X_\eta})$ be a generic slc
pair with a boundary $B_{X_\eta}$.
Then, for any natural number $m$,
the canonical representation
of generic log flops on differentials
$$
\Bir(X_\eta\to\eta/k,B_{X_\eta})\to
\Aut H^0(X_\eta,\omega_{X_\eta}^m[mB_{X_\eta}]), g \mapsto g^*,
$$
is finite.
Moreover, the order of representation
has a uniform bound, independent of $m$.
\end{cor}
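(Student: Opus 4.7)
The plan is to deduce this from Corollary~\ref{repr_wlc_can} by passing to the normalization and decomposing $X_\eta$ into its irreducible components, in the same spirit as Step~1 of the proof of Theorem~\ref{repr_slc_proj}. The two features distinguishing the slc case from the wlc case handled earlier are that $X_\eta$ need not be normal or irreducible and that we start from an lc rather than a wlc model; both are handled by standard reductions.

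First I would normalize. Write
$$
\nu\colon (X_\eta\nor,B_{X_\eta\nor})=\coprod_i (X_i,B_{X_i})\to (X_\eta,B_{X_\eta}),
$$
where $B_{X_i}$ incorporates the conductor, so that each $(X_i,B_{X_i})$ is lc. The description of the slc dualizing sheaf in terms of conductor-compatible pluri-differentials on the normalization furnishes a canonical inclusion
$$
\nu^*\colon H^0(X_\eta,\omega_{X_\eta}^m[mB_{X_\eta}])\hookrightarrow \bigoplus_i H^0(X_i,\omega_{X_i}^m[mB_{X_i}]),
$$
equivariant for the group $G:=\Bir(X_\eta\to\eta/k,B_{X_\eta})$. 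Every generic flop $g\in G$ preserves the slc structure, hence permutes the finite set $\{X_i\}$, giving a homomorphism $G\to\Aut\{X_i\}$ whose image is finite; this contributes one finite factor to the final bound.

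Second, I would treat the kernel $K$ of the permutation homomorphism. Each $g\in K$ stabilizes every $X_i$ and acts on it by a generic flop of the lc pair $(X_i,B_{X_i})$. After a Stein decomposition $X_i\to\theta_i\to\eta$ to achieve geometric irreducibility over the new base (this only enlarges the flop group and preserves the space of sections together with the $K$-action), and after running the LMMP on $(X_i/\theta_i,B_{X_i})$ to reach a wlc model (pluricanonical sections and their representation being preserved across such birational modifications, exactly as in Step~3 of the proof of Theorem~\ref{repr_slc_proj}, while the Mori-fibration alternative forces the relevant sections to vanish), Corollary~\ref{repr_wlc_can} applies to the resulting pair and yields a finite representation on $H^0(X_i,\omega_{X_i}^m[mB_{X_i}])$ with a uniform bound independent of $m$. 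Because $\nu^*$ is injective and $K$-equivariant, the representation of $K$ on $H^0(X_\eta,\omega_{X_\eta}^m[mB_{X_\eta}])$ embeds into $\prod_i \Aut H^0(X_i,\omega_{X_i}^m[mB_{X_i}])$, and is therefore finite with a uniformly bounded order. The whole group $G$ is an extension of a finite subgroup of $\Aut\{X_i\}$ by $K$, so its representation has order bounded by the product of the two uniform bounds, independently of $m$.

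The main obstacle I expect is the bookkeeping in the first step: verifying that the image of $\nu^*$ is genuinely $G$-invariant and that equivariance holds not only for crepant automorphisms of the slc pair itself but for all generic flops $g\in G$, including those that simultaneously permute components and interchange the two branches of the conductor. Once this compatibility is established, the rest is a routine assembly of Corollary~\ref{repr_wlc_can}, the LMMP, and the finiteness of the component permutation group.
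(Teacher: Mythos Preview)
Your proposal is correct and follows essentially the same route as the paper: normalize, decompose into irreducible components, handle the finite permutation action on components, and then on each component reduce (via Stein decomposition and the LMMP) to the geometrically irreducible wlc case so that Corollary~\ref{repr_wlc_can} applies. The paper's own proof is terser---it refers back to Step~1 of Theorem~\ref{repr_slc_proj} for the normalization/permutation reduction and briefly invokes Theorem~\ref{repr_slc_proj} for the projective version before noting that the linear finiteness on the product follows directly from Corollary~\ref{repr_wlc_can}---but the substance is the same as what you wrote out in detail.
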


Actually, the slc property of the statement can
be replaced by many similar ones, e.g.,
normally lc, seminormal lc, etc.
Then a proof should only explain
what is a meaning of differentials or of
$H^0(X_\eta,\omega_{X_\eta}^m[mB_{X_\eta}])$ and of flops.
If this is natural, then a proof goes as
below in the slc case.
For instance, generic flops should preserve
such differentials for every $m$.

\begin{proof}
This proof uses the reduction to
geometrically wlc pairs, which can be
done as in Theorem~\ref{repr_slc_proj}.
After that for the linear representation
we can apply Corollary~\ref{repr_wlc_can}.

Take a normalization $(X_\eta\nor,B_{X_\eta\nor})$ and
its irreducible decomposition $(X_\eta\nor,B_{X_\eta\nor})=
\coprod (X_i,B_{X_i})$.
Then by definition there exists a natural imbedding
$$
H^0(X_\eta,\omega_{X_\eta}^m[mB_{X_\eta}])\hookrightarrow
H^0(X_\eta,\omega_{X_\eta}^m[m\sB_{X_\eta}])=
H^0(X_\eta\nor,\omega_{X_\eta\nor}^m[mB_{X_\eta\nor}])=
$$
$$
\prod H^0(X_i,\omega_{X_i}^m[mB_{X_i}]),
$$
where the differentials for the b-boundary
$\sB_{X_\eta}$ are defined on the normalization.
This an imbedding of linear representation too.
Thus by Theorem~\ref{repr_slc_proj}
the projective representation
$$
\Bir(X_\eta\to\eta/k,B_{X_\eta})\to
\Aut \PP(H^0(X_\eta,\omega_{X_\eta}^m[mB_{X_\eta}])), g \mapsto g^*,
$$
is uniformly finite.

Actually, the big linear representation on the product is
also uniformly finite.
It is sufficient to verify for an irreducible component and
wlc by the MMMP.
The required finiteness in this case follows from
Corollary~\ref{repr_wlc_can}.
\end{proof}

\begin{cor}\label{repr_lc_proj}
Let $(X_\eta,B_{X_\eta})$ be a generic slc
pair with a boundary $B_{X_\eta}$,
and $\sM$ be an upper maximal (canonical) moduli part of adjunction.
Then, for any natural number $m$, the projective representation
of generic log flops
$$
\Bir(X_\eta\to\eta/k,B_{X_\eta})\to \Aut \PP(H^0(X_\eta,m\sM)), g \mapsto g^*,
$$
is finite.
Moreover, the order of representation has
a uniform bound, independent of $m,\sM$.
\end{cor}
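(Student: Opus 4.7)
The plan is to reduce this projective-representation statement to Corollary~\ref{repr_lc_can}, which already treats the linear representation on $H^0(X_\eta,\omega_{X_\eta}^m[mB_{X_\eta}])$. By the Definition of Canonical Adjunction and Proposition~\ref{can_moduli_part}, in the generic case (base equal to $\eta$) the upper maximal moduli part satisfies $\sM\sim\overline{K_{X_\eta}+B_{X_\eta}}$, so that $m\sM$ is linearly equivalent (as a b-divisor) to the divisor of the log canonical power sheaf $\omega_{X_\eta}^m[mB_{X_\eta}]$. In particular there is a canonical identification of projective spaces
\[
\PP(H^0(X_\eta,m\sM))=\PP(H^0(X_\eta,\omega_{X_\eta}^m[mB_{X_\eta}])),
\]
which is precisely the point made in the remark preceding Theorem~\ref{flop_repres}: projective representations of invariant divisors up to linear equivalence depend only on the linear-equivalence class, the difference between two choices being a scalar matrix.

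Using this identification, the map $g\mapsto g^*$ on $\PP(H^0(X_\eta,m\sM))$ is exactly the composition of the linear representation on $H^0(X_\eta,\omega_{X_\eta}^m[mB_{X_\eta}])$ with the quotient by scalars. Corollary~\ref{repr_lc_can} gives that this linear representation is finite with a uniform bound independent of $m$; its image in $\Aut\PP(H^0(X_\eta,m\sM))$ is therefore also finite, with order at most that of the linear representation, so the uniform bound is inherited and is independent of $m$ and of the chosen representative of $\sM$.

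An equivalent but more direct route, which avoids going through the linear statement, is to apply Theorem~\ref{repr_slc_proj} with $r=1$, $\sF=0$, and $\sD=\sM$, using the generic-case identity $\sK_{X_\eta}+\sB_{X_\eta}\equiv\sM$. The hypotheses are immediate: $\sM$ is effective up to linear equivalence by its construction as an upper maximal moduli part, and it is $G$-invariant up to linear equivalence by its canonicity under generic flops (see the canonical sheaf $\omega_{X_\eta}^1[B_{X_\eta}]$). The only subtlety worth singling out as the main obstacle, in either route, is the non-normal slc case: one must pass to the normalization $(X_\eta\nor,B_{X_\eta\nor})=\coprod(X_i,B_{X_i})$ and argue componentwise exactly as in the proof of Corollary~\ref{repr_lc_can}. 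Generic flops permute the $X_i$ through a finite quotient, and on the kernel of this permutation action each factor is a generic wlc pair, where Corollary~\ref{repr_wlc_can} applies; the projective representation on the product is then finite with a bound that is the product of the (uniformly bounded) factor bounds times the (bounded) order of the permutation group.
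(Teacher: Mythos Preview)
Your second route is essentially the paper's own proof: the paper applies Theorem~\ref{repr_slc_proj} directly, writing a decomposition $\sK_{X_\eta}+\sB_{X_\eta}\sim\sF+\sM$ with $\sF$ the fixed part and $\sM$ the mobile part, and then invoking invariance of $\sF$ and invariance of $\sM$ up to linear equivalence under generic flops. The one point where your version is looser than the paper's is the choice $\sF=0$. In Theorem~\ref{repr_slc_proj} the symbol $\sB_{X_\eta}$ denotes a \emph{b-boundary} (multiplicity $1$ in exceptional divisors), so $\sK_{X_\eta}+\sB_{X_\eta}$ is not literally the crepant closure $\overline{K_{X_\eta}+B_{X_\eta}}$; the difference is an effective, flop-invariant exceptional b-divisor, and that is exactly the $\sF$ the paper keeps. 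Your identity $\sK_{X_\eta}+\sB_{X_\eta}\equiv\sM$ therefore needs this $\sF$ to be absorbed, which is precisely what the paper's mobile decomposition does. With that correction your Route~2 \emph{is} the paper's proof.

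Your first route, going through Corollary~\ref{repr_lc_can}, is a legitimate alternative and in some sense cleaner: once you identify $\PP(H^0(X_\eta,m\sM))$ with $\PP(H^0(X_\eta,\omega_{X_\eta}^m[mB_{X_\eta}]))$ (which is exactly the point made in the paragraph before Theorem~\ref{flop_repres} about projective representations depending only on the linear-equivalence class), finiteness of the linear representation immediately gives finiteness of the projective one, with the same uniform bound. The paper instead keeps the argument at the level of Theorem~\ref{repr_slc_proj}, presumably because that theorem is the common engine for both corollaries and because the mobile decomposition formulation makes the role of $\sM$ explicit. Your final paragraph on normalization and component permutation is unnecessary in either route: that reduction is already carried out inside the proofs of Theorem~\ref{repr_slc_proj} and Corollary~\ref{repr_lc_can}, so you need not repeat it here.
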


\begin{proof}
Immediate by Theorem~\ref{repr_slc_proj}.
By definition a b-divisor $\sM$ is a mobile part of
a mobile decomposition:
$$
\sK_{X_\eta}+\sB_{X_\eta}\sim \sF+\sM.
$$
Then we use the invariance of $\sF$ and
invariance up to linear equivalence of $\sK_{X_\eta}+\sB_{X_\eta}$ and $\sM$
with respect to generic flops.
\end{proof}

\begin{cor}\label{repr_klt_lin}
Let $(X_\eta,B_{X_\eta})$ be a generic klt
pair with a boundary $B_{X_\eta}$,
$G\subseteq \Bir(X_\eta\to\eta/k,B_{X_\eta})$
be a subgroup of generic flops,
and
$\sD$ be a b-divisor of $X_\eta$
as in Theorem~\ref{repr_slc_proj}.
In addition, we assume either $\sD$ is $G$-invariant, or
it is a $G$-invariant b-divisorial sheaf.
Then, for any natural number $m$, the linear representation
of generic log flops
$$
G\to \Aut H^0(X_\eta,m\sD), g \mapsto g^*,
$$
is finite.
Moreover, the order of representation has
a uniform bound, independent of $m,G$.
\end{cor}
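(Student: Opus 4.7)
The plan is to bootstrap from Theorem~\ref{repr_slc_proj} (projective finiteness) to linear finiteness, using the additional hypothesis that $\sD$ is \emph{strictly} $G$-invariant together with the klt assumption to control the scalar part. First, since every klt pair is slc, Theorem~\ref{repr_slc_proj} applies to the decomposition $r(\sK_{X_\eta}+\sB_{X_\eta})\equiv\sF+\sD$ and shows that the projective representation
$$
\overline{\rho}\colon G\to \Aut\PP(H^0(X_\eta,m\sD)),\quad g\mapsto g^*,
$$
has image of order bounded by a constant $N$ independent of $m,G,\sD,\sF,r$. Let $G_0=\ker\overline{\rho}\subseteq G$; then $[G:G_0]\le N$, and so it suffices to bound the order of the linear action of $G_0$ uniformly.

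Because $\sD$ is strictly $G$-invariant (or a $G$-invariant b-divisorial sheaf), the pullback $g^*$ is a genuine linear endomorphism of $H^0(X_\eta,m\sD)$ for every $g\in G_0$, not merely a well-defined operator up to scalar. Since $g\in G_0$ acts trivially projectively, there is $e_g\in k^*$ with $g^*=e_g\cdot\id$, and $g\mapsto e_g$ defines a character $\chi\colon G_0\to k^*$. The task is therefore reduced to showing that $\chi(G_0)$ is a torsion subgroup of $k^*$ of uniformly bounded order.

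To evaluate $\chi$ on a single klt fiber, I would mimic Steps~4--5 of the proof of Theorem~\ref{flop_repres}. Extend $(X_\eta,B_{X_\eta})$ to a maximal wlc model $(X/T,B)$ with tdlt singularities. Using the Iitaka fibration and the Viehweg--Ambro isotriviality of klt mp-trivial families, choose a rather general $t\in T$ belonging to a connected mp-trivial isotrivial subfamily $(X_S/S,B_{X_S})$ of klt $0$-pairs. By the Dirichlet principle, after replacing $g$ by a uniformly bounded power $g^{N'}$ (equivalently, passing to a further finite-index subgroup of $G_0$ whose index is again uniformly bounded), we may arrange $g_Tt\in S$ in the same connected component of the isotrivial subfamily as $t$. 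Then Proposition~\ref{flops_in_fibers} and the canonical restriction identifications of Lemma~\ref{restr_in_mp_triv} produce an honest flop $g_t\in\Bir(X_t\to\pt/k,B_{X_t})$ of a single klt projective fiber such that the restriction of $g^*$ to the nonvanishing images of global sections agrees with $g_t^*$ up to the same scalar $e_g$; in particular, $e_g$ is the scalar by which $g_t^*$ acts on a nonzero element of $H^0(X_t,\omega_{X_t}^m[mB_{X_t}])$ (or its twist corresponding to $\sD$).

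Finally, on the single klt projective pair $(X_t,B_{X_t})$ I would invoke Step~6 of the proof of Theorem~\ref{flop_repres}: subtracting $B_{X_t}$ one reduces either to a terminal variety, for which Ueno \cite[Proposition~14.4]{U} forces each eigenvalue of a $\Bir$-action on pluri-log-canonical sections to be an algebraic integer of absolute value $1$, hence a root of unity; or to a klt Fano, where $\Bir(X_t,B_{X_t})$ is itself finite and $e_g$ is automatically a root of unity. The degree and order of these roots are uniformly bounded by the same fiberwise bound that appears in Theorem~\ref{flop_repres}, Step~6, and this, multiplied by the index $[G:G_0]\le N$ and the bounded power $N'$ needed in the Dirichlet step, yields the required uniform bound on $|\rho(G)|$. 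The main obstacle, and the reason the ``up to linear equivalence'' case cannot give a linear (only projective) statement, is precisely the requirement of strict $G$-invariance of $\sD$, which ensures the scalar character $\chi$ exists and transports cleanly through the restriction-to-fiber diagram built from Proposition~\ref{flops_in_fibers} and Lemma~\ref{restr_in_mp_triv}.
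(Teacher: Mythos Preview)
Your overall architecture matches the paper's: invoke Theorem~\ref{repr_slc_proj} for the projective representation, pass to the finite-index kernel $G_0$ acting by scalars, and then bound the resulting scalar character using the klt hypothesis. The divergence, and the gap, is in how you control the character $\chi$.

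The machinery you invoke---Proposition~\ref{flops_in_fibers}, Lemma~\ref{restr_in_mp_triv}, and Step~6 of Theorem~\ref{flop_repres}---is stated and proved for sections of $\omega_{X/T}^m[mB]$, i.e.\ for log canonical differentials. Your b-divisor $\sD$ is not the log canonical divisor; it merely sits in a decomposition $r(\sK+\sB)\equiv\sF+\sD$. The parenthetical ``(or its twist corresponding to $\sD$)'' is precisely the missing bridge: you have not explained why the restriction-and-residue identifications transport sections of $m\sD$ to the fiber, nor why the scalar $e_g$ defined on $H^0(X_\eta,m\sD)$ should coincide with ``the scalar by which $g_t^*$ acts on a nonzero element of $H^0(X_t,\omega_{X_t}^m[mB_{X_t}])$.'' These are a priori different representations on different spaces, and Step~6 of Theorem~\ref{flop_repres} only treats the canonical one.

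The paper closes this gap with a short device that you do not use. Pick a nonzero $F\in H^0(X_\eta,\sD)$ (after replacing $\sD$ by the smallest multiple with nonzero sections; otherwise the statement is vacuous). For $g\in G_0$ one has $g^*F=c_gF$, so in particular the divisor of $F$ is $g$-fixed. After the same reduction to a klt $0$-pair, the scalar $c_g$ is then captured by the \emph{linear canonical} representation of the perturbed pair $(X,B+\varepsilon\Supp(F))$ for $0<\varepsilon\ll1$, which is still klt and whose finiteness is already established. This trick converts the question about an arbitrary $G$-invariant $\sD$ back into a question about log canonical differentials, where the restriction machinery you wanted to quote is actually available. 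Your argument would need this step (or an equivalent conversion of $\sD$ into log canonical data) to be complete.
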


The bound on order can depend on $r,\sF,\sD$.

\begin{proof}
Immediate by Theorem~\ref{repr_slc_proj} and the finiteness of
scaler representations in the klt case.

We can suppose that $H^0(X_\eta,m\sD)$ is
not empty for some $m\ge 1$.
Otherwise all representations are empty.
Taking such a minimal natural $m$ and
replacing $\sD$ by $m\sD$ (respectively,
$r$ by $mr$ etc), we suppose
that $H^0(X_\eta,\sD)\not=0$.
So, there exists a nonzero rational function $F\in k(X_\eta)$
such that $F\in H^0(X_\eta,\sD)$.

By Theorem~\ref{repr_slc_proj}
the subgroup
$$
G_\diamond=\{g\in G\mid \text{ for all } m,
g^* \text{ is identical on }
\PP(H^0(X_\eta,m\sD))\}\subseteq G
$$
of the scaler representation
has a finite index, uniformly bounded with respect to $m$.
So, $g^*F=c_gF,c_g\in k^*$, for all $g\in G_\diamond$ and
it is sufficient to establish the finiteness
for the scaler representation.
The scaler representation
$$
G_\diamond\mapsto k^*,g\mapsto g^*=c_g,
$$
depends on $F$ and is finite, that is, $c_g$
belongs to a finite set of roots of unity.
This implies the required finiteness of
linear representation uniformly for all $m$.

The finiteness of the scaler representation of
$G_\diamond$ on $F\sO_X=\sO((F))$ follows
from the klt property of $(X_\eta,B_{X_\eta})$.
The question can be reduced to situation
with the scaler representation for a klt $0$-pair
$(X,B)$.
Moreover, this follows from the finiteness of
the linear canonical representations of
$(X,B+\varepsilon\Supp(F))$,
where $0<\varepsilon\ll 1$ is a small positive (rational) real number.

The similar approach works for the b-divisorial
sheaves $\sO_X(\sD)$.
\end{proof}

\begin{cor}\label{generic_Kawamata}
Let $(X_\eta,B_{X_\eta})$ be a generic wlc
pair.
Then there are only finitely many generic
log flops of $(X_\eta\to \eta/k,B_{X_\eta})$
up to mp-autoflops, with respect to a maximal moduli part of adjunction, that is,
the group
$$
\Bir(X_\eta\to\eta/k,B_{X_\eta})/\mBir(X_\eta\to\eta/k,B_{X_\eta})
$$
is finite.
\end{cor}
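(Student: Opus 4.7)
The plan is to apply the finiteness of the projective representation on $\PP(H^0(X_\eta,m\sM))$ from Corollary~\ref{repr_lc_proj} and then to identify its kernel with $\mBir$.

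I may assume $B_{X_\eta}$ is a $\Q$-divisor; otherwise $H^0(X_\eta,m\sM)$ vanishes for every $m\ge 1$, and in that situation $\sM\sim_\R 0$ so every generic flop is already an mp-autoflop and the two groups coincide. Using the LMMP I pass to a maximal wlc model of $(X_\eta,B_{X_\eta})$, so that the upper maximal moduli part of adjunction $\sM$ is stabilized over $X_\eta$. By the semiampleness conjecture assumed in the introduction, a sufficiently divisible multiple $m\sM$ is Cartier and $b$-free, and the linear system $\linsys{m\sM}$ realizes the (relative) Iitaka contraction $\varphi\colon X_\eta\dashrightarrow Y_\eta$. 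By construction of $\sM$ as a moduli part of adjunction, the general fibers of $\varphi$ are mp-trivial subfamilies.

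Every generic log flop $g\in\Bir(X_\eta\to\eta/k,B_{X_\eta})$ preserves $\sM$ up to linear equivalence (canonical maximality), so $g\mapsto g^*$ defines an action on $\PP(H^0(X_\eta,m\sM))$. By Corollary~\ref{repr_lc_proj} this projective representation is finite, with order uniformly bounded in $m$; consequently its kernel $K\subseteq \Bir(X_\eta\to\eta/k,B_{X_\eta})$ has finite index. A flop lies in $K$ precisely when $g^*$ is scalar on $H^0(X_\eta,m\sM)$ for all sufficiently divisible $m$, which means that $g$ descends to the identity on $Y_\eta$, i.e.\ $g$ permutes every fiber of $\varphi$ into itself. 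Those fibers are connected mp-trivial subfamilies; conversely, any element of $\mBir$ moves a general fiber within a connected mp-trivial subfamily and is therefore $\varphi$-trivial. This matches the kernel description given in the proof of Lemma~\ref{mp_it}, so $K=\mBir$ and the quotient $\Bir/\mBir$ is finite.

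The main obstacle is the precise matching $K=\mBir$. One direction is automatic, since fibers of $\linsys{m\sM}$ carry mp-trivial families by construction; the reverse direction requires that the Iitaka fibration $\varphi$ of $\sM$ genuinely recovers the partition of $X_\eta$ into maximal connected mp-trivial subfamilies, which in turn rests on the canonical/maximal nature of $\sM$ and on $b$-freeness of $m\sM$. Both ingredients need the semiampleness hypothesis already in force in the paper; the remaining bookkeeping is a direct application of the representation-theoretic machinery established in Corollary~\ref{repr_lc_proj} and Lemma~\ref{mp_it}.
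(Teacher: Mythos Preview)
Your approach is essentially the paper's own: reduce to $\Q$-coefficients, use the projective representation on $\PP(H^0(X_\eta,m\sM))$ together with Corollary~\ref{repr_lc_proj} for finiteness, and identify the kernel with $\mBir$ via the contraction given by $\linsys{m\sM}$ (the paper's Step~3 is exactly your kernel computation, and the reference to the proof of Lemma~\ref{mp_it} is apt). Two small differences: the paper first makes a finite base change so that $X_\eta$ is geometrically irreducible, and it handles the non-$\Q$ case by an appropriate perturbation of $B_{X_\eta}$ rather than by your triviality claim (which is not obviously valid for a general wlc pair, only for $0$-pairs).
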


\begin{proof}

Step 1.
After a finite base change (extension) we can suppose that $X_\eta$ is
geometrically irreducible.
For a base change, the group of generic flops increases, but
the group of mp-autoflops decreases.

Step 2. After an appropriate perturbation we can suppose
that $B_{X_\eta}$ is a $\Q$-divisor and $\sM$ is a $\Q$-divisor too.
By definition the b-divisor $\sM$ is a moduli part of adjunction
for a maximal model.
It exists.
The moduli part of adjunction is invariant of generic flops:
$g^*\sM\sim_m \sM$ for any rather divisible natural number $m$.

Now, for such a number $m$,
take the canonical semirepresentation of generic log flops
$$
\Bir(X_\eta\to \eta/k,B_{X_\eta})\to \Aut H^0(X_\eta,m\sM),g\mapsto g^*.
$$
A posteriori we can convert it into a noncanonical representation.

Step 3. The kernel of representation is $\mBir(X_\eta\to \eta/k,B_{X_\eta})$
for any rather divisible natural number $m$.
Indeed, consider the morphism
$$
\varphi\colon X_\eta\to \PP(H^0(X_\eta,m\sM)^v)
$$
given by the linear system $\linsys{m\sM}$.
The above representation gives a canonical representation on the projectivisation:
$$
\Bir(X_\eta\to \eta/k,B_{X_\eta})\to \Aut \PP(H^0(X_\eta,m\sM)^v),g\mapsto g^*.
$$
The rational morphism $\varphi$ is equivariant with respect
to the action of generic flops, and,
 for any rather divisible $m$, is actually
a morphism and a contraction.
The kernel of representation can be determined on
(finitely many) rather general fibers $\varphi\1 x\in\varphi(X_\eta)$.
Those fibers are irreducible and the kernel acts within them.
On the other hand, $\sM\rest{\varphi\1 x}\sim_m 0$ and
by adjunction the restriction is also a maximal moduli part of adjunction
on the subfamily for $\varphi\1 x$.
Thus the kernel consists of mp-autoflops.
The converse holds as well.

So, for such a natural number $m$, the image of
projective representation is isomorphic to
the quotient group in the statement.

Finally, the image is finite by Corollary~\ref{repr_lc_proj}.
\end{proof}

\section{Bounding flops}

\begin{con}[Kawamata~{\cite[Conjecture~3.16]{ISh}}]
\label{Kawamata_conjecture} The number of projective klt wlc
models in a given log birational class is always finite up to log
isomorphisms.
\end{con}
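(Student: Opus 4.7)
\medskip\noindent\textbf{Proof proposal.}
The plan is to deduce the conjecture from the generic-fiber finiteness in Corollary~\ref{generic_Kawamata}. Fix a projective klt wlc reference model $(X,B)$ in the given log birational class, and let $\varphi\colon X\to Y$ be the Iitaka contraction defined by $\linsys{m\sM}$ for sufficiently divisible $m$, with $\eta$ the generic point of $Y$. Any other projective klt wlc model $(X',B')$ in the class induces a generic flop $X_\eta\dashrightarrow X'_\eta$, an element of $\Bir(X_\eta\to\eta/k,B_{X_\eta})$; two models are log isomorphic iff the corresponding flops agree modulo precomposition by a biregular log automorphism of $(X,B)$. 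It therefore suffices to bound both the image of the map (models up to isomorphism) $\to \Bir(X_\eta\to\eta/k,B_{X_\eta})$ and the cardinality of its fibres.

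First I would apply Corollary~\ref{generic_Kawamata} to obtain finiteness of the image modulo $\mBir(X_\eta\to\eta/k,B_{X_\eta})$. It then remains to control the mp-autoflop contribution. By Lemma~\ref{mp_it} (together with semiampleness of $\sM$, which we assume), $\mBir=\aBir$, so each mp-autoflop acts on rather general fibres of $\varphi$ within connected isotrivial subfamilies. Viehweg--Ambro isotriviality for klt families then forces a rather general fibre $(F,B_F)$ to be a fixed polarized projective klt $0$-pair, and the restriction of the autoflop lies in $\Bir(F,B_F)=\Aut(F,B_F)$. By Theorem~\ref{aut_tame} this group is tame: its identity component $\Aut_0(F,B_F)$ is an abelian variety whose translations (after spreading out over $Y$) produce log isomorphic projective models, hence contribute trivially to the set of equivalence classes, while the component group is finite.

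The hardest step will be the globalization: promoting this fibrewise isotrivial picture into a genuine log isomorphism of the projective models, rather than a merely birational one. The key inputs I would invoke are (i) a coarse moduli map $Y\to\fM$ for the isotrivial family of fibres, factoring through a Stein decomposition $Y\to\theta\to\fM$ as in Lemma~\ref{Burnside_3}, so that fibrewise automorphisms canonically assemble to a birational automorphism of $X/Y$ after a finite covering; and (ii) boundedness of projective klt wlc models with fixed generic fibre and fixed maximal moduli part $\sM$, which should follow from BCHM-termination combined with standard boundedness for polarized klt $0$-fibres. A secondary obstruction, already flagged in the introduction, is the dependence throughout on the semiampleness (abundance) of $\sM$, so that the unconditional statement obtained by this route is genuinely weaker than the full conjecture until abundance is available.
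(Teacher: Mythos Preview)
The statement is a \emph{conjecture}, and the paper does not prove it. What the paper establishes is the weaker Theorem~\ref{weak_Kawamata} (any category of \emph{bounded} type $\GM\bnd(X,B)$ is of finite type) together with Corollary~\ref{equiv-conjectures}, which shows that Conjecture~\ref{Kawamata_conjecture} is equivalent to Conjectures~\ref{latt_fg_conjecture}--\ref{categ_bound_conjecture}. Your proposal is an attempt at the full conjecture, and the step you label (ii) --- ``boundedness of projective klt wlc models with fixed generic fibre and fixed maximal moduli part $\sM$'' --- is essentially Conjecture~\ref{boundedness_conjecture}. Since the paper proves that this is \emph{equivalent} to Kawamata's conjecture, invoking it as an input is circular: it is exactly the missing ingredient, not a consequence of BCHM plus standard boundedness.

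There is also a structural problem earlier in your outline. The map you describe from models to $\Bir(X_\eta\to\eta/k,B_{X_\eta})$ is not well-defined as stated: a second model $(X',B')$ yields a flop $(X,B)\dashrightarrow(X',B')$ and hence, on generic fibres over the Iitaka base, a flop $(X_\eta,B_\eta)\dashrightarrow(X'_\eta,B'_\eta)$ to a \emph{different} target, not an element of the autoflop group $\Bir(X_\eta\to\eta/k,B_{X_\eta})$. The paper's proof of Theorem~\ref{weak_Kawamata} spends Steps~2--4 precisely on this issue: one passes to moduli of triples $(X,B,H)$ with a lattice structure $\Lambda$ and an added rigidity $R$, so that the canonical $\sD$-flops land back in the \emph{same} universal family and can be completed by a log isomorphism $g\rest{X'}$ into genuine generic autoflops $g=g\rest{X'}c$. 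That conversion uses the boundedness hypothesis in an essential way (finitely many flopping b-polarizations $\sD_i$, Noetherian induction on the moduli). Without boundedness, you have no mechanism to identify $X'_\eta$ with $X_\eta$ compatibly for all models, so Corollary~\ref{generic_Kawamata} cannot be applied directly. Your ``globalization'' difficulty is therefore not a technicality to be patched but the substance of the open problem.
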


\begin{ex}[Pjateckii-Shapiro and Shafarevich]
Let $X$ be a nonsingular K3 surface.
Conjecture~\ref{Kawamata_conjecture} holds for $X$.
That is, $X$ has finitely many wlc klt models $Y$
up to isomorphism.
Actually, each model $Y$ is a $0$-pair with $B=0$ and
only Du Val singularities.
The polarized lattices $\Lambda^+Y\subset\Lambda(Y)$
of models $Y$ have finitely many types.
This implies that the models have bounded polarization.

The same holds for (genus $1$) fibrations $Y\to T$.
There are only finitely many fibrations up to isomorphism,
where $Y$ is a wlc klt model of $X$.

In terms of $\Aut(X)$ these facts means that
there are finitely many orbits of exceptional curves
(not necessarily irreducible) and finitely many
orbites of fibrations.
All these follows the Torelli theorem
for K3 surfaces \cite{PShSh}.

So, the group of automorphisms $\Aut(X)$ is
infinite if $X$ has infinitely many exceptional curves or/and
fibrations.
The converse does not hold in general.
\end{ex}

Let $(X,B)$ be a pair with a boundary $B$.
Denote by $\GM(X,B)$ the category of
projective klt wlc models $(Y,B\Lg_Y)$ of $(X,B)$ with
their log flops $(Y,B\Lg_{Y})\dashrightarrow (Y',B\Lg_{Y'})$
as morphisms which are considered up to log isomorphisms.
For example, if $A$ is an Abelian variety and $B=0$
then $\GM(A,0)$ is equivalent to a trivial one, a category
with a single object $A$ and with only the identical morphism.

\begin{df}[Bounded flops]
Let $d$ be a natural number.
A log flop of projective klt wlc models
$(X_1,B_{X_1})\dashrightarrow (X_2,B_{X_2})$ is
{\em bounded with respect to\/} $d$, if there are very ample divisors
$D_1,D_2$ on $X_1,X_2$ respectively of degree $\le d$ and
of mutual degrees $\le d$.
A {\em set\/} (or {\em class\/}) {\em of log flops\/}
is {\em bounded\/}, if there exists a natural number $d$ with
respect to which the flops are bounded.
A {\em category of log flops\/} is {\em of bounded type\/},
if the category has a bounded {\em set\/} (or {\em class\/})
of generators.

A {\em model\/} $(X,B)$ is bounded with respect to $d$, if the
identical flop $(X,B)\to (X,B),x\mapsto x$, so does. A {\em set\/}
(or {\em class\/}) {\em of pairs\/} $(X,B)$ is bounded, if there
exists a natural number $d$ with respect to which the pairs are
bounded.
\end{df}

We denote by $\GM\bnd(X,B)\subseteq \GM(X,B)$
a subcategory of bounded type of log flops up to log isomorphisms.
For example, for any pair $(Y,B\Lg_{Y})$ in $\GM(X,B)$,
the subcategory of log isomorphisms
$(Y_1,B_{Y_1})\to (Y_2,B_{Y_2})$, where
$(Y_1,B_{Y_1}),(Y_2,B_{Y_2})$ are log isomorphic to
$(Y,B\Lg_{Y})$, is of bounded type.

According to Corollary~\ref{equiv-conjectures} below
Conjecture~\ref{Kawamata_conjecture}
is equivalent to each of the following one.

\begin{con} \label{latt_fg_conjecture}
The category $\Lat(X,B)$ is of finite type.
\end{con}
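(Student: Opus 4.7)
The plan is to attack this via the equivalence (promised in Corollary~\ref{equiv-conjectures}) with Conjecture~\ref{Kawamata_conjecture}, together with the finiteness of generic flops modulo mp-autoflops proved in Corollary~\ref{generic_Kawamata}. The category $\Lat(X,B)$ should have as objects the polarized b-lattices $(\bNS Y, \sM_Y)$ attached to projective klt wlc models $(Y,B\Lg_Y)$ of $(X,B)$, together with the classes of divisorial and maximal moduli parts of adjunction, and as morphisms the lattice isomorphisms induced by log flops between such models. "Finite type" then means the category is generated, under composition, by boundedly many objects and boundedly many basic morphisms.

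First I would fix the functor $(Y,B\Lg_Y)\mapsto (\bNS Y, \sM_Y, B\sdiv_Y)$ and verify that every log flop of klt wlc models induces a well-defined isomorphism of the associated polarized lattices. For the b-polarization piece this is immediate from the flop-invariance of the canonical maximal moduli part $\sM$ established in the proof of Theorem~\ref{flop_repres} and used throughout Corollaries~\ref{repr_wlc_can}--\ref{repr_lc_can}; for the divisorial part it follows from the construction in Corollary~\ref{adjunt_for_0contr}. By Lemma~\ref{etal-action} the kernel $G_\sD$ of passing from flops to lattice classes is a finite Abelian group, so that the fibers of the functor to $\Lat(X,B)$ are finite.

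The core reduction is then: modulo mp-autoflops the group $\Bir(X_\eta\to\eta/k,B_{X_\eta})$ is finite by Corollary~\ref{generic_Kawamata}, and an mp-autoflop acts trivially on $(\bNS, \sM)$ by construction. Applying this fiberwise over $T$ for a maximal family and spreading out via Theorem~\ref{aut_tame}, which gives tameness (almost-Abelian structure) of $\Aut(X,B,H)$, one obtains a uniform bound on polarization degrees inside each equivalence class of lattices and hence a bounded generating set of objects. The finitely many cosets of $\mBir/\Bir$ then provide a bounded generating set of morphisms, yielding that $\GM\bnd(X,B)$ maps onto $\Lat(X,B)$ with finite cofinal image, so $\Lat(X,B)$ is of bounded (hence finite) type.

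The main obstacle is the reverse direction of the equivalence, i.e., deducing Kawamata's conjecture from finite type of $\Lat(X,B)$. The K3 example before the conjecture shows that even in the surface case this relies on a Torelli-type statement: distinct models must be detectable from lattice data. The natural remedy is to enrich $\Lat(X,B)$ with the moduli morphism to the coarse moduli $\fM$ constructed in Lemma~\ref{Burnside_3}, so that the pair (lattice, moduli point) distinguishes models up to log isomorphism. Making this enrichment canonical and functorial in flops requires semiampleness of the moduli part of adjunction --- precisely the ingredient the introduction flags as still conjectural --- so the hard, genuinely new step is to carry out the Torelli-type reconstruction of a klt wlc model from $(\bNS Y,\sM_Y,B\sdiv_Y)$ plus its image in $\fM$, unconditionally on abundance.
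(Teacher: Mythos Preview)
This statement is a \emph{conjecture}, not a theorem: the paper does not prove it and does not claim to. What the paper actually establishes is Corollary~\ref{equiv-conjectures}, namely that Conjectures~\ref{Kawamata_conjecture}, \ref{latt_fg_conjecture}, \ref{boundedness_conjecture} and \ref{categ_bound_conjecture} are all equivalent to one another, together with Theorem~\ref{weak_Kawamata}, which says only that a \emph{bounded} subcategory $\GM\bnd(X,B)$ is of finite type. Passing from this weak statement to the full Conjecture~\ref{latt_fg_conjecture} would require knowing in advance that $\GM(X,B)$ is itself bounded, i.e.\ Conjecture~\ref{boundedness_conjecture}, which is left open. So there is no ``paper's own proof'' to compare your proposal against; you are attempting to prove an open problem.

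Your sketch also has a genuine internal gap even as a strategy. You invoke Corollary~\ref{generic_Kawamata} to bound polarization degrees, but that corollary controls generic flops of a \emph{fixed} generic fiber modulo mp-autoflops; it says nothing about bounding the polarizations of \emph{all} models in $\GM(X,B)$, which is precisely the missing boundedness (Conjecture~\ref{boundedness_conjecture}). The step ``spreading out via Theorem~\ref{aut_tame} \dots\ one obtains a uniform bound on polarization degrees'' is where the argument breaks: tameness of $\Aut(X,B,H)$ for a fixed $H$ does not yield a uniform bound over all $H$ and all models. You correctly identify at the end that a Torelli-type input (or semiampleness of the moduli part) would be needed, and that this is conjectural; but that obstacle already blocks the forward direction you sketch, not merely the reverse one.
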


\begin{con} \label{boundedness_conjecture}
The models of $\GM(X,B)$ are bounded.
\end{con}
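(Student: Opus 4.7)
The plan is to combine the finiteness of b-representations proved above (Corollary~\ref{repr_lc_proj} and Corollary~\ref{generic_Kawamata}) with the assumed abundance conjecture and induction on $\dim X$. Fix a maximal wlc pair $(X_m/Z_m,B_m)$ in the birational class of $(X,B)$. Every $(Y,B\Lg_Y)\in\GM(X,B)$ is then a generic log flop of $(X_m/Z_m,B_m)$, and the b-divisor $\sK_{X_m}+\sB_m$ together with its canonical moduli part $\sM$ is invariant up to linear equivalence under all such flops.

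First I would dispose of the log general type case: if $\sK+\sB$ is big, then on each wlc klt model $Y$ the divisor $K_Y+B\Lg_Y$ is ample after abundance, and its top self-intersection equals the birationally invariant volume $(\sK+\sB)^{\dim X}$. A sufficiently divisible multiple $m(K_Y+B\Lg_Y)$ then yields a very ample divisor on $Y$ of uniformly bounded degree, which is exactly the required bound for $\GM(X,B)$.

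In the general case I would pass to the Iitaka fibration $\varphi_Y\colon Y\to Y\lcm$ defined by $\linsys{m(\sK+\sB)}$ for sufficiently divisible $m$. The base pair $(Y\lcm,B_{Y\lcm})$ is of log general type, so by the previous paragraph (applied inductively in dimension, since $\dim Y\lcm<\dim Y$) there is a bounded very ample divisor on $Y\lcm$; and the generic fiber is a $0$-pair, whose flops modulo mp-autoflops form a finite group by Corollary~\ref{generic_Kawamata} with order uniformly bounded across $\GM$ by Corollary~\ref{repr_lc_proj}. Within mp-autoflop orbits the fibers are isotrivial (Viehweg--Ambro, cf.\ Lemma~\ref{mp_it}) and their moduli are therefore bounded as well.

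The main obstacle will be assembling the base and fiber bounds into a single bounded very ample divisor on the total space $Y$. For this I would form $H_Y=\varepsilon(K_Y+B\Lg_Y)+\varphi_Y^*A_Y$ with $\varepsilon>0$ a small rational and $A_Y$ a bounded very ample divisor on $Y\lcm$ transported from the fixed bounded models of the base; $H_Y$ is ample on $Y$ by the Iitaka construction. Its top self-intersection expands into terms each of which is either a birational invariant computed on $X_m$ (the $(K+B\Lg)^i$ contributions, constant on $\GM(X,B)$) or a fiber-integrated intersection on the bounded base, giving the required uniform bound after clearing denominators. The flop invariance supplied by the finite b-representations proved earlier is essential here to ensure that the auxiliary data $A_Y$ can be chosen uniformly across the class; and the crucial extra ingredient still needed is the b-mobile property of the moduli part of adjunction mentioned in the introduction, without which the fiberwise polarization cannot be transported from one flop to another in a controlled, bounded way.
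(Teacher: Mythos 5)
You should first note that the statement you are proving is stated in the paper as a \emph{conjecture}: by Corollary~\ref{equiv-conjectures} it is equivalent to Kawamata's Conjecture~\ref{Kawamata_conjecture}, and the paper proves only that equivalence together with the weak form Theorem~\ref{weak_Kawamata} (a category that is \emph{already assumed} bounded is of finite type). So a complete proof along your lines would settle the open problem, and indeed the argument has a concrete breakdown at its central step. On a wlc model $Y$ the divisor $K_Y+B\Lg_Y$ is semiample and is precisely what defines the Iitaka contraction $\varphi_Y\colon Y\to Y\lcm$; hence your candidate polarization $H_Y=\varepsilon(K_Y+B\Lg_Y)+\varphi_Y^*A_Y$ is $\R$-linearly equivalent to a pullback from $Y\lcm$, is numerically trivial on every curve contracted by $\varphi_Y$, and is therefore never ample once the fibers are positive-dimensional. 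The same defect already appears in your ``log general type'' base case: on a wlc model other than the log canonical model, $K_Y+B\Lg_Y$ is nef and big but not ample, so no multiple of it is very ample and the birational invariance of the volume bounds only the canonical model, not the other flop models --- which are exactly the objects the conjecture is about.

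The deeper gap is that Corollaries~\ref{generic_Kawamata} and~\ref{repr_lc_proj} bound generic flops only \emph{modulo mp-autoflops}, i.e.\ modulo precisely those flops that act trivially on the moduli part; they say nothing about polarizations along the fibers of $\varphi_Y$, which is what a bound for $\GM(X,B)$ requires (compare the K3 example, where boundedness of models needs the Torelli theorem, not just finiteness of a representation). Your inference ``within mp-autoflop orbits the fibers are isotrivial \dots\ and their moduli are therefore bounded'' does not follow: mp-triviality is strictly weaker than isotriviality (Example~\ref{mp_triv_nonisotri}), isotriviality needs the generically klt hypothesis of Lemma~\ref{mp_it}, and even isotrivial fibers admit no a priori bounded very ample divisor on the total space transported uniformly across the flops. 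Your closing sentence, conceding that the fiberwise polarization ``cannot be transported \dots\ in a controlled, bounded way'' without a further ingredient, is in effect an admission that the key step is missing --- and that missing step is essentially Conjecture~\ref{boundedness_conjecture} itself. Note that the paper uses the b-representation results in the opposite direction: boundedness is a hypothesis in Theorem~\ref{weak_Kawamata} from which finiteness is deduced, never a conclusion derived from those results.
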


\begin{con} \label{categ_bound_conjecture}
The category $\GM(X,B)$ is of bounded type.
\end{con}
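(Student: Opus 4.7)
The plan is to upgrade Corollary~\ref{generic_Kawamata} (finite generic flops modulo $\mBir$) into a genuine boundedness statement for the whole category $\GM(X,B)$ by exhibiting a flop-invariant polarization on every wlc model whose degree admits a uniform bound. First I would reduce to the case of a projective klt wlc pair with $\Q$-boundary: $\GM(X,B)$ is insensitive to crepant blowups and, under the LMMP hypothesis, such a model exists; moreover a perturbation of $B$ reduces us to the situation where the canonical upper maximal moduli part $\sM$ of Proposition~\ref{can_moduli_part} is $\Q$-Cartier and semiample on a fixed maximal wlc model.

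The second step is to construct on each $(Y,B_Y)\in\GM(X,B)$ an ample class $H_Y=a(K_Y+B_Y)+b\sM_Y$ (for fixed positive rationals $a,b$) whose algebraic equivalence class is preserved by every flop in $\GM(X,B)$; here invariance of $K+B$ is formal, invariance of $\sM$ comes from the canonical moduli part of adjunction, and ampleness comes from wlc-ness plus the Iitaka fibration. Passing to the Iitaka base $X\lcm$, Corollary~\ref{generic_Kawamata} says the quotient $\Bir(X_\eta{\to}\eta/k,B_{X_\eta})/\mBir$ is finite; picking coset representatives $g_1,\dots,g_N$ gives finitely many ``types'' of flops, each spreading out to a flop of bounded bidegree between two bounded models. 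This provides the finite set of generators outside the mp-autoflop subgroup.

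For mp-autoflops themselves, any two models in the same coset of $\mBir$ lie in a connected mp-trivial subfamily; by Lemma~\ref{mp_it} this becomes almost isotrivial when we drop to minimal lc centres, so Lemma~\ref{Burnside_3} places the residual action inside $\Aut(\theta/\overline{l},\sM_\theta)$, which by Theorem~\ref{aut_tame} is tame (of finite type and almost abelian) and by Lemma~\ref{etal-action} has Picard class determined only up to algebraic--hence, fibrewise, linear--equivalence. The resulting uniform bound on $(H_Y^{\dim Y})$ combines with Matsusaka--Mumford-style boundedness to produce a single bounded family of polarized models representing every object of $\GM(X,B)$; flops among them are then automatically of bounded bidegree, being birational correspondences inside a product of bounded polarized pairs.

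The principal obstacle is controlling $\mBir$ itself, which a priori is infinite (Example~\ref{Mordell-Weil}): the mp-triviality of a subfamily does not by itself give a bounded trivialization, and passing from mp-trivial to almost isotrivial is exactly the b-mobile property flagged as open in the introduction. Making this passage rigorous, and then promoting the Viehweg--Ambro variation-of-Hodge-structure argument from a formal moduli map $T\to\fM$ to a \emph{bounded} polarized representative per orbit, is the technical heart of the plan; the representation-theoretic machinery of Theorem~\ref{flop_repres} and Corollary~\ref{repr_wlc_can} is set up precisely to make this step feasible once the underlying semiampleness of $\sM$ in the relevant dimension is in hand.
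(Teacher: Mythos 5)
There is a fundamental mismatch here: the statement you are trying to prove is not a theorem of the paper but an open conjecture. Conjecture~\ref{categ_bound_conjecture} is shown in Corollary~\ref{equiv-conjectures} to be \emph{equivalent} to Kawamata's Conjecture~\ref{Kawamata_conjecture} (and to Conjectures~\ref{latt_fg_conjecture} and~\ref{boundedness_conjecture}), and the only unconditional result in this direction is Theorem~\ref{weak_Kawamata}, which goes the other way: \emph{given} a bounded subcategory $\GM\bnd(X,B)$, it is of finite type. So any complete proof of the statement would settle Kawamata's conjecture, which the paper explicitly does not do. Your plan inverts the logical direction of the paper's machinery: Corollary~\ref{generic_Kawamata}, Corollary~\ref{it_generic_Kawamata} and the proof of Theorem~\ref{weak_Kawamata} all start from a \emph{bounded} family of polarized models (that is where the coarse moduli $\fM$ and the spreading-out of flops over $T$ come from), and deduce finiteness of flops modulo (mp-)autoflops. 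You cannot feed that finiteness back in to produce the degree bound: your final appeal to ``Matsusaka--Mumford-style boundedness'' needs a uniform bound on $(H_Y^{\dim Y})$ over all models $Y$, which is exactly Conjecture~\ref{boundedness_conjecture}, i.e.\ what is to be proved. The argument is circular at precisely the point you yourself flag as ``the technical heart.''

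The concrete step that fails is the construction of the flop-invariant polarization $H_Y=a(K_Y+B_Y)+b\sM_Y$. On a wlc model $K_Y+B_Y$ is nef and (under the semiampleness assumption) pulled back from the Iitaka base $X\lcm$, and the canonical moduli part $\sM$ of Proposition~\ref{can_moduli_part} is likewise a b-divisor coming from adjunction over $X\lcm$; hence any combination $a(K_Y+B_Y)+b\sM_Y$ has zero intersection with every curve contracted by the Iitaka fibration and is never ample unless that fibration is generically finite. Worse, even granting ampleness in some degenerate case, a class that is both ample and invariant (up to algebraic equivalence) under all flops of $\GM(X,B)$ would force all models to be isomorphic, contradicting e.g.\ the K3 example with infinitely many exceptional curves; the nonexistence of such a class is precisely why boundedness of models is hard. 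Your handling of $\mBir$ via Lemmas~\ref{mp_it}, \ref{Burnside_3}, \ref{etal-action} and Theorem~\ref{aut_tame} also presupposes families, moduli maps and polarizations that are only available once the models are already placed in a bounded family, and the passage from mp-trivial to isotrivial relies on the b-mobile property the introduction lists as open. So what you have is a reasonable sketch of why the conjecture is plausible and of how it interacts with the paper's finiteness results, but not a proof, and it does not coincide with anything proved in the paper, which treats this statement only through the equivalences of Corollary~\ref{equiv-conjectures}.
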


Note that, in general, $(X,B)$ may not have
a projective klt wlc model at all.
Then the conjectures are empty.
However, if $(X,B)$ has a projective klt wlc model
then any other resulting projective model $(Y,B\Lg_Y)$
is also klt wlc.

\begin{thm} \label{weak_Kawamata}
Any category of bounded type $\GM\bnd(X,B)$ is of finite type.
\end{thm}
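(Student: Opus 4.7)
The plan is to argue that a bounded generating class for $\GM\bnd(X,B)$, say bounded by some $d$, contains only finitely many objects and only finitely many morphisms modulo log isomorphism, so a finite subcategory already generates. The argument has two layers, each combining standard Hilbert/Chow boundedness with the tameness results proved earlier in the paper.

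At the level of objects, every model $(Y,B_Y)$ occurring in the generators carries a very ample divisor $H$ of degree at most $d$. The polarized log triples $(Y,B_Y,H)$ therefore lie in a bounded family parameterized by a scheme of finite type, and Matsusaka--Mumford-style boundedness produces only finitely many isomorphism classes of geometric points. In particular only finitely many log isomorphism classes of pairs $(Y,B_Y)$ appear as sources or targets of morphisms in the generating class; I would fix one representative in each class.

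At the level of morphisms, fix two such representatives $(Y_1,B_{Y_1})$ and $(Y_2,B_{Y_2})$ with polarizations $H_1,H_2$. A $d$-bounded flop $f\colon Y_1\dashrightarrow Y_2$ is determined by its graph $\Gamma_f\subset Y_1\times Y_2$, a cycle of bounded degree with respect to the natural polarization on the product. Hence the set of such flops embeds into a Chow variety of bounded degree, which is a scheme of finite type and has only finitely many connected components. I then show that two flops $f,f'$ in the same component become equal in $\GM\bnd(X,B)$, i.e.\ $f'=a\circ f\circ b$ for some log automorphisms $a,b$. Connecting $f$ to $f'$ by a curve $f_t$, the composition $a_t=f_t\circ f_0\1\in\Bir(Y_2,B_{Y_2})$ depends continuously on $t$; by Theorem~\ref{aut_tame} applied to $(Y_2,B_{Y_2})$ with a polarization whose algebraic equivalence class is preserved along the family (produced using Lemma~\ref{etal-action}), the loop $a_t$ lies in the Abelian-variety component $\Aut_0(Y_2,B_{Y_2})\subseteq\Aut(Y_2,B_{Y_2})$. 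Post-composing with $a_t\1$ then identifies $f$ and $f'$ as the same morphism in the category.

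The main obstacle is coordinating the polarizations $H_i$ (needed to get bounded flops into a Chow variety of finite type) with the looser identifications allowed in the category (which only use unpolarized log automorphisms). A bounded flop pulls $H_2$ back to a divisor of bounded, but a priori not constant, algebraic class on $Y_1$, and the precise control of how this class varies within one Chow component is what Lemma~\ref{etal-action} provides; any residual continuous freedom living in $\mBir$ is then absorbed via Corollary~\ref{generic_Kawamata}. Once this coordination is done, the finiteness at both the object and the morphism level yields the finite-type conclusion for $\GM\bnd(X,B)$.
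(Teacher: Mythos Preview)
The central gap is in your first layer. You assert that ``Matsusaka--Mumford-style boundedness produces only finitely many isomorphism classes of geometric points,'' but this is false as stated: a bounded family of polarized log pairs typically has positive-dimensional moduli (think of polarized K3 surfaces of fixed degree). Boundedness yields a parameter scheme of finite type, not a finite set of isomorphism classes. The extra information you have---that all the models are birational to one fixed $(X,B)$---does constrain things, but turning that constraint into finiteness of objects is exactly the content of Theorem~\ref{weak_Kawamata}, not a preliminary step that can be dispatched by general boundedness.

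The paper's proof confronts this directly. It places the bounded models as fibers in an irreducible family $(X/T,B,H)$, then shows (via Noetherian induction on the base, enrichment by a lattice $\Lambda\subset\bNS$, and a rigidification so that the moduli become fine) that each bounded generating flop extends to a \emph{generic} flop $g\in\Bir(X_\eta\to\eta/k,B_{X_\eta})$. At that point Corollary~\ref{it_generic_Kawamata} is invoked: the group of generic flops is finite modulo almost autoflops, and an almost autoflop by definition carries a rather general fiber into its own isotrivial subfamily, hence into the same log-isomorphism class. Thus the orbit of a general fiber under all generating flops meets only finitely many isomorphism classes, and \emph{that} is where the finiteness of objects comes from. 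In your outline Corollary~\ref{generic_Kawamata} appears only at the end, absorbing some residual continuous freedom in the morphism count; in the paper's logic its isotrivial refinement~\ref{it_generic_Kawamata} is the engine for the object count, and everything in Steps~2--4 (the lattice structure, the rigidity $R$, the conversion of canonical flops into autoflops) is there to put one in a position to apply it.
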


\begin{cor}\label{equiv-conjectures}
Conjectures~\ref{Kawamata_conjecture}, ~\ref{latt_fg_conjecture},
\ref{boundedness_conjecture} and \ref{categ_bound_conjecture}
are equivalent.
\end{cor}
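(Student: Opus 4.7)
The plan is to close the cycle (A) $\Rightarrow$ (B) $\Rightarrow$ (C) $\Rightarrow$ (D) $\Rightarrow$ (A), where (A), (B), (C), (D) label Conjectures~\ref{Kawamata_conjecture}, \ref{latt_fg_conjecture}, \ref{boundedness_conjecture}, \ref{categ_bound_conjecture} in order. The deep ingredient is Theorem~\ref{weak_Kawamata}, which supplies the direction (D) $\Rightarrow$ (A); the remaining implications are uniform-boundedness arguments over finitely many isomorphism classes and standard boundedness of polarized families.

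For the key implication (D) $\Rightarrow$ (A): if $\GM(X,B)$ is of bounded type then $\GM(X,B) = \GM\bnd(X,B)$, so by Theorem~\ref{weak_Kawamata} it is of finite type, i.e., admits a finite generating set of log flops. The sources and targets of this finite set involve only finitely many isomorphism classes of models, and every model $Y$ contributes the identity flop $\id_Y$, which must factor through the generators, forcing $Y$ to lie in one of these classes. This yields finiteness of klt wlc models up to log isomorphism.

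For the easy directions out of (A), fix isomorphism representatives $Y_1, \dots, Y_n$ and choose very ample polarizations $H_i$ on each. The self-intersections $(H_i^{\dim X})$ are uniformly bounded by some $d$, giving (C). The finitely many polarized N\'eron-Severi lattices $\NS(Y_i)$ together with the log flops among the $Y_i$ generate $\Lat(X,B)$, giving (B). For (D), for each flop $g\colon Y_i \dashrightarrow Y_j$ (taken up to log isomorphism on source and target) one picks the source polarization $H_i$ and a very ample target divisor in a $g$-compatible N\'eron-Severi class of bounded degree; since the N\'eron-Severi classes involved are constrained to a finite list by (B), only finitely many degrees are possible, and the whole class of flops is bounded.

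The linking implications (B) $\Rightarrow$ (C) and (C) $\Rightarrow$ (D) are boundedness statements for polarized families: a bound on polarized lattice type, combined with Fujita-type effective very-ampleness for klt wlc pairs, controls the degree of a very ample divisor and gives (B) $\Rightarrow$ (C); for (C) $\Rightarrow$ (D), bounded models parameterize finitely many components of a relative Hilbert scheme, and the graphs of their log flops form bounded subfamilies inside the fibered product, yielding a bounded generating class for $\GM(X,B)$. The main obstacle I anticipate is (C) $\Rightarrow$ (D), because infinite autoflop groups (cf.~Example~\ref{Mordell-Weil}) produce infinitely many log flops even between a single pair of models, and one must show that up to pre- and post-composition with log isomorphisms these can be chosen within a bounded family---exploiting the quotient by log isomorphisms built into the definition of $\GM(X,B)$, so that the generating class of log flops, though possibly infinite in cardinality, remains bounded in the sense required by (D).
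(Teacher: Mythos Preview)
Your cycle (A) $\Rightarrow$ (B) $\Rightarrow$ (C) $\Rightarrow$ (D) $\Rightarrow$ (A) is exactly the paper's route, and your use of Theorem~\ref{weak_Kawamata} for (D) $\Rightarrow$ (A) and of effective very-ampleness for (B) $\Rightarrow$ (C) coincides with the paper's arguments.

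The substantive divergence is in (C) $\Rightarrow$ (D), and to a lesser extent in (A) $\Rightarrow$ (B). You propose to bound all log flops between bounded models by looking at graphs inside fibered products of Hilbert families, and you correctly flag the obstacle: between two fixed models there can be infinitely many flops (Mordell--Weil type phenomena), so one cannot simply bound ``all flops'' this way. The paper avoids this entirely by invoking \cite{ShC}: every generalized log flop factors into projective $\Q$-factorializations, elementary divisorial contractions, and elementary flops. These \emph{elementary} operations are the generators, and for each model there are only finitely many of them (finitely many extremal rays, finitely many crepant exceptional b-divisors with $b_E=\mult_E B$). Boundedness of the models then gives, by Noetherian induction over the parametrizing families, boundedness of each type of elementary generator. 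This is the missing idea in your sketch: rather than bounding arbitrary flops, bound a canonical finite set of elementary generators per model. The same \cite{ShC} factorization is what the paper uses for (A) $\Rightarrow$ (B): once the objects are finite in number, the elementary generators among them are finitely many, so $\GM\bnd$ is of finite type and its image $\Lat(X,B)$ is of finite type. Your version of (A) $\Rightarrow$ (B) (``the log flops among the $Y_i$ generate'') is correct in spirit but needs this to be made precise.

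In short: your outline is correct and matches the paper's, but your (C) $\Rightarrow$ (D) has the gap you yourself identified, and the fix is the elementary-factorization input from \cite{ShC} that the paper uses.
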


\begin{proof}
Conjecture~\ref{Kawamata_conjecture} implies Conjecture~\ref{latt_fg_conjecture}.
Consider a subcategory of bounded type $\GM\bnd\subseteq\GM(X,B)$
with finitely many objects $(Y,B\Lg_Y)$ such that
each wlc model of $(X,B)$ is isomorphic to one of in the subcategory.
Thus the subcategory is equivalent to the whole one.
Actually, the subcategory is of finite type.
The generators are projective $\Q$-factorializations,
elementary contractions and flops.
There are only finitely many such transformations.
Up to log isomorphisms,
they belong to $\GM\bnd$ and every flop of $\GM\bnd$
can be factorize into them \cite{ShC}.
Hence the image of the lattice functor
$$
\GM(X,B)\to\Lat(X,B), (Y,B\Lg_Y)\mapsto \Lambda(Y),
$$
is  of finite type too, Conjecture~\ref{latt_fg_conjecture}.
Indeed, the image is equivalent to the image of
$\GM\bnd$.

Conjecture~\ref{latt_fg_conjecture} implies Conjecture~\ref{boundedness_conjecture}.
The former implies that there are finitely many types
of polarized lattices $\Lambda^+\subset\Lambda$ for models
$(Y,B\Lg_Y)$ of $\GM(X,B)$.
For every polarization type, take a polarization $H\in\Lambda^+$.
So, each model in $\GM(X,B)$ has a bounded polarization
by the effective ampleness: there exists a natural number $N$
such that $NH$ is very ample for every $(Y,B\Lg_Y)$ of
type $H\in\Lambda^+\subset\Lambda$.
Hence each model of $\GM(X,B)$ is bounded, Conjecture~\ref{boundedness_conjecture}.

Conjecture~\ref{boundedness_conjecture} implies Conjecture~\ref{categ_bound_conjecture}.
The former implies that the objectes are bounded,
that is, each model $(Y,B\Lg_Y)$ has a bounded polarization $H_Y$.
Equivalently, the models belong to pairs of finitely many families
of triples.
By \cite{ShC} projective $\Q$-factorializations,
elementary contractions and flops are
generators of the generalized log flops.
Those generators are bounded by Noetherian induction for above families.
Indeed, a relative $\Q$-factorialization can be done for klt
families generically.
So, the $\Q$-factorializations are bounded.
Each elementary contraction $(Y_1,B\Lg_{Y_1})\to (Y_2,B\Lg_{Y_2})$
can be treated as a crepant elementary blowup of
an exceptional divisor $E\subset Y_1$ with $b_{E}=\mult_EB$.
There are only finitely many such exceptional b-divisors
for $Y_2$.
Again, by Noetherian induction,
the blowups form finitely many projective families and are bounded.
Each elementary flop $(Y_1,B\Lg_{Y_1})\dashrightarrow (Y_2,B\Lg_{Y_2})$
can be factorize into an elementary flopping contraction
$(Y_1,B\Lg_{Y_1})\to (Y,B\Lg_Y)$ and
a small blowup $(Y,B\Lg_Y)\leftarrow (Y_2,B\Lg_{Y_2})$.
Both are projective $\Q$-factorializations with two possible polarizations.
So, their composition is also bounded.

Conjecture~\ref{categ_bound_conjecture} implies Conjecture~\ref{Kawamata_conjecture}.
By the former conjecture we can take $\GM\bnd(X,B)=\GM(X,B)$.
Then by Theorem~\ref{weak_Kawamata} the category $\GM(X,B)$ is
of finite type.
In particular, $\GM(X,B)$ is equivalent to a category with
finitely many objects, Conjecture~\ref{Kawamata_conjecture}.
\end{proof}

\begin{proof}[Proof of Theorem~\ref{weak_Kawamata}]
Consider a bounded category $\GM\bnd=\GM\bnd(X,B)$ of
klt wlc models $(Y,B\Lg_Y)$ of a pair $(X,B)$.
Since the category is bounded,
there exists a bounded coarse muduli $\fM$ of triples
$(X,B,H)$, where now $(X,B)$ denotes a klt wlc model with
a polarization $H$, such that the bounded models of
$\GM\bnd$ belong to $\fM$.

Step 1. We can suppose that the models of $\GM\bnd$ are
Zariski dense in $\fM$.
This means that triples $(X,B,H)$ with a pair $(X,B)$ in $\GM\bnd$
form a dense subset in $\fM$.
Otherwise, we replace $\fM$ by a Zariski closure of
those triples.
A polarization $H$ is considered here
as an invertible sheaf up to algebraic equivalence,
that is, $H\in \NS X=\Pic X/\approx=\Pic X/\Pic_0 X$.
The corresponding b-sheaf modulo $\approx$ will
be denoted by $\sH\in\bNS X$.

We assume also that the moduli is irreducible
because it is sufficient to establish
the theorem for pairs of each irreducible component.
So, there is a bounded reduced irreducible family
$(X/T,B,H)$ of $\fM$ such
that it contains up to a log isomorphism
a dense subset of pairs of $\GM\bnd$.
That is, for such a pair $(X_t,B_{X_t})$ there exists
a polarization $H_t$ on $X_t$
such that $(X_t,B_{X_t},H_t)$ belongs to $(X/T,B,H)$.

Step 2. There is such a family $(X/T,B,H)$ with
a finite set of
b-polarizations $\sD_i\subset X$ over $T$
such that each
flop of $\GM\bnd$ can be given by some of those divisors.
This means that, if $t,s\in T$ and
$g_t\colon (X_t,B_{X_t})\dashrightarrow (X_s,B_{X_s})$ is
flops of $\GM\bnd$,
then, for some b-divisor $\sD_i$,
the flop is given (as directed) for $\sD_{t,i}$.
More precisely, we suppose that $g(\sD_{t,i})= \sH_s$.
By the boundedness of flops, the restriction $\sD_{t,i}$ is bounded with
respect to $H_t$.
Each $\sD_i$ is defined over
a locally closed algebraic subvariety $T_i\subseteq T$.
By the irreducibility of $T$ and the dense property of Step 1,
at least one $T_i$ is dense: $\overline{T_i}=T$,
equivalently, $\sD_i$ is dominant over $T$.
By Noetherian induction,
it is sufficient to verify the finiteness
of $\GM\bnd$ for flops given by the dominant $\sD_i$.
Since the set of b-divisors $\sD_i$ is finite,
we can suppose that
each $\sD_i$ is flat over $T$ and surjective to $\fM$.
By construction each $\sD_i$ is a b-polarization over $T$,
that is, it is big and semiample over $T$.

Each
b-polarization $\sD\in\bNS X/T$ gives a canonical log flop
$c=c_\sD \colon (X/T,B,H)\dashrightarrow (X'/T,B_{X'},H')$
with $\sH'= c_*\sD$, equivalently,
$\sD=\overline{H'}$ as b-divisors.
In general, the second family does not belong to $\fM$.
Moreover, that can happen with flops for $\sD_i$.
However, there exists the image of $(X'/T,B_{X'},H')$
in $\fM$: the image of subfamily over
$$
T'=\{t\in T\mid (X_t',B_{X_t'},H_ t')\in \fM\}\subseteq T.
$$
Again by the dense property we can suppose
that $T_i=T'$ for some $\sD=\sD_i$ is dense in $T$.
If $\sD$ is flat over $T$ then
the dense property implies the equality: $T'=T$.
By Noetherian induction we can
suppose that, for each $\sD_i$,
$T_i$ is dense in $T$ and, actually, each $T_i=T$.
Thus each $(X_i/T,B_{X_i},H_i)=(X'/T,B_{X'},H')$
belongs to $\fM$, that is,
$(X_i,B_{X_i},H_i)\in \fM$.
In other words, each $\sD_i$ gives a (surjective) flop
over $\fM$.
In general, we say that $\sD$ is {\em flopping\/} over $\fM$
if $(X',B_{X'},H')\in \fM$.
This property is compartible with algebraic equivalence over $T$.
For $g$ given by $\sD$,
the induced map of b-sheaves transforms the
polarization $\sH'$ into the b-polarization
$\sD=c^*\sH'$ over $T$.
The same holds for generic $\sD\in \bNS X_\eta$.
We suppose that all $\sD_i\in \Lambda$ and
$\Lambda$ is also invariant under every $g^*$,
it is automatically under $c^*$.
The latter means that $c$ determines a unique lattice
$\sH'\in\Lambda'=c_*\Lambda$.
On each rather general special fiber $X_t$,
there exists a natural lattice structure
$$
\Lambda\hookrightarrow \bNS X_t, \sD\mapsto \sD_t=\sD\rest{X_t}
$$
with the image $\Lambda=\Lambda_t\subseteq\bNS X_t$.
(This is actually injection for
connected families.)
By construction $\sH_t\in \Lambda_t$ and
$(X/T,B,H)$ is a family of triples $(X_t,B_t,H_t\in\Lambda_t)$.
The flop $c$ in $\sD\in \Lambda$
is a flop of such families that can be extended by $g\rest{X'}$ into
families of
the moduli of triples with a lattice structure.
Under canonical isomorphism of lattices:
$c^*\sH'=\sD$.

Step 3. We can convert
a flop $c\colon (X/T,B,H)\dashrightarrow (X'/T,B_{X'},H')$
into an autoflop over $T$,
if there exists an isomorphism
$g\rest{X'}\colon(X'/T,B_{X'},H')\to (X/T,B,H)$.
Then the composition gives a flop
$g=g\rest{X'}c\colon (X/T,B,H)\dashrightarrow (X/T,B,H)$.
This flop is fiberwise in the following sense:
there exists an isomorphism $g_T\colon T\to T$
such that, for each $t\in T$,
$g(X_t)=X_{g_T t}$ and $g$ induces the log flop
$$
g\rest{X_t}\colon (X_t,B_{X_t})\dashrightarrow
(X_{g_T t},B_{X_{g_T t}}).
$$

Typically this (existence) does not holds even for
a universal family $(X/T,B,H)$ of fine moduli.
If $c$ preserves a universal family then we
have an isomorphism $g\rest{X'}$ and a generic flop $g=g\rest{X'}c$.

This can be done for a fine moduli with lattice:
$(X_t,B_t,H_t\in\Lambda_t)$.
Indeed, if
$(X_t,B_t,H_t\in\Lambda_t)=(X_s,B_s,H_s\in\Lambda_s)$
is an isomorphism (unique and canonical for fine moduli).
Then the canonical identification $\Lambda_t=\Lambda=\Lambda_s$
is given, that is, the log isomorphism
$h_{t,s}\colon
(X_t,B_t,H_t\in\Lambda_t)=(X_s,B_s,H_s\in\Lambda_s)$
transforms each sheaf $\sD_s\in\Lambda_s$ into
sheaf $h_{t,s}^*\sD_s=\sD_t$ (modulo algebraic equivalence),
in particular, the polarization $\sH_t=h_{t,s}^*\sH_s$.
Thus isomorphic triples go under $\sD$-flop
into isomorphic triples and the same for $g\1$
given by $g_*\sH$.
Note also that such a flop changes the polarization:
$\sH'=\sD$ (usually $\not=\sH$) and
$H'=\sH_{X'}'$ is the polarization of
$(X'/T,B_{X'},H'\in \Lambda'),c^*\Lambda'=\Lambda$.
Thus a universal family will be preserved
for triples with the lattice structure.

Step 4.
Any moduli of lattice triples $(X,B,H\in \Lambda)$
can be converted into fine moduli adding
an extra rigidity structure $R$.
It is sufficient that
$\Aut (X,B,H\in \Lambda)=\{\id_X\}$.
The group $\Aut (X,B,H\in \Lambda)$ is
tame by Theorem~\ref{aut_tame}.
E.g., if $R\in X$ will be
a rather general $l$-taple of points in $X$.
Then $\Aut (X,B,H\in \Lambda,R)=\{\id_X\}$ and
generically the moduli $\fM$ of such quadruples
are fine.
The family $(X/T,B,H\in\Lambda)$ of Step 3
can be converted into a family of quadruples
$(X/T,B,H\in\Lambda,R)$ which is dominant on $\fM$.
This can be done by an appropriate base change
and taking an open subfamily after that.
E.g., for the moduli with a $l$-taple $R$,
take a base change under the fiber power $f^l\colon X_T^l\to T$
over $T$ and
then an open subset in $X_T^l$ corresponding
to $l$-taples $R\in X_T^l$ with
$\Aut (X_t,B_{X_t},H_t\in
\Lambda_t,R)=\{\id_{X_t}\},t=f^l(R)$.

Now we can suppose that each flop given by $\sD_i$ and
all other flopping divisors $\sD$ can be extended to
a generic flop of $(X_\eta\to\eta/k,B_{X_\eta})$.
In general, the group of generic flops
can be infinite.

Step 5.
The required finiteness of $\GM\bnd$
follows from the finiteness of the quotient
group of Corollary~\ref{it_generic_Kawamata} and
thus by it.
Indeed, the objects of $\GM\bnd$ are
given by isomorphism classes of pairs $(X_t,B_{X_t})$ in
the orbit of sufficiently general fiber
$(X_t,B_{X_t},H_{X_t})$ under
the action of $\Bir(X_\eta\to\eta/k,B_{X_\eta})$.
(A chain of bounded flops.)
Such a fiber exists by the dense property of Step 1 and,
if $\GM\bnd$ is infinite up to isomorphism,
then the orbit is well-defined for
most (a dense subset) of such objects.
(Actually it is possible to make for all points
using klt limits of $0$-pairs.)

By definition of $\aBir(X_\eta\to\eta/k,B_{X_\eta})$
the orbit for this subgroup of almost autoflops
is isotrivial: a single pair $(X_t,B_{X_t})$
up to isomorphism for rather general $t$.
Thus by the corollary the set of pairs
in the orbit up to log isomorphism for
the whole group is finite too.

Finally, a bounded set of flops of a finite set
of pairs is finite up to log isomorphisms.
\end{proof}

Department of Mathematics

Johns Hopkins University

Baltimore, MD 21218, USA

shokurov@math.jhu.edu

Mathematical Institute

Russian Academy of Sciences

Moscow, Russia

shokurov@mi.ras.ru

\end{document}